\theoremstyle{plain}
\newtheorem{thm}{Theorem}
\newtheorem{lem}[thm]{Lemma}
\newtheorem{cor}[thm]{Corollary}
\newtheorem{prop}[thm]{Proposition}
\newtheorem{axiom}[thm]{Axiom}
\theoremstyle{definition}
\newtheorem{defn}[thm]{Definition}
\DeclareMathOperator{\Coin}{Coin}
\DeclareMathOperator{\sign}{sign}
\DeclareMathOperator{\ind}{ind}
\DeclareMathOperator{\id}{id}
\DeclareMathOperator{\RT}{RT}
\DeclareMathOperator{\Aut}{Aut}
\DeclareMathOperator{\dom}{dom}
\newcommand{\R}{\mathbb{R}}
\newcommand{\Z}{\mathbb{Z}}
\newcommand{\C}{\mathbb{C}}
\newcommand{\adm}{\mathcal C}
\newcommand{\admm}{\mathcal D}
\newcommand{\Reid}{\mathcal R}
\newcommand{\lift}{\widetilde}
\newcommand{\htp}{\simeq}
\newcommand{\OO}{\mathbb{O}}
\title{Axioms for the coincidence index of maps between manifolds of the same
dimension }
\author{Daciberg Lima Gon\c{c}alves and P. Christopher Staecker}
\begin{document}
\maketitle

\begin{abstract}
We study the coincidence theory of maps between two manifolds  
of the same dimension from an axiomatic viewpoint. First we  
look at coincidences
of maps between manifolds where one of the maps is orientation true,
and give a set of axioms such that characterizes the local index (which is  
an integer valued function). Then we consider coincidence theory for  
arbitrary pairs of maps between two manifolds. Similarly we provide a  
set of axioms which
characterize the local index,  which in this case is a function with  
values in $\Z\oplus \Z_2$. We also show in each setting that the group of values for the index (either $\Z$ or $\Z\oplus \Z_2$) is determined by the axioms. 

Finally, for the general case of coincidence theory  
for arbitrary pairs of maps between two manifolds we provide a set of  
axioms which charaterize the
local Reidemeister trace which is an element of an abelian group which  
depends on the pair of functions. These results extend known results  
for coincidences between orientable differentiable manifolds.
\end{abstract}

\section{Introduction}
For two mappings $f,g:M\to N$, we say that $x\in M$ is a coincidence point of $f$ and $g$ when $f(x) = g(x)$. In 1955, Schirmer (\cite{sch55}) defined a local coincidence index in the setting where $M$ and $N$ are orientable manifolds of the same (finite) dimension. This coincidence index generalizes the well-known fixed point index, and functions as an algebraic multiplicity count for coincidence points. The index is integer valued, is invariant under homotopies of $f$ and $g$, is additive on disjoint subsets, and is nonzero when $f$ and $g$ have a coincidence which cannot be removed by homotopy.

 The characterization of this type of functions has a long story.   C.~Watts in~\cite{wa62} characterized the Euler characteristic in an axiomatic way,
using a very simple set of axioms. Using similar types of axioms, recently M.~Arkowitz and R.~Brown in \cite{ab04} characterized the Lefschetz number  of selfmaps. In the same spirit the first  author  and J. Weber  characterized the equivariant  Lefschetz number and the   Reidemeister trace 
of selfmaps in \cite{gw07}. 

Another concept which also plays a very important r\^ole is the concept of the ``local fixed point index'',    where
now we have enlarged the  domain where our function is defined.  A characterization of this function in terms of axioms for finite polyhedra 
was given by B. O'Neill in \cite{on53}. O'Neill's axioms later appeared in the book by R. Brown, \cite{br71}. For the differentiable case the problem was analyzed in
 \cite{fps04}. For the  related function,  the local Reidemiester trace for fixed points, the problem was considered by the second author in \cite{stae08a}. 

Only very recently,
in \cite{stae07a}, for differentiable orientable manifolds, the second author showed that the coincidence index is the unique integer-valued function which satisfies 3 axioms: additivity, homotopy invariance, and a normalization axiom stating that the total index taken over the whole domain space equals the coincidence Lefschetz number. This result used a direct generalization of the techniques in \cite{fps04} by Furi, Pera, and Spadini.
This work was then extended (still in the setting of differentiable orientable manifolds) to a uniqueness result for the Reidemeister trace for coincidences subject to 5 axioms in \cite{stae08a}.

The goal of the present paper is to prove the uniqueness of the coincidence index and the Reidemeister trace subject to axioms similar to those in the above work (specifically \cite{fps04}, \cite{stae07a}, and \cite{stae08a}), without using the differentiability or orientability assumptions. 
As we will see, dropping the differentiability assumption does little to change the character of the work, while orientation becomes the major point of focus. 

When both manifolds $M$ and $N$ are orientable we can choose an orientation for each manifold, and the index at a coincidence point $x$ of maps $f$ and $g$ is defined in terms of the way $f$ and $g$ carry the orientation from $x$ into the orientation at $f(x)=g(x)$. 
When our spaces are not orientable, the lack of a consistent local orientation for all points of the coincidence set will render this approach problematic.
In fact, it has been known for some time that there cannot be an integer valued function which behaves like the coincidence index in the setting of nonorientable manifolds.

The situation is fairly well-behaved when one of the maps (say $g$) is \emph{orientation true}: this means that for a loop $\gamma \subset M$, 
the loop $\gamma$ preserves a local orientation if and only if $g(\gamma)$ does. In this case, we show that there is in fact a way to consistently orient the coincidence set and to define an integer valued coincidence index. This index is proven to be unique subject to axioms similar to those used in \cite{fps04} and \cite{stae07a}.

Next we move to the general case, where neither map is assumed to be orientation true. In this case the difficulties in consistently orienting the spaces are not avoidable, and in some cases the index will not be integer-valued. In particular we divide the coincidence set into two types, nondegenerate and degenerate, depending on whether or not a sort of orientation true property holds locally. We will see that the nondegenerate coincidence points will have a coincidence index with value in $\Z$, while the degenerate coincidence points will have an index in $\Z_2 = \Z/2\Z$.

Thus in the case where neither map is assumed to be orientation true, our coincidence index will have its value in the group $\Z \oplus \Z_2$. This index is proven to be unique subject to axioms similar to those used in the orientation true case.

The choice of the set of values for the index (either $\Z$ or $\Z\oplus \Z_2$) is not arbitrary. In fact we show that any index with values in an abelian group $G$ which satisfies our axioms and and additional condition will in fact have values in a subgroup of $G$ isomorphic to $\Z$ or $\Z\oplus \Z_2$. 

One fundamental change in the character of the general case is that the index is somewhat less local. In particular the domain of the maps $f$ and $g$ becomes very important. It is possible, for example, to have maps $f,g:M \to N$ with open sets $U \subset V \subset M$ such that the index of $f$ and $g$ on $U$ is different from the index of $f|_V$ and $g|_V$ on $U$, where $f|_V,g|_V:V \to N$ are the restrictions of $f$ and $g$ to $V$. This can occur, for example, when $g$ is not orientation true and $M$ is nonorientable (so the index of $f$ and $g$ is in $\Z\oplus \Z_2$) but $V$ is orientable as a submanifold, with $g(V)$ contained in an orientable submanifold of $N$ (so $g|_V$ is orientation true and the index of $f|_V$ and $g|_V$ is in $\Z$).
Because of this, we must keep a careful account of the domain of the maps, and the value of the index will depend on this domain. Our focus, therefore, will be on \emph{local maps}, ones defined only on specific subsets of $M$.

We also extend the result of \cite{stae08a}, which is a uniqueness theorem for the Reidemeister trace. We show that, assuming $g$ is orientation true, the Reidemeister trace of maps $f$ and $g$ has value in $\Z\Reid(f,g)$, (where $\Reid(f,g)$ is the set of Reidemeister classes), and is unique subject to axioms similar to those in \cite{stae08a}. In the general case (when neither map is orientation true), we prove the uniqueness of a Reidemeister trace with value in $(\Z \oplus \Z_2)\Reid(f,g)$. We further show that $\Reid(f,g)$ splits as a disjoint union $\Reid(f,g) = \Reid_n(f,g) \sqcup \Reid_d(f,g)$ so that the value of the Reidemeister trace is always in $\Z\Reid_n(f,g) \oplus \Z_2 \Reid_d(f,g)$.

We begin in Section \ref{prelimsection} with a careful discussion of local orientations and properties of orientation true maps. In Section \ref{otruesection} we give our axioms and uniqueness result in the case where $g$ is orientation true. Section \ref{nonotruesection} drops this assumption and gives axioms and a uniqueness result in the general case. In Section \ref{valuessection} we show that the group of values of the index must be (isomorphic to) $\Z$ or $\Z\oplus \Z_2$. The results concerning the Reidemeister trace follow in Section \ref{rtsection}.
The bulk of the paper concerns the uniqueness of the index in various settings with respect to various axiom schemes. We conclude with a brief appendix concerning the existence, much of which is already documented in the literature.

\section{Local orientations and orientation true maps}\label{prelimsection}
We begin by discussing a suitable setting for the study of coincidences between  manifolds independent of whether the manifolds are  orientable or not.
This setting is guided  by the properties and knowledge  we have about coincidence theory, in particular when the manifolds involved are not both 
orientable.
 For  maps between two orientable manifolds, the coincidence theory has been well understood for some years (see \cite{sch55}) and as part
 of the data we fix one orientation 
for the domain and one orientation for the  target. 
We will extend this approach without assuming orientability in such a way that the  procedure reduces to the classical case if it happens that the  two manifolds are orientable. 
An intermediate situation is when the  two manifolds are not necessarily orientable but we restrict to
  the class of pairs of maps   where one of the maps is orientation-true. It is known that  the coincidence theory   for such pairs 
of maps has many similar properties to the  case where the manifolds are orientable. 

\subsection{  Local orientations} \label{orientsubsection}

Let $M$ be a path connected (or  equivalently connected in our particular case)
$n$-dimensional manifold. There is a unique  local system of coefficients
with local group $\Z$, which we call  the ``orientable 
bundle over $M$'', which exists whether or not the manifold is orientable.
This  local system is
provided once at a chosen point $x_0\in M$ we have the group  $\Z$ and a
representation $\pi_1(M, x_0) \to Aut(\Z)$ (see \cite{wh78}  Ch. VI, section 1 theorems (1.11) and (1.12)). The representation is simply given 
by sending an element $\alpha \in \pi_1(M,x_0)$
to the automorphism of $\Z$ which is multiplication by $\sign(\alpha)$. This sign is defined to be $\pm 1$ according to whether a local orientation is preserved or reversed when translated around the loop $\alpha$. Observe  that given an arbitrary point the transport
of a local  orientation along a path  provides a concrete construction of the unique (up to isomorphism)  bundle over  $M$ of the local system.

\begin{defn} 
The {\it orientable  bundle over $M$} is  the unique (up  to isomorphism) bundle
with   local group $\Z$ determined by the 
representation $\theta: \pi_1(M, x_0) \to \Aut(\Z)$ defined above 
where $x_0\in M$.
\end{defn}

If the manifold is orientable then the 
representation described above is the trivial homomorphism, and if the manifold is not orientable the representation is non-trivial. In 
any case we have  just one bundle.
Now we define a {\it local orientation at $x_0 \in M$}  as a chosen  generator of the local group
$H_n(U, U-x_0; \Z)$ for some small neighborhood $U$ of $x_0$. This generator we identify with 
$1$ of the local group $\Z$ at $x_0$. Note that if $M$ is path connected and orientable, then a choice of local orientation at a 
point is equivalent to a choice of a global orientation for $M$.

For the purpose of coincidence theory, based on our knowledge in the case where the manifolds are orientable,  we need to  
choose some kind of orientation  associated to the pair  $(M,N)$.
For  $(x_0,  y_0)\in M \times N$, we have two local orientations  at  $x_0$ and two local orientations 
at $y_0$. If we denote one local orientation  at a point $x$ by $O_{x}$ then the other one we  denote by
$-O_{x}$.  We say that two pairs 
$(O_{x_0}, O_{y_0})$,  $(O'_{x_0}, O'_{y_0})$ are equivalent if 
$(O_{x_0}, O_{y_0})=(\sigma O'_{x_0}, \sigma O'_{y_0})$ for some $\sigma \in\{ +1, -1\}$. In this case we write $[O_{x_0}, O_{y_0}]=[O'_{x_0}, O'_{y_0}]$.
So we have two equivalence classes at each pair of points $(x_0,y_0)$.  Then we define:

\begin{defn} A {\it local orientation for $(M, N)$ at $(x_0,  y_0)$} is a choice of one of these two classes defined above. 
 \end{defn}

\subsection{Orientation-true maps and coherent orientations} \label{otsubsection}
A map $f:M\to N$ is \emph{orientation true} when for any loop $\gamma$ in $M$, we have $\sign (\gamma) = \sign(f(\gamma))$. 

Here we state some properties of orientation true maps which are used and we apply some concepts from the previous subsection
to study coincidences of pairs where the second map is orientation true. 

First we show that the orientation true property is homotopy invariant.
\begin{lem} \label{othtp}
If $f,f':M \to N$ are homotopic and $f$ is orientation true, then $f'$ is orientation true.
\end{lem}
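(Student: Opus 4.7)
The plan is to exploit the fact that the sign function $\sign\colon \pi_1(N, y_0) \to \{+1, -1\}$ takes values in an abelian group, so it is invariant under conjugation and hence depends only on the free homotopy class of a loop in $N$ (equivalently, it factors through $H_1(N)$).

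Let $H\colon M \times I \to N$ be a homotopy from $f$ to $f'$, and fix an arbitrary loop $\gamma\colon I \to M$ based at some $x_0 \in M$. Define the map $K\colon I \times I \to N$ by $K(t,s) = H(\gamma(t), s)$. Then $K(\cdot, 0) = f \circ \gamma$ is a loop at $f(x_0)$ and $K(\cdot,1) = f' \circ \gamma$ is a loop at $f'(x_0)$, so $K$ is a free homotopy between the loops $f\gamma$ and $f'\gamma$ in $N$. The path $\alpha(s) = H(x_0, s)$ joins $f(x_0)$ to $f'(x_0)$, and the standard argument shows that in $\pi_1(N, f'(x_0))$ the class of $f'\gamma$ equals the class of $\alpha^{-1} \cdot (f\gamma) \cdot \alpha$ (conjugation by $\alpha$ composed with the change-of-basepoint isomorphism).

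Since $\sign$ is a homomorphism into the abelian group $\{\pm 1\}$, it is invariant under conjugation, and the change-of-basepoint isomorphism commutes with $\sign$ (the orientation character is canonically defined, independent of basepoint). Hence $\sign(f'\gamma) = \sign(f\gamma)$. Combining with the orientation-true hypothesis on $f$ gives
\[
\sign(\gamma) = \sign(f\gamma) = \sign(f'\gamma),
\]
which is exactly the orientation-true condition for $f'$.

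There is no serious obstacle here; the only subtlety is the basepoint bookkeeping caused by the fact that $f\gamma$ and $f'\gamma$ a priori live in different fundamental groups. This is handled cleanly once one observes that $\sign$ is a well-defined invariant of free homotopy classes of loops, which is the one structural fact worth stating explicitly before carrying out the calculation.
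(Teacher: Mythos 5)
Your proof is correct and is essentially the paper's own argument: both rest on the fact that a homotopy gives $f'_\#(\gamma) = \alpha^{-1}\,(f_\#\gamma)\,\alpha$ for the path $\alpha(s)=H(x_0,s)$, and that $\sign$ is unchanged by this conjugation/change of basepoint, so $\sign(f'\gamma)=\sign(f\gamma)=\sign(\gamma)$. The paper merely phrases the invariance step as $\sign(\theta * f_\#(\gamma) * \theta^{-1}) = \sign(\theta*\theta^{-1})\sign(f_\#(\gamma)) = \sign(f_\#(\gamma))$, which is the same bookkeeping you carry out.
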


\begin{proof} The induced homomorphisms $f_{\#}:\pi_1(M,x_0) \to \pi_1(N,f(x_0))$ and $f'_{\#}: \pi_1(M,x_0) \to \pi_1(N,f'(x_0))$ satisfy
$f'_\#= \theta f_{\#}\theta^{-1}$, where $\theta$ is a path from $f(x_0)$ to $f'(x_0)$. This implies that  for every $\alpha \in \pi_1(M,x_0)$ we have 
\[ \sign(f'_{\#}(\alpha))= \sign(\theta * f_{\#}(\alpha) * \theta^{-1})= \sign(\theta * \theta^{-1}) \sign( f_{\#}(\alpha)) = \sign(f_{\#}(\alpha))\] and the result follows. 
 \end{proof}

Now let us consider a pair of maps $(f,g)$ from $M$ to $N$. For any open set $U\subset M$, let $\Coin(f,g,U) = \{x\in U \mid f(x) = g(x)\}$, and $\Coin(f,g) = \Coin(f,g,M)$.

For a path $\lambda$ from $a$ to $b$ and a local orientation $O$ at $a$, let 
$\lambda(O)$ be the orientation at $b$ given by transport of $O$ along $\lambda$.
Let $x_0,x_1 \in \Coin(f,g)$, and let $y_i=f(x_i)=g(x_i)$. Let $O_{x_i}$ be a local orientation at $x_i$, and similarly $O_{y_i}$ be a local orientation at $y_i$. We say that $[O_{x_0}, O_{y_0}]$ and $[O_{x_1}, O_{y_1}]$ are \emph{$g$-coherent} when there is a path $\lambda$ from $x_0$ to $x_1$ with $[O_{x_1},O_{y_1}] = [\lambda(O_{x_0}), g(\lambda)(O_{y_0})]$. The following shows that, when $g$ is orientation true, this formula will hold for \emph{any} path from $x_0$ to $x_1$.

\begin{lem}\label{anypath}
When $[O_{x_0}, O_{y_0}]$ and $[O_{x_1}, O_{y_1}]$ are $g$-coherent and $g$ is orientation true, we have $[O_{x_1},O_{y_1}] = [\gamma(O_{x_0}), g(\gamma)(O_{y_0})]$ for \emph{any} path $\gamma$ from $x_0$ to $x_1$.
\end{lem}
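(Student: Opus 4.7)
The plan is to compare the given path $\lambda$ (from the definition of $g$-coherence) with an arbitrary path $\gamma$, and control the discrepancy between the transported orientations via a loop. First I would form the loop $\alpha := \gamma * \lambda^{-1}$ based at $x_0$, so that $\gamma$ is homotopic to $\alpha * \lambda$, and observe that $g \circ \gamma$ is then homotopic to $g(\alpha) * g(\lambda)$ in $N$, with $g(\alpha)$ a loop at $y_0$.

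Next I would compute the action on orientations. By definition of $\sign$, transport along $\alpha$ acts on $O_{x_0}$ by the scalar $\sign(\alpha)$, and transport along $g(\alpha)$ acts on $O_{y_0}$ by $\sign(g(\alpha))$. Composing transports along $\alpha * \lambda$ and $g(\alpha)*g(\lambda)$ gives
\[ \gamma(O_{x_0}) = \sign(\alpha)\,\lambda(O_{x_0}), \qquad g(\gamma)(O_{y_0}) = \sign(g(\alpha))\,g(\lambda)(O_{y_0}). \]

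This is where the orientation true hypothesis does the essential work: it forces $\sign(\alpha) = \sign(g(\alpha))$, so the two scalar factors coincide. Call this common sign $\sigma \in \{\pm 1\}$. Then
\[ [\gamma(O_{x_0}),\, g(\gamma)(O_{y_0})] = [\sigma\,\lambda(O_{x_0}),\, \sigma\, g(\lambda)(O_{y_0})] = [\lambda(O_{x_0}),\, g(\lambda)(O_{y_0})], \]
since the equivalence relation defining $[\,\cdot\,,\,\cdot\,]$ identifies a pair with its simultaneous sign-flip. Finally, the $g$-coherence hypothesis applied to the particular path $\lambda$ identifies this bracket with $[O_{x_1}, O_{y_1}]$, completing the proof.

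The only real subtlety is verifying that $\sign$ is multiplicative on concatenations of loops and that the transport of orientations is homotopy-invariant, so that the decomposition $\gamma \simeq \alpha * \lambda$ really translates into the stated multiplication of signs; both are standard facts about the orientable bundle recalled in Section~\ref{orientsubsection}. Without orientation trueness the two signs could differ, and the argument would collapse — this is exactly the point that motivates the definition of $g$-coherence and, later, the need to work modulo $2$ in the non-orientation-true setting.
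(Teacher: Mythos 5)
Your argument is correct and is essentially the paper's own proof: the paper likewise compares $\gamma$ with the coherence path $\lambda$ via the loop $\gamma * \lambda^{-1}$, notes the transport discrepancy in the domain is $\sign(\gamma*\lambda^{-1})$ while in the codomain it is $\sign(g(\gamma)*g(\lambda^{-1}))$, and uses orientation trueness to make these coincide so the pair changes only by a simultaneous sign flip, which does not change the class $[\,\cdot\,,\,\cdot\,]$. No gaps to report.
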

\begin{proof}
Since $[O_{x_0}, O_{y_0}]$ and $[O_{x_1}, O_{y_1}]$ are $g$-coherent, there is a path $\lambda$ from $x_0$ to $x_1$ with $[O_{x_1},O_{y_1}] = [\lambda(O_{x_0}), g(\lambda)(O_{y_0})]$. Now let $\gamma$ be another path from $x_0$ to $x_1$. We will show 
that $[\lambda(O_{x_0}),g(\lambda)(O_{y_1})] = [\gamma(O_{x_0}), g(\gamma)(O_{y_0})]$.

Let $\sigma = \pm 1$ be the sign such that $\lambda(O_{x_0}) = \sigma \gamma(O_{x_0})$, so that $\sigma = \sign(\gamma * \lambda^{-1})$. Since $g$ is orientation true this means $\sigma  = \sign(g(\gamma) * g(\lambda^{-1})) = 1$, which implies that $g(\lambda)(O_{y_0}) = \sigma g(\gamma)(O_{y_0})$. Thus we have $(\lambda(O_{x_0}),g(\lambda)(O_{y_0})) = (\sigma \gamma(O_{x_0)}, \sigma g(\gamma)(O_{y_0}))$ which means that $[\lambda(O_{x_0}),g(\lambda)(O_{y_0})] = [\gamma(O_{x_0}), g(\gamma)(O_{y_0})]$.
\end{proof}

Let $\C(f,g) = \{ (x,y) \mid f(x)=g(x)=y \}$.

\begin{defn}
A map  $\OO(x,y) = [O_x,O_y]$ which gives a class of pairs of local orientations at points $(x,y)\in \C(f,g)$ for which  $\OO(x_0,y_0)$ is $g$-coherent with  $\OO(x_1,y_1)$ for all $(x_i,y_i)\in \C(f,g)$ is called a \emph{$g$-coherent orientation of $\C(f,g)$}.
\end{defn}

Given a point $(x_0,y_0) \in \C(f,g)$ with $g$ orientation true, we can specify a $g$-coherent orientation $\OO$ of $\C(f,g)$ by choosing a specific 
orientation for $\OO(x_0,y_0) = [O_{x_0},O_{y_0}]$ (there are two possible choices), and then defining $\OO(x_i,y_i)$ to be $[\lambda(O_{x_0}),g(\lambda)(O_{y_0})]$, where $\lambda$ is a path from $x_0$ to $x_i$. This $\OO$ will be well defined (will not depend on choice of $\lambda$) by the above lemma. Since there are two possible values for $\OO(x_0,y_0)$, we have:
\begin{prop}\label{twoorientations}
When $g$ is orientation true, there are exactly two possible $g$-coherent orientations of $\C(f,g)$. These are each uniquely determined by their value at a single point.
\end{prop}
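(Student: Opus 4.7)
The plan is to show that the construction sketched in the paragraph preceding the proposition (pick a basepoint value, then propagate along paths) produces a well-defined $g$-coherent orientation, and that every $g$-coherent orientation arises this way. Concretely, I would fix $(x_0,y_0) \in \C(f,g)$ together with a representative pair $(O_{x_0},O_{y_0})$ for one of the two classes at that point, and for any other $(x_1,y_1) \in \C(f,g)$ define
\[ \OO(x_1,y_1) := [\lambda(O_{x_0}),\, g(\lambda)(O_{y_0})], \]
where $\lambda$ is any path in $M$ from $x_0$ to $x_1$ (available since $M$ is path-connected). Three things then need to be checked: path-independence, $g$-coherence of the resulting $\OO$, and uniqueness.

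For path-independence, suppose $\lambda$ and $\gamma$ are two paths from $x_0$ to $x_1$, and let $\sigma = \sign(\lambda * \gamma^{-1}) \in \{\pm 1\}$. Then by definition of $\sign$, transport gives $\lambda(O_{x_0}) = \sigma\,\gamma(O_{x_0})$. Since $g$ is orientation true, Lemma \ref{othtp}-type reasoning—i.e.\ $\sign(g(\lambda) * g(\gamma)^{-1}) = \sign(\lambda * \gamma^{-1}) = \sigma$—gives $g(\lambda)(O_{y_0}) = \sigma\, g(\gamma)(O_{y_0})$. The common sign $\sigma$ then shows the two pairs lie in the same equivalence class, exactly as in the proof of Lemma \ref{anypath}. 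Independence of the representative $(O_{x_0},O_{y_0})$ is immediate: replacing it with $(-O_{x_0},-O_{y_0})$ multiplies both components by $-1$, hence yields the same class. For $g$-coherence, given $(x_i,y_i),(x_j,y_j) \in \C(f,g)$ with witness paths $\lambda_i,\lambda_j$ from $x_0$, the path $\lambda_j * \lambda_i^{-1}$ from $x_i$ to $x_j$ transports $\OO(x_i,y_i)$ to $\OO(x_j,y_j)$.

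For uniqueness, let $\OO'$ be any $g$-coherent orientation. By the very definition of $g$-coherence, for each $(x_1,y_1)$ there is some path $\lambda$ from $x_0$ to $x_1$ with $\OO'(x_1,y_1) = [\lambda(O_{x_0}),\, g(\lambda)(O_{y_0})]$, where $(O_{x_0},O_{y_0})$ is any representative of $\OO'(x_0,y_0)$. Hence $\OO'$ is completely determined by its value at the single point $(x_0,y_0)$. Since there are exactly two classes at $(x_0,y_0)$, and each yields a valid $g$-coherent orientation by the construction above, there are exactly two such orientations, and they are distinct because they disagree at the basepoint.

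The only nontrivial step is the path-independence calculation, but this is essentially the same argument already made in Lemma \ref{anypath}, so I expect the proof to be a short application of that lemma rather than a genuinely new calculation.
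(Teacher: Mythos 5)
Your proposal is correct and follows essentially the same route as the paper, which justifies the proposition by exactly this construction (propagate a chosen class at a basepoint along paths, with well-definedness supplied by Lemma \ref{anypath}) in the paragraph immediately preceding the statement. Your write-up merely spells out the coherence and uniqueness checks that the paper leaves implicit, so no genuinely new argument is involved.
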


When we have homotopies $f \htp f'$ and $g \htp g'$ and a $g$-coherent orientation  $\OO$ of $\C(f,g)$, there is a naturally related choice of a $g'$-coherent orientation $\OO'$ of $\C(f',g')$. Let $(x,y)\in \C(f,g)$ and $(x',y')\in\C(f',g')$, and let $\OO(x,y) = [O_x, O_y]$. If $\gamma$ is a path from $x$ to $x'$ and $G$ is the homotopy of $g$ to $g'$, then let $\gamma_G([O_x,O_y]) = [\gamma(O_x), G(\gamma(t),t)(O_y)]$. 

\begin{defn}\label{Hreldefn}
Let $\OO$ and  $\OO'$ be $g$- (and $g'$- respectively) coherent orientations of $\C(f,g)$ and $\C(f',g')$ with $f\htp f'$ and $g\htp g'$, and let $G$ be the homotopy of $g$ to $g'$. We say that  $\OO'$ is \emph{$G$-related} to  $\OO$ when there are $(x,y)\in \C(f,g)$ and 
$(x',y') \in \C(f',g')$ with a path $\gamma$ from $x$ to $x'$ and  $\OO'(x',y') = \gamma_G(\OO(x,y))$.
\end{defn}

Such an orientation $\OO'$ is in fact unique:

\begin{lem}\label{hrel}
Let  $\OO$ be a $g$-coherent orientation of $\C(f,g)$ with $g$ orientation true, with $f\htp f'$ and $g\htp g'$, and let $G$ be the homotopy of $g$ to $g'$. Then there is a unique orientation $\OO'(x,y)$ of $\C(f',g')$ which is $G$-related to  $\OO$.
\end{lem}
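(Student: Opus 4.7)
The plan is to reduce everything to Proposition \ref{twoorientations} via the observation that $g'$ is itself orientation true. Since $g \htp g'$ and $g$ is orientation true, Lemma \ref{othtp} yields that $g'$ is orientation true, so any $g'$-coherent orientation of $\C(f',g')$ is uniquely determined by its value at a single point, and exactly two such orientations exist.

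\emph{Existence} is immediate: pick any $(x_0,y_0) \in \C(f,g)$, any $(x'_0,y'_0) \in \C(f',g')$, and a path $\gamma_0$ from $x_0$ to $x'_0$; set $\OO'(x'_0,y'_0) := (\gamma_0)_G(\OO(x_0,y_0))$ and extend to a $g'$-coherent orientation on all of $\C(f',g')$ by Proposition \ref{twoorientations}. This orientation is $G$-related to $\OO$ by construction.

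For \emph{uniqueness}, suppose $\OO'_1, \OO'_2$ are both $G$-related to $\OO$ via witnesses $(x_i, y_i, x'_i, y'_i, \gamma_i)$, $i = 1,2$. By Proposition \ref{twoorientations} it suffices to check $\OO'_1(x'_2, y'_2) = \OO'_2(x'_2, y'_2)$. I would unfold the left side using $g'$-coherence along a path $\mu$ from $x'_1$ to $x'_2$, and unfold $\OO(x_2,y_2)$ (appearing inside the definition of $\OO'_2(x'_2,y'_2)$) using $g$-coherence along a path $\lambda$ from $x_1$ to $x_2$. Both candidate orientation classes at $(x'_2, y'_2)$ are then expressible as transports of a common base pair $(O_{x_1}, O_{y_1})$, differing by the sign $\sigma_M$ of the loop $L := \gamma_1 * \mu * \gamma_2^{-1} * \lambda^{-1}$ at $x_1$ in $M$ and by the sign $\sigma_N$ of the loop $L' := G_{\gamma_1} * g'(\mu) * G_{\gamma_2}^{-1} * g(\lambda)^{-1}$ at $y_1$ in $N$, where $G_{\gamma}(t) := G(\gamma(t), t)$.

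The heart of the proof is the identity $\sigma_M = \sigma_N$. For each path $\beta$ in $M$, the square $(s,t) \mapsto G(\beta(s), t)$ exhibits a rel-endpoint homotopy $G_\beta \htp g(\beta) * \epsilon_{\beta(1)}$, where $\epsilon_x(t) := G(x,t)$ is the ``track'' of $x$ under the homotopy; similarly $\epsilon_{x'_1} * g'(\mu) \htp g(\mu) * \epsilon_{x'_2}$. Substituting these into $L'$ and cancelling the tracks yields $L' \htp g(L)$. Since $g$ is orientation true, $\sigma_N = \sign(L') = \sign(g(L)) = \sign(L) = \sigma_M$, and the two orientation classes coincide. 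The main obstacle is this square-filling bookkeeping for $L' \htp g(L)$; once that is secured, orientation-trueness of $g$ does the rest and Proposition \ref{twoorientations} finishes the argument.
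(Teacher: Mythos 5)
Your proposal is correct and follows essentially the same route as the paper: existence by transporting $\OO$ along one path and extending via Proposition \ref{twoorientations} (with $g'$ orientation true by Lemma \ref{othtp}), and uniqueness by comparing the two candidate orientations at a single point, using Proposition \ref{pathprop} to cancel the homotopy tracks and the orientation-true property of $g$ to identify the discrepancy signs in domain and codomain. The only cosmetic difference is that you merge into a single loop computation ($L' \htp g(L)$) what the paper carries out as two separate independence checks (first independence of the choice of path, then of the choice of base points); the ingredients and the key homotopy identity are the same.
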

In the proof we make repeated use of the following fact, which is an exercise:
\begin{prop}\label{pathprop}
If $G$ is a homotopy from $g$ to $g'$ and $\alpha$ is a path from $x$ to $x'$, then $G(\alpha(t),t)$ is a path from $g(x)$ to $g'(x')$ and
\[ G(\alpha(t),t) \htp g(\alpha) * G(x',t) \htp G(x,t) * g'(\alpha). \]
\end{prop}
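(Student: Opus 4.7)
The approach is to recognize the three paths in question as images, under a single continuous map of the unit square into $N$, of three paths in $I\times I$ that share endpoints. Since $I\times I$ is convex, any two such paths are homotopic rel endpoints in the square, and pushing that homotopy forward gives the required homotopy in $N$.

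Define the continuous map $H\colon I\times I \to N$ by $H(s,t) = G(\alpha(s),t)$. Evaluation at the corners gives $H(0,0)=g(x)$, $H(1,0)=g(x')$, $H(0,1)=g'(x)$, and $H(1,1)=g'(x')$, and the four sides of the square are carried by $H$ to the four paths $g\circ\alpha$ (bottom), $g'\circ\alpha$ (top), $G(x,\cdot)$ (left), and $G(x',\cdot)$ (right). In particular, the diagonal path $s\mapsto (s,s)$ in $I\times I$ is sent by $H$ to $t\mapsto G(\alpha(t),t)$, which therefore runs from $g(x)$ to $g'(x')$, establishing the first claim of the proposition.

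For the homotopy identity, consider three paths in $I\times I$ from $(0,0)$ to $(1,1)$: the diagonal $s\mapsto (s,s)$; the concatenation of the bottom edge with the right edge, $s\mapsto(s,0)$ followed by $t\mapsto(1,t)$; and the concatenation of the left edge with the top edge, $t\mapsto(0,t)$ followed by $s\mapsto(s,1)$. Under $H$ these three paths become, respectively, $G(\alpha(t),t)$, the concatenation $g(\alpha)*G(x',t)$, and the concatenation $G(x,t)*g'(\alpha)$. Since $I\times I$ is convex, a straight-line homotopy rel endpoints connects any two of these three paths in the square, and composing such homotopies with $H$ produces homotopies rel endpoints in $N$, yielding both of the asserted equivalences.

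There is no significant obstacle here: the only minor care is in the parametrization, since the two ``L-shaped'' paths are concatenations and the usual convention reparametrizes each leg to run on half the interval. This is handled in the standard way by including the affine reparametrization of $[0,1/2]$ and $[1/2,1]$ as part of the straight-line homotopy in $I\times I$; this does not affect continuity or the fact that endpoints are fixed, so the pushforward under $H$ remains a valid path homotopy.
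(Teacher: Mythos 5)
Your proof is correct: the map $H(s,t)=G(\alpha(s),t)$ on the square, together with convexity of $I\times I$ giving rel-endpoint homotopies between the diagonal and the two edge concatenations, is exactly the standard argument, and the parametrization point you flag is indeed only cosmetic. The paper itself gives no proof of Proposition \ref{pathprop} (it is explicitly left as an exercise), and your argument is the intended one.
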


\begin{proof}[Proof of Lemma \ref{hrel}]
Choose some $(x,y)\in \C(f,g)$ and $(x',y')\in \C(f',g')$. 
Making the definition $\OO'(x',y')=\gamma_G(\OO(x,y))$ suffices to define $\OO'$ on all of $\C(f',g')$ by Proposition \ref{twoorientations}. For the uniqueness, it suffices to show that this construction of $\OO'$ does not depend on the choice of $\gamma$ or on the choice of the points $x,x',y,y'$.

First we show that $\OO'$ is independent of the choice of path $\gamma$. Let $\bar \gamma$ be an alternative choice of path from $x$ to $x'$. Let $\OO(x,y) = [O_x, O_y]$. Our goal is to show that 
\[ [\gamma(O_x), G(\gamma(t), t)(O_y)] = [\bar \gamma(O_x), G(\bar \gamma(t), t)(O_y)]. \]

Let $\sigma = \pm 1$ be the sign such that $G(\gamma(t),t)(O_y) = \sigma G(\bar \gamma(t),t)(O_y)$. Then since transports along homotopic paths are equal, Proposition \ref{pathprop} gives $G(\gamma(t),t) (O_y) = g(\gamma)*G(x',t)(O_y)$ and similarly with $\bar \gamma$. Thus we have
\[ g(\gamma) * G(x',t)(O_y) = \sigma g(\bar \gamma) * G(x',t)(O_y), \]
and so $\sigma = \sign(g(\gamma^{-1})*g(\bar \gamma))$. Since $g$ is orientation true this means $\sigma = \sign(\gamma^{-1}*\bar \gamma)$, which means $\gamma(O_x) = \sigma \bar \gamma(O_x)$. Thus we have
\[ (\gamma(O_x), G(\gamma(t), t)(O_y)) = (\sigma \bar \gamma(O_x), \sigma G(\bar \gamma(t), t)(O_y)), \]
which completes the argument.

Now we show that $\OO'$ does not depend on the choice of points $x,y,x',y'$. Our orientation $\OO'$ was constructed by starting with $\OO(x,y)$ and carrying this orientation through $G$ on the path $\gamma$ to an orientation at $(x',y')$. 

Let $(\bar x, \bar y) \in \C(f,g)$ and $(\bar x', \bar y') \in \C(f',g')$, and we will construct an orientation $\OO'$ of $\C(f',g')$ by starting with $\OO(\bar x, \bar y)$ and carrying this orientation through $G$ to $(\bar x', \bar y')$. Then we must show that $\OO' = \bar \OO'$. This we do by showing that they agree at the point $(\bar x',\bar y')$. 

In our construction above, we have $\OO'(x',y') = [\gamma(O_x), G(\gamma(t),t)(O_y)]$. Let $\lambda'$ be a path from $x'$ to $\bar x'$, and then by coherence of $\OO'$ we have
\begin{align}
\OO'(\bar x', \bar y') &= [\lambda' (\gamma(O_x)), g'(\lambda') (G(\gamma(t),t)(O_y))] \notag \\
&= [(\gamma * \lambda')(O_x), (G(\gamma(t),t) * g'(\lambda'))(O_y))].\label{nobarO}
\end{align}

For $\bar \OO'(\bar x',\bar y')$, we start with $\OO(\bar x, \bar y) = [\lambda(O_x), g(\lambda)(O_y)]$, where $\lambda$ is a path from $x$ to $\bar x$. Let $\bar \gamma = \lambda^{-1} * \gamma * \lambda'$, this is a path from $\bar x$ to $\bar x'$. Then carrying $[\lambda(O_x), g(\lambda)(O_y)]$ through $G$ along $\bar \gamma$ gives
\begin{align}
\bar \OO'(\bar x', \bar y') &= [\bar \gamma (\lambda(O_x)), G(\bar \gamma(t),t)( g(\lambda) (O_y))] \notag \\
&= [(\lambda * \bar \gamma) (O_x), (g(\lambda) * G(\bar \gamma(t),t)) (O_y))] \notag \\
&= [(\gamma * \lambda') (O_x), (g(\lambda) * G(\bar \gamma(t),t)) (O_y))]. \label{barO}
\end{align}

To show equality of \eqref{nobarO} and \eqref{barO} it suffices to show that the paths $G(\gamma(t),t) * g'(\lambda')$ and $g(\lambda)*G(\bar \gamma(t),t)$ are homotopic. Using Proposition \ref{pathprop} repeatedly gives
\begin{align*}
g(\lambda)*G(\bar \gamma(t),t) &\htp g(\lambda) * g(\bar \gamma) * G(\bar x',t) \htp g(\lambda * \bar\gamma) * G(\bar x',t) \\
&\htp g(\gamma * \lambda') * G(\bar x',t) \htp G(x,t) * g'(\gamma * \lambda') \\ 
&\htp G(x,t) * g'(\gamma) * g'(\lambda') \htp G(\gamma(t),t) * g'(\lambda')
\end{align*}
as desired.

\end{proof}

When $f,g:M \to M$ are selfmaps of orientable manifolds, we can relate the local orientations at points in the domain to the local orientations at points in the codomain.

\begin{defn} 
Let $f,g:M \to M$ and let $\OO$ be a $g$-coherent orientation of $\C(f,g)$.
We say that $\OO$ is \emph{oriented consistently} if, given any point $(x,y) \in \C(f,g)$ with $\OO(x,y) = [O_x,O_y]$, we 
have $\lambda(O_x) = O_y$ for any (and thus every) path $\lambda$ from $x$ to $y$.
\end{defn} 

Intuitively this indicates that the orientation chosen in the domain space is the same as that chosen in the codomain space.
If $\C(f,g)$ consists of a single point $(x,x)$, then $\OO$ is oriented consistently if and only if $\OO(x,x) = [O_x,O_x]$ for some local orientation $O_x$ at $x$.

\begin{defn}
For a homeomorphism $h:U \subset M \to N$ with a point $x\in U$ and local orientations $O_x, O_{h(x)}$, define $\sign (h,[O_x,O_{h(x)}])$ to be $+1$
if $h_*(O_{x}) = O_{h(x)}$, and $-1$ otherwise. 
\end{defn}

Note that this sign is well defined with respect to the two representations of the equivalence class $[O_{x}, O_{h(x)}]$. Also note that when $h$ is a diffeomorphism $h:\R^n \to \R^n$ and $O_x$ and $O_{h(x)}$ are the standard orientations of $\R^n$, then $\sign(h,[O_x,O_{h(x)}]) = \sign(\det(dh_x))$.

\section{The index for  pairs of maps where one of the maps is orientation-true}\label{otruesection}

To give our coincidence index a truly local setting, we will consider only \emph{local maps}. A local map $f$ of $M \to N$ is a continuous map whose domain $\dom f$ is an open subset of $M$, and whose image is in $N$. Note that all results of the previous section concerning orientations apply to pairs of local maps which have the same connected domain.

\subsection{Pairs of maps where the second is orientation true}
The coincidence index in this subsection will be defined on the following set:

\begin{defn}\label{tuples} 
Let  $\adm$  be the set of all tuples  $(f,
g, U, \OO)$, where $f,g$ are local maps of $M \to N$ for some manifolds $M$ and $N$ of the same dimension, the domains of $f$ and $g$ are the same connected set, the set $U\subset M$ is an open subset of the domain of $f$ and $g$, the set $\Coin(f, g, U)$ is compact, $g$ is 
orientation true, and $\OO$ is a $g$-coherent orientation of $\C(f,g)$. 
\end{defn}

Note that for $(f,g,U,\OO) \in \adm$ with $U \subset M$, we do not necessarily assume that $f$ and $g$ are restrictions of globally defined functions $\hat f,\hat g:M \to N$. 
In fact, in this section, the local index will not depend on whether or not there exist such extensions $\hat f, \hat g$. Further, as long as $\hat g$ remains orientation true, the local index will be the same for the tuples $(f,g,U,\OO)$ and $(\hat f, \hat g,U,\hat \OO)$ where $\hat \OO$ is the appropriate extension of $\OO$.

The set $\adm$
is called the set of \emph{admissible tuples}.
We say that a pair of homotopies $F, G: (\dom f) \times [0,1] \to N$ is
\emph{admissible} when $\Coin(F,G,U\times [0,1])$ is compact in 
$U \times [0,1]$. We say that two tuples 
$(f, g, U, \OO),  (f', g', U,\OO') \in
\adm$ are \emph{admissably homotopic} if there is a pair of
admissible homotopies taking $f$ to $f'$ and $g$ to $g'$ and 
$\OO'$ is $G$-related to $\OO$, where $G$ is the homotopy between 
$g$ and $g'$.

Our main result is that there is at most one function $\iota:  \adm \to \R$ satisfying
the following
axioms (compare to the axioms of \cite{fps04} and \cite{stae07a}):
\begin{axiom}[Additivity axiom]
Given $(f,g,U, \mathbb O)\in \adm$, if \ $U_1$ and
$U_2$ are disjoint open
subsets of $U$ with $\Coin(f,g, U) \subset U_1 \cup U_2$, 
then
\[ \iota(f,g,U, \mathbb O) = \iota(f,g,U_1, \mathbb O) +
\iota(f,g,U_2, \mathbb O). \]
\end{axiom}

\begin{axiom}[Homotopy axiom]
If $(f, g, U, \mathbb O) \in \adm$ and $(f',
g', U, \mathbb O')\in \adm$ are
admissibly homotopic, then
\[ \iota(f, g, U, \mathbb O) = \iota(f', g',U, \mathbb O'). \]
\end{axiom}

\begin{axiom}[Normalization axiom]
Let $(c,g,U,\OO) \in \adm$ with $c|_U$ a constant map with constant value $c$ and $g|_U$ an embedding with $g(x) = c$. If $\sign(g,\OO(x,c)) = 1$, then 
\[ \iota(c,g,U,\OO) = 1. \]
\end{axiom}

One useful property follows immediately from the additivity axiom:
\begin{prop}[Excision property]
Let $\iota:\adm \to \R$ satisfy the Additivity axiom.
Let $(f,g,U,\OO) \in \adm$ and let $U' \subset U$ be an open set with $\Coin(f,g,U) \subset U'$. Then
\[ \iota(f,g,U,\OO) = \iota(f,g,U',\OO). \]
\end{prop}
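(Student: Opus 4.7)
The plan is to derive the excision property directly from the Additivity axiom in two steps: first show that the index is zero when the ambient open set is empty, then apply additivity with $U_1 = U'$ and $U_2 = \emptyset$.

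For the first step, note that $(f,g,\emptyset,\OO)$ lies in $\adm$: the orientation-trueness of $g$ and the $g$-coherent orientation $\OO$ of $\C(f,g)$ do not depend on the choice of ambient open set, and $\Coin(f,g,\emptyset) = \emptyset$ is vacuously compact. Apply the Additivity axiom to this tuple with $U_1 = U_2 = \emptyset$. These are (trivially) disjoint open subsets of $\emptyset$, and $\Coin(f,g,\emptyset) = \emptyset \subset \emptyset \cup \emptyset$, so the hypothesis of additivity is satisfied. The conclusion reads
\[ \iota(f,g,\emptyset,\OO) = \iota(f,g,\emptyset,\OO) + \iota(f,g,\emptyset,\OO), \]
whence $\iota(f,g,\emptyset,\OO) = 0$.

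For the second step, I would first check that $(f,g,U',\OO) \in \adm$: since $\Coin(f,g,U) \subset U' \subset U$, we have $\Coin(f,g,U') = \Coin(f,g,U)$, which is compact by assumption. Now apply the Additivity axiom to the original tuple $(f,g,U,\OO)$ with $U_1 = U'$ and $U_2 = \emptyset$. These are disjoint open subsets of $U$, and by the hypothesis $\Coin(f,g,U) \subset U' = U_1 \cup U_2$. Thus
\[ \iota(f,g,U,\OO) = \iota(f,g,U',\OO) + \iota(f,g,\emptyset,\OO) = \iota(f,g,U',\OO), \]
using the first step in the last equality.

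There is no genuine obstacle; the only care required is to confirm that passing from $U$ to $U'$ or to $\emptyset$ preserves membership in $\adm$, which is immediate because the orientation-true condition and the $g$-coherent orientation $\OO$ are intrinsic to the pair $(f,g)$ rather than to the ambient open set.
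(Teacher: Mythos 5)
Your proof is correct and follows essentially the same route as the paper: both first deduce $\iota(f,g,\emptyset,\OO)=0$ by applying additivity with $U_1=U_2=\emptyset$, and then apply additivity to $(f,g,U,\OO)$ with $U_1=U'$, $U_2=\emptyset$. The extra checks that the tuples remain in $\adm$ are fine but not needed beyond what you wrote.
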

\begin{proof}
Applying the Additivity axiom to the disjoint union $\Coin(f,g,\emptyset) \subset \emptyset \cup \emptyset$ gives
\[ \iota(f,g,\emptyset,\OO) + \iota(f,g,\emptyset,\OO) = \iota(f,g,\emptyset,\OO), \]
and so $\iota(f,g,\emptyset,\OO) = 0$.

The excision property follows from the Additivity axiom applied to the union $\Coin(f,g,U) \subset U' \cup \emptyset$, which gives
\[ \iota(f,g,U,\OO) = \iota(f,g,U',\OO) + \iota(f,g,\emptyset, \OO)
=  \iota(f,g,U',\OO). \qedhere \]
\end{proof}

Let $\adm_{\R^n}$ be the set of admissable tuples $(f,g,U,\OO)$ of maps whose domain and codomain are subsets of $\R^n$, and $\OO$ is consistently oriented. From Lemmas 11 and 12 of \cite{stae08a}, the coincidence index is unique on $\adm_{\R^n}$ subject to additivity and homotopy axioms formulated similarly to ours, and a ``weak normalization'' axiom stating that the index of a constant map with the identity map is 1. (The requirement that the orientation be consistently oriented is implicit in \cite{stae08a}: a single orientation on $\R^n$ is fixed throughout the paper, making it impossible in the setting of maps $\R^n \to \R^n$ to choose different orientations in the domain and codomain.)

Let $\ind:\adm_{\R^n} \to \R$ denote this unique coincidence index for $\R^n$. If $\iota$ satisfies our three axioms, then the restriction of $\iota$ to $\adm_{\R^n}$ satisfies the three axioms of \cite{stae08a} (our Normalization axiom implies the weak normalization axiom when restricted to $\adm_{\R^n}$) and we have $\iota(f,g,U,\OO) = \ind(f,g,U,\OO)$ when $(f,g,U,\OO) \in \adm_{\R^n}$.

Next we show that if $\iota$ satisfies our three axioms, then its value on a euclidean neighborhood of a single coincidence point agrees with $\ind$ when we move the setting into $\R^n$ by applying charts.

\begin{lem}\label{naturality}
Let $(f,g,U,\OO) \in \adm$ with $\Coin(f,g,U)$ consisting of a single coincidence point $x$ such that $U$ is a euclidean neighborhood of $x$ and $f(U) \cup g(U)$ is contained in a euclidean neighborhood $W$ of $y=f(x)=g(x)$. Let $j:U \to \R^n$ and $h:W \to \R^n$ be homeomorphisms, and let $\bar f = h \circ f \circ j^{-1}$ and $\bar g = h \circ g \circ j^{-1}$. 

Let $\OO(x,y) = [O_x,O_y]$, and 
define an orientation $\bar \OO$ of $\C(\bar f, \bar g)$ by 
$\bar \OO(j(x),h(y)) = [j_*(O_x), h_*(O_y)]$.
If $\bar \OO$ is oriented consistently, and $\iota$ satisfies the three axioms, then 
\[ \iota(f,g,U,\OO) = \ind(\bar f, \bar g, \R^n, \bar \OO). \]
\end{lem}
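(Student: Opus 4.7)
The plan is to reduce both tuples to the same Normalization instance and chain the resulting equalities via the Homotopy axiom. First I would verify that $(\bar f, \bar g, \R^n, \bar \OO) \in \adm_{\R^n}$: the map $\bar g$ is trivially orientation true since $\R^n$ is simply connected; the coincidence set $\{j(x)\}$ is compact and makes $\bar g$-coherence vacuous; and consistency of $\bar \OO$ is a hypothesis. The paragraph preceding the lemma then gives $\iota(\bar f, \bar g, \R^n, \bar \OO) = \ind(\bar f, \bar g, \R^n, \bar \OO)$, so it suffices to show $\iota(f, g, U, \OO) = \iota(\bar f, \bar g, \R^n, \bar \OO)$.

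Choose the reference tuple $(\bar c, \bar e, \R^n, \bar \OO)$ with $\bar c$ constant at $h(y)$ and $\bar e \colon p \mapsto p + (h(y) - j(x))$ a translation. Its only coincidence is $j(x)$, and since translation preserves the standard orientation of $\R^n$ and $\bar \OO$ is consistently oriented, $\sign(\bar e, \bar \OO(j(x), h(y))) = 1$. Set $c = h^{-1} \circ \bar c \circ j$ (constant at $y$) and $e = h^{-1} \circ \bar e \circ j$ (an embedding $U \to W$ sending $x$ to $y$). Using $\bar \OO(j(x), h(y)) = [j_*(O_x), h_*(O_y)]$ together with the functoriality identity $\bar e_* = h_* \circ e_* \circ j_*^{-1}$, a short calculation yields $\sign(e, \OO(x, y)) = \sign(\bar e, \bar \OO(j(x), h(y))) = 1$. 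Hence the Normalization axiom gives $\iota(c, e, U, \OO) = 1$ and $\ind(\bar c, \bar e, \R^n, \bar \OO) = 1$.

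To chain the equalities, construct an admissible homotopy in $\R^n$ from $(\bar f, \bar g)$ to $(\bar c, \bar e)$, then pull it back via $(j^{-1}, h^{-1})$ to an admissible homotopy in $\adm$ from $(f, g)$ to $(c, e)$. Since $j$ and $h$ are homeomorphisms, compactness of the coincidence set along the homotopy passes back and forth, and the $G$-relatedness of the orientations is preserved (indeed, because the coincidence set remains the single stationary point $j(x)$ throughout, $G$-relatedness reduces to transport around a loop based at $h(y)$ in the simply-connected space $\R^n$, which is trivial on orientations). The Homotopy axiom then chains
\[ \iota(f, g, U, \OO) = \iota(c, e, U, \OO) = 1 = \ind(\bar c, \bar e, \R^n, \bar \OO) = \ind(\bar f, \bar g, \R^n, \bar \OO). \]

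The main obstacle is constructing the admissible homotopy in $\R^n$ from $(\bar f, \bar g)$ to $(\bar c, \bar e)$. A naive linear interpolation can produce unwanted coincidences far from the origin, since $\R^n$ is unbounded. The standard remedy is to first modify $\bar f, \bar g$ outside a large ball around $j(x)$ so that they already agree with $\bar c, \bar e$ there, and only then apply straight-line interpolation inside. Verifying that no new coincidences develop at any time $t \in [0,1]$, and that compactness of the resulting coincidence set in $\R^n \times [0,1]$ is maintained throughout the deformation, is the technical core of the argument.
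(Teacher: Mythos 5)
There is a fatal gap at precisely the step you defer as the ``technical core.'' If an admissible homotopy in $\R^n$ from $(\bar f,\bar g)$ to your reference pair $(\bar c,\bar e)$ existed, then (since $\ind$ satisfies the homotopy axiom) it would force $\ind(\bar f,\bar g,\R^n,\bar \OO)=\ind(\bar c,\bar e,\R^n,\bar \OO)=1$, and your chain concludes $\iota(f,g,U,\OO)=1$. But the lemma makes no assumption whatsoever on the value of the index at the isolated coincidence point: for instance $\bar g=\id$ and $\bar f(z)=z-z^2$ on $\R^2\cong\C$ have the single coincidence $0$, where the classical index is $2$. So the homotopy you need cannot exist in general, and the failure sits exactly in the part you left unverified: when you modify $\bar f,\bar g$ outside a large ball so as to agree with $(\bar c,\bar e)$ and then interpolate, coincidences necessarily appear and escape to infinity during the cut-off stage, so the coincidence set of the homotopy in $\R^n\times[0,1]$ is not compact and admissibility fails --- and it must fail, because the index is invariant under admissible homotopies. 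Reducing a pair by admissible homotopy to (constant, embedding) near a coincidence point is legitimate only after the coincidence has first been made regular, so that its index is $\pm 1$; that is how the paper uses this trick later (for degenerate points in the proof of Theorem \ref{nonotrueuniqueness}), but it cannot establish the present lemma, whose whole point is to identify $\iota$ with $\ind$ at a point of arbitrary index.

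The paper's own proof avoids normalizing the value altogether. It conjugates by the charts $j,h$ to get a bijection $\omega$ from the tuples supported inside $U$ (with consistently oriented transported orientation) onto $\adm_{\R^n}$, checks that $\iota\circ\omega^{-1}$ satisfies the additivity, homotopy and weak normalization axioms of \cite{stae08a} --- the only delicate point being weak normalization, where the paper's Normalization axiom together with a sign computation of the kind you do correctly carry out for $(c,e)$ is used --- and then invokes the uniqueness of the index on $\adm_{\R^n}$ to conclude $\iota\circ\omega^{-1}=\ind$, hence $\iota(f,g,U,\OO)=\ind(\bar f,\bar g,\R^n,\bar\OO)$ whatever that integer happens to be. Your opening reduction (that $\iota$ agrees with $\ind$ on $\adm_{\R^n}$) is fine, but the remaining equality $\iota(f,g,U,\OO)=\iota(\bar f,\bar g,\R^n,\bar\OO)$ is precisely the naturality content of the lemma; it needs this transported-axioms argument, not a homotopy of both sides to a normalized pair.
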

\begin{proof}
Throughout, we will use the bar to indicate application of $j$ and $h$ appropriately as in the definitions of $\bar f, \bar g$, and $\bar \OO$.
Let $\mathcal U \subset \adm$ be the set of all tuples $(s,t,Z,\mathbb A)$ with $Z \subset U$ and $\bar {\mathbb A}$ oriented consistently. Then there is a bijection $\omega:\mathcal U \to \adm_{\R^n}$ given by $\omega(s,t,Z,\mathbb A) = (\bar s, \bar t, j(Z), \bar {\mathbb A})$, and so
we have a map $\iota \circ \omega^{-1}: \adm_{\R^n} \to \R$. 

We now note that $\iota \circ \omega^{-1}$ satisfies the homotopy, additivity, and weak normalization axioms of \cite{stae08a}, and thus is equal to $\ind$, the unique coincidence index on $\adm_{\R^n}$. The homotopy and additivity axioms are clear, but the weak normalization axiom deserves some comment. 

We must show that $\iota \circ \omega^{-1}(c,\id,\R^n, \mathbb B) = 1$ where $c$ is a constant map with constant value $c\in \R^n$, and $\mathbb B$ is oriented consistently. From the definition of $\omega$ we have 
\[ \omega^{-1}(c,\id,\R^n,\mathbb B) = (h^{-1}\circ c \circ j, h^{-1}\circ j, U, \mathbb A),\]
where $\bar {\mathbb A} = \mathbb B$. The tuple on the right above has $j^{-1}(c)$ as its only coincidence point, with common value $h^{-1}(c)$. The first map is a constant and the second is an embedding, so the Normalization axiom will apply to show that $\iota(h^{-1}\circ c \circ j, h^{-1}\circ j, U, \mathbb A) = 1$, provided that
$\sign(h^{-1}\circ j,\mathbb A(j^{-1}(c),h^{-1}(c)) = 1$.

Let $\mathbb A(j^{-1}(c), h^{-1}(c)) = [O_{j^{-1}(c)}, O_{h^{-1}(c)}]$, and we will show that $(h^{-1}\circ j)_* O_{j^{-1}(c)} = O_{h^{-1}(c)}$, and thus the above sign is 1. Since $\bar {\mathbb A} = \mathbb B$, we have $\mathbb B(c,c) = [j_*(O_{j^{-1}(c)}), h_*(O_{h^{-1}(c)})]$, and since $\mathbb B$ is oriented consistently this means $j_*(O_{j^{-1}(c)})= h_*(O_{h^{-1}(c)})$. Now we compute
\[ (h^{-1}\circ j)_* (O_{j^{-1}(c)}) = h^{-1}_* (j_*(O_{j^{-1}(c)})) = h^{-1}_* (h_*(O_{h^{-1}(c)})) = O_{h^{-1}(c)} \]
as desired.

We have shown that $\iota\circ \omega^{-1}:\adm_{\R^n} \to \R$ satisfies the three axioms of \cite{stae08a}, and thus it equals $\ind$, the unique coincidence index on $\R^n$. Thus we have $\iota(f,g,U,\OO) = \ind \circ \omega(f,g,U,\OO) = \ind(\bar f, \bar g, j(U), \bar \OO)$ as desired.
\end{proof}

For our uniqueness theorem, we will make use of Lemma 15 from \cite{stae07a}, showing that our tuples
may be changed by admissible homotopy to have isolated coincidences:

\begin{lem}\label{approx}
Let $(f,g,U,\OO) \in \adm$, and let $V \subset U$ be an
open subset containing $\Coin(f,g,U)$ with compact closure $\bar V
\subset U$. Then $(f,g,V,\OO)$ is admissably homotopic to some
$(f',g',V,\OO')$, where $f'$ and $g'$ have isolated coincidence
points in $V$.
\end{lem}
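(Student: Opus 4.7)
The plan is to use a small perturbation of $g$ to move the coincidence set into general position, obtaining only isolated coincidences, while keeping the perturbation so small that it cannot push any coincidence through the boundary of $V$. The orientation $\OO'$ will then be dictated by Lemma~\ref{hrel}, so the real work is constructing the homotopy.

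First I would shrink the working set. Since $\Coin(f,g,U)$ is a compact subset of the open set $V$, there is an open $W$ with $\Coin(f,g,U)\subset W\subset \bar W\subset V$. The compact annulus $K=\bar V\setminus W$ is disjoint from $\Coin(f,g)$, so after fixing a metric on $N$ there is some $\varepsilon>0$ with $d(f(x),g(x))>2\varepsilon$ for every $x\in K$. This $\varepsilon$ controls how far we may push $g$ without creating new boundary coincidences during the homotopy.

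Next I would produce the approximation. Cover the compact set $\bar V$ by finitely many coordinate charts in $M$ whose images under $f$ and $g$ lie in coordinate charts of $N$; using a partition of unity subordinate to this cover, together with PL (or, equivalently, simplicial) approximation in each chart, I can find a map $g'\colon \dom g\to N$ that agrees with $g$ outside a compact neighborhood of $\bar V$, satisfies $\sup_{x\in\bar V}d(g(x),g'(x))<\varepsilon$, and is in general position with $f$ on $\bar V$. Because $f$ and $g'$ are PL maps of $n$-manifolds on $\bar V$, their coincidence set there is a $0$-dimensional compact subcomplex, hence finite. I would then take $G$ to be the straight-line homotopy from $g$ to $g'$ carried out inside a tubular/normal neighborhood of the image (so that it is well defined and continuous), extended by the constant homotopy outside a compact set. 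By construction $d(f(x),G(x,t))>\varepsilon$ on $K$ for all $t$, so $\Coin(f,G,V\times[0,1])\subset W\times[0,1]$, which has compact closure in $V\times[0,1]$, proving that the pair $(F,G)=(\text{const. homotopy of }f,\,G)$ is admissible.

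Finally, Lemma~\ref{othtp} ensures that $g'$ is still orientation true, so by Lemma~\ref{hrel} there is a unique $g'$-coherent orientation $\OO'$ of $\C(f,g')$ which is $G$-related to $\OO$. The tuple $(f,g',V,\OO')$ is then admissibly homotopic to $(f,g,V,\OO)$ and $f,g'$ have isolated (indeed finite) coincidences in $V$, as required. The only real obstacle is the approximation step: one has to be careful that the PL approximation is genuinely small in the $C^0$ sense on all of $\bar V$ and that the local straight-line homotopy makes global sense on the manifold, which is why I work inside coordinate (or normal) neighborhoods. Everything else is forced by the preceding machinery.
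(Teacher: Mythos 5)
The paper itself gives no proof of Lemma \ref{approx}: it is imported verbatim as Lemma 15 of \cite{stae07a}, where it is proved by a chart-by-chart transversality argument. Your skeleton matches the spirit of that argument: isolate the coincidence set inside $W$ with $\bar W\subset V$, use the positive gap $d(f,g)>2\varepsilon$ on the compact set $\bar V\setminus W$ to keep a $C^0$-small straight-line homotopy of $g$ admissible, and then get $\OO'$ from Lemmas \ref{othtp} and \ref{hrel}. The admissibility estimate and the orientation step are correct.

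The genuine gap is in the step that is supposed to make the coincidences isolated. You deduce finiteness from ``$f$ and $g'$ are PL maps of $n$-manifolds in general position,'' but you never modified $f$; it is only continuous, so the statement is false as written. Moreover, in this paper the manifolds are merely topological, so neither a global PL structure nor a tubular/normal neighbourhood of the image (which your straight-line homotopy invokes) is available in general, and a partition-of-unity sum of chartwise approximations is not even defined when nearby points of $\bar V$ are mapped into different charts of $N$. Finally, even for two genuinely PL maps, ``general position of the pair with respect to the diagonal of $N\times N$'' is precisely the transversality assertion that needs proof; it is not a formal consequence of each map separately being PL. The standard repair, and what the cited Lemma 15 of \cite{stae07a} (in the style of Furi--Pera--Spadini) actually does, is a finite induction over Euclidean charts covering $\bar V$: in each chart one considers the local difference of $g$ and $f$ in $\R^n$, smooths or approximates it and subtracts a small regular value supplied by Sard's theorem, so that only $g$ is perturbed and the local coincidence set becomes the preimage of a regular value, hence isolated; one must then check, using a bump function and a smallness estimate, that each successive perturbation neither creates boundary coincidences nor destroys the isolated coincidences already obtained on previously treated charts. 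As written, your proposal asserts the conclusion of this central step rather than proving it.
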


Our strategy for the main result is to change the maps by homotopy, first so that they have isolated coincidence points, and then use charts and Lemma \ref{naturality} to determine the index.

\begin{thm}\label{otrueuniqueness}
There is at most one function $\iota: \adm \to \R$ which
satisfies the axioms.
\end{thm}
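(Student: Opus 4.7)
The plan is to show that any $\iota$ satisfying the three axioms is completely determined, by reducing the computation of $\iota(f,g,U,\OO)$ to a finite sum of values of the known unique index $\ind$ on $\adm_{\R^n}$. The overall strategy has three stages: shrink to a controlled neighborhood of the coincidence set, perturb to isolated coincidences, and then compute point-by-point via a chart.

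First, given $(f,g,U,\OO)\in\adm$, choose an open $V$ with $\Coin(f,g,U)\subset V$ and $\bar V\subset U$ compact; the Excision property (already derived from Additivity) gives $\iota(f,g,U,\OO)=\iota(f,g,V,\OO)$. By Lemma \ref{approx}, there is an admissibly homotopic tuple $(f',g',V,\OO')$ such that $f'$ and $g'$ have only isolated coincidence points in $V$; since $\Coin(f',g',V)$ is compact, it is finite, say $\{x_1,\dots,x_k\}$ with $y_i=f'(x_i)=g'(x_i)$. The Homotopy axiom then yields $\iota(f,g,V,\OO)=\iota(f',g',V,\OO')$.

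Next, choose pairwise disjoint open euclidean neighborhoods $U_i\ni x_i$ inside $V$ so that $f'(U_i)\cup g'(U_i)$ is contained in a euclidean neighborhood $W_i$ of $y_i$. Iterating the Additivity axiom (by induction on $k$, splitting off one coincidence point at a time) gives
\[ \iota(f',g',V,\OO')=\sum_{i=1}^{k}\iota(f',g',U_i,\OO'). \]
For each $i$, choose homeomorphisms $j_i:U_i\to\R^n$ and $h_i:W_i\to\R^n$ which carry the representatives $O_{x_i}$ and $O_{y_i}$ of $\OO'(x_i,y_i)=[O_{x_i},O_{y_i}]$ to the standard orientation of $\R^n$ at $j_i(x_i)$ and $h_i(y_i)$ respectively. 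Since the standard orientation of $\R^n$ is preserved under path transport, the induced orientation $\bar\OO'$ of $\C(\bar f',\bar g')$ is oriented consistently, so Lemma \ref{naturality} applies and gives
\[ \iota(f',g',U_i,\OO')=\ind(\bar f',\bar g',\R^n,\bar\OO'). \]
The right hand side is uniquely determined because $\ind$ is the unique coincidence index on $\adm_{\R^n}$. Combining the displayed equalities determines $\iota(f,g,U,\OO)$ solely from the axioms.

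The real work has already been done in the cited lemmas: Lemma \ref{approx} handles the qualitative reduction to isolated coincidences, and Lemma \ref{naturality} translates the axioms on $\adm$ into those on $\adm_{\R^n}$ where uniqueness is known from \cite{stae08a}. The only remaining delicate point in the argument above is verifying that charts can be chosen so that the transferred orientation $\bar\OO'$ is consistently oriented; this is straightforward because the standard orientation on $\R^n$ is globally defined, so fixing the pushforwards of $O_{x_i}$ and $O_{y_i}$ to be the same standard generator of $H_n(\R^n,\R^n-\{\cdot\};\Z)$ makes $\bar\OO'(j_i(x_i),h_i(y_i))=[e,e]$ and hence consistently oriented along any path.
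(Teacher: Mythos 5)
Your proposal is correct and follows essentially the same route as the paper's proof: excise, homotope to isolated coincidences via Lemma \ref{approx}, split by Additivity into single-point euclidean neighborhoods, and evaluate each term with Lemma \ref{naturality} using charts adjusted (by an orientation-reversing homeomorphism if needed) so the transferred orientation is consistent, reducing everything to the unique index $\ind$ on $\adm_{\R^n}$. The extra care you take with the compact-closure set $V$ needed for Lemma \ref{approx} and with why the pushed-forward orientation is consistently oriented only makes explicit what the paper leaves implicit.
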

\begin{proof}
Let $(f,g,U, \mathbb  O) \in \adm$, and let $\iota:\adm \to \R$ satisfy the axioms. By the Homotopy axiom and Lemma \ref{approx}, we may 
assume without loss of generality that $\Coin(f,g,U)$ is a finite set
of isolated coincidence points. For each such point $x$, let $U_x$ be a neighborhood of $x$ containing no other coincidences. Then by the additivity axiom we have
\[ \iota(f,g,U,\OO) = \sum_{x\in \Coin(f,g,U)} \iota(f,g,U_x,\OO). \]

Take a particular coincidence point $x \in \Coin(f,g,U)$, and we will show that the axioms alone determine the value of $\iota(f,g,U_x,\OO)$. Let $y=f(x)=g(x)$, and let $\OO(x,y) = [O_x,O_y]$.
Since $\Coin(f,g,U_x)$ is a single coincidence point, we may shrink $U_x$ by the excision property and assume that there is a euclidean neighborhood $W_x$ containing $f(U_x) \cup g(U_x)$ and 
homeomorphisms $h:U_x \to \R^n$ and $j:W_x \to \R^n$ with $h(x)=j(y)=0$. Furthermore we may assume (perhaps by composing with an orientation reversing selfmap of $\R^n$) that $h_*(O_x) = j_*(O_y) = O_0$, where $O_0$ is the standard local orientation of $\R^n$ at the origin. Let $\bar f_x = j \circ f \circ h$ and $\bar g_x = j \circ g \circ h$, and define $\bar \OO$ as in Lemma \ref{naturality}, and we have
\[ \iota(f,g,U_x,\OO) = \ind(\bar f_x, \bar g_x, \R^n, \bar \OO). \]
Thus the value of $\iota(f,g,U_x,\OO)$ is unique, and summing over $x$ shows that $\iota(f,g,U,\OO)$ is unique.
\end{proof}

The proof above immediately leads to some corollaries.
First, note that $\iota$ is computed as a sum of certain values of the classical coincidence index $\ind$. Since this index is integer valued, we have:
\begin{cor}\label{zvalued}
Any function $\iota:\adm \to \R$ satisfying the axioms will have values in $\Z$.
\end{cor}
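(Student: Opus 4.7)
The plan is to reuse directly the formula produced inside the proof of Theorem \ref{otrueuniqueness}. Given any $(f,g,U,\OO) \in \adm$, that proof first invokes the Homotopy axiom and Lemma \ref{approx} to reduce to the case in which $\Coin(f,g,U)$ is a finite set of isolated points; then by the Additivity axiom one has
\[ \iota(f,g,U,\OO) = \sum_{x \in \Coin(f,g,U)} \iota(f,g,U_x,\OO), \]
and Lemma \ref{naturality} (applied with charts chosen so that the pushed-forward orientation is consistently oriented) identifies each local summand with $\ind(\bar f_x, \bar g_x, \R^n, \bar \OO)$.

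Since $\ind$ is the classical integer-valued coincidence index on subsets of $\R^n$ (established in \cite{stae08a}), each of the finitely many summands lies in $\Z$, and therefore so does $\iota(f,g,U,\OO)$. There is essentially no new work: the integrality is just the observation that in the uniqueness proof $\iota$ was expressed as a finite sum of values of the $\Z$-valued function $\ind$, so the only thing one really needs to cite is that $\ind$ itself is integer-valued on $\adm_{\R^n}$.
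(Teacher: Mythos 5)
Your proposal is correct and follows essentially the same route as the paper: the uniqueness argument in Theorem \ref{otrueuniqueness} already writes $\iota(f,g,U,\OO)$ as a finite sum of values $\ind(\bar f_x,\bar g_x,\R^n,\bar\OO)$ of the classical $\R^n$ index, and integrality of $\ind$ gives the claim. Nothing further is needed.
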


Note that if we interchange the roles of $f$ and $g$ (provided that both are orientation true and $\OO$ is both $f$- and $g$-coherent) the value of $\iota$ will be computed as
\[ \iota(g,f,U,\OO) = \sum_{x \in \Coin(f,g)} \ind(\bar g_x, \bar f_x, \R^n, \bar \OO). \]
A well-known formula for the coincidence index for orientable manifolds (in this case, it is just the Lefschetz number) gives $\ind(\bar g_x, \bar f_x,\R^n, \bar \OO) = (-1)^n \ind(\bar f_x, \bar g_x, \R^n, \bar \OO)$, and thus we have
\begin{cor}\label{swap}
Let $\iota:\adm \to \R$ satisfy the axioms, and let $(f,g,U,\OO) \in \adm$ such that $(g,f,U,\OO)$ is also in $\adm$. Then
\[ \iota(f,g,U,\OO) = (-1)^n \iota(g,f,U,\OO). \]
\end{cor}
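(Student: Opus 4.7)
The plan is to mirror the proof of Theorem \ref{otrueuniqueness}, applying the same reduction simultaneously to both tuples $(f,g,U,\OO)$ and $(g,f,U,\OO)$, and then invoke the classical swap formula for the coincidence index on $\R^n$ pointwise.

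First I would use Lemma \ref{approx} to replace $(f,g,U,\OO)$ by an admissibly homotopic tuple with isolated coincidences. Because the hypothesis says both tuples lie in $\adm$, both $f$ and $g$ are orientation true, and $\OO$ is simultaneously $f$-coherent and $g$-coherent; in particular the same pair of homotopies witnesses admissibility of the swapped tuple as well, so the Homotopy axiom lets us assume that $\Coin(f,g,U) = \Coin(g,f,U)$ is a finite set of isolated points. For each such $x$ choose a neighborhood $U_x$ containing no other coincidence; by the Additivity axiom applied to both orderings,
\[ \iota(f,g,U,\OO) = \sum_{x} \iota(f,g,U_x,\OO), \qquad \iota(g,f,U,\OO) = \sum_{x} \iota(g,f,U_x,\OO). \]

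Next, at each coincidence point $x$ with $y=f(x)=g(x)$ and $\OO(x,y)=[O_x,O_y]$, I would choose euclidean neighborhoods $U_x$ of $x$ and $W_x$ of $y$ containing $f(U_x)\cup g(U_x)$, together with homeomorphisms $h:U_x\to\R^n$, $j:W_x\to\R^n$, arranged (as in the proof of Theorem \ref{otrueuniqueness}, possibly after composing with an orientation-reversing selfmap of $\R^n$) so that $h_*(O_x)=j_*(O_y)=O_0$. Then the pushed-forward orientation $\bar\OO$ is consistently oriented, and the same choice of $h,j$ works for the swapped ordering. Lemma \ref{naturality} then gives
\[ \iota(f,g,U_x,\OO) = \ind(\bar f_x, \bar g_x, \R^n, \bar\OO), \qquad \iota(g,f,U_x,\OO) = \ind(\bar g_x, \bar f_x, \R^n, \bar\OO). \]

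At this point I invoke the classical identity for the coincidence index on $\R^n$, namely $\ind(\bar g_x,\bar f_x,\R^n,\bar\OO) = (-1)^n \ind(\bar f_x,\bar g_x,\R^n,\bar\OO)$ (which follows from the corresponding symmetry of the local coincidence degree, or equivalently from the well-known $(-1)^n$ symmetry of the Lefschetz coincidence number for maps of orientable manifolds of the same dimension). Summing over $x\in\Coin(f,g,U)$ then yields $\iota(f,g,U,\OO)=(-1)^n\iota(g,f,U,\OO)$.

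The main obstacle, which is essentially resolved by the hypotheses, is to check that a single choice of charts and a single $\bar\OO$ can serve both orderings, so that the swap can genuinely be applied locally; this works precisely because $\OO$ is assumed to be coherent for both $(f,g)$ and $(g,f)$, so $[O_x,O_y]$ plays a symmetric role. The remaining nontrivial input is the classical $\R^n$ swap identity, which is external to this paper but is entirely standard.
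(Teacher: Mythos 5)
Your proposal is correct and follows essentially the same route as the paper: the paper obtains Corollary \ref{swap} by noting that the proof of Theorem \ref{otrueuniqueness} computes both $\iota(f,g,U,\OO)$ and $\iota(g,f,U,\OO)$ as sums of classical $\R^n$ indices $\ind(\bar f_x,\bar g_x,\R^n,\bar\OO)$ and $\ind(\bar g_x,\bar f_x,\R^n,\bar\OO)$ over the isolated coincidence points, and then applies the well-known identity $\ind(\bar g_x,\bar f_x,\R^n,\bar\OO)=(-1)^n\ind(\bar f_x,\bar g_x,\R^n,\bar\OO)$. Your added care that one set of homotopies, charts, and the orientation $\OO$ serves both orderings is just a more explicit version of the paper's parenthetical remark that both maps are orientation true and $\OO$ is both $f$- and $g$-coherent.
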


Another corollary which we can obtain from the proof of Theorem \ref{otrueuniqueness} is that the value of $\iota$ does not depend on the domain of the maps. The value of $\iota(f,g,U,\OO)$ depends only on the local behavior of $f$ and $g$ near the coincidence points. Thus we have the following, referred to in \cite{fps04} as the ``Localization property".
\begin{cor}[Localization property]\label{localization}
Let $(f, g, U, \OO) \in \adm$ with $\dom f = \dom g = V$, and let $(f|_W, g|_W, U, \OO|_W) \in \adm$ be the tuple of the restrictions to an open connected set $W$ with $U \subset W \subset V$.
If $\iota:\adm \to \R$ satisfies the axioms, then 
\[ \iota(f,g,U,\OO) = \iota(f|_W, g|_W, U, \OO|_W). \]
\end{cor}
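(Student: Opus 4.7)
The strategy is to rerun the proof of Theorem \ref{otrueuniqueness} in parallel for both tuples, observing that every step of that proof depends only on data visible inside $U$. Since $U \subset W \subset V$, the triples $(f,g,\OO)$ and $(f|_W,g|_W,\OO|_W)$ agree pointwise on $U$ and share the same coincidence set there, so the resulting sums of local indices will be identical.

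Concretely, I would first invoke Lemma \ref{approx} on $(f,g,U,\OO)$, producing an open set $V'$ with $\Coin(f,g,U) \subset V' \subset \overline{V'} \subset U$ and an admissible pair of homotopies $(F,G)\colon V \times [0,1] \to N$ to a tuple $(\tilde f, \tilde g, V', \tilde\OO)$ whose coincidences in $V'$ are finite and isolated. The restriction $(F|_{W\times[0,1]}, G|_{W\times[0,1]})$ is still an admissible homotopy for $(f|_W, g|_W, V', \OO|_W)$, because $V'\subset W$ gives $\Coin(F|_W, G|_W, V'\times[0,1]) = \Coin(F, G, V'\times[0,1])$. Applying the Homotopy axiom and excision on both sides reduces the problem to proving
\[
\iota(\tilde f, \tilde g, V', \tilde\OO) = \iota(\tilde f|_W, \tilde g|_W, V', \tilde\OO|_W),
\]
where both tuples have the same finite isolated coincidence set in $V'$.

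For each such coincidence point $x$ I would shrink to a euclidean neighborhood $U_x \subset V'$ containing only $x$, with $\tilde f(U_x)\cup \tilde g(U_x)$ inside a euclidean neighborhood of $y=\tilde f(x)=\tilde g(x)$. Additivity then expresses each side as a sum over $x$ of $\iota(\tilde f, \tilde g, U_x, \tilde\OO)$ (respectively with the restrictions), and Lemma \ref{naturality} rewrites each summand as a classical index $\ind(\bar{\tilde f}_x, \bar{\tilde g}_x, \R^n, \bar{\tilde\OO})$ after passing through suitable charts. Because $U_x \subset W$, the chart data for the restricted tuple is literally identical to that for the unrestricted one, so the classical indices coincide termwise and the sums agree.

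The only delicate bookkeeping is checking that $\OO|_W$ really is a $g|_W$-coherent orientation of $\C(f|_W,g|_W)$ and that the restricted homotopy remains admissible. The latter is immediate from $V'\subset W$. The former uses connectedness of $W$ (to supply witnessing paths) together with Lemma \ref{anypath}: since $g$ is orientation true, the coherence condition is path-independent, so coherence of $\OO$ along paths in $V$ restricts without loss to coherence along paths in $W$. I do not expect any serious obstacle beyond these routine checks.
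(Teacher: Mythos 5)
Your proposal is correct and takes essentially the same route as the paper, which obtains the Localization property as a direct consequence of the proof of Theorem \ref{otrueuniqueness}: the index is computed there purely from local data (charts and classical indices $\ind$) near the coincidence points, which is exactly what your parallel run of that proof with the restricted tuple makes explicit. The one routine check you leave implicit is that ``admissibly homotopic'' for the restricted tuples also requires $\tilde{\OO}|_W$ to be $G|_W$-related to $\OO|_W$; this follows from the path-independence established in the proof of Lemma \ref{hrel} together with connectedness of $W$, in the same way as your coherence check for $\OO|_W$.
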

We point out the above specifically because the Localization property will no longer hold when we drop the orientation true assumption.

Recall by Proposition \ref{twoorientations} that there are exactly two orientations of $\C(f,g)$ when $g$ is orientation true. If $\OO$ is one such orientation, let $-\OO$ be the other one. By the proof above, the value of $\iota$ is computed using Lemma \ref{naturality}, and the value depends on the classical coincidence index on $\R^n$. It is not hard to check that changing $\OO$ to $-\OO$ has the effect of reversing the sign of this value. Thus we have
\begin{cor}\label{flipsign}
If $\iota$ satisfies the axioms, then
\[ \iota(f,g,U,\OO) = -\iota(f,g,U,-\OO). \]
That is, the absolute value of $\iota$ does not depend on the choice of orientation $\OO$.
\end{cor}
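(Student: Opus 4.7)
The plan is to imitate the reduction used in Theorem \ref{otrueuniqueness}. By the Homotopy axiom combined with Lemma \ref{approx}, I may assume $\Coin(f,g,U)$ is a finite set of isolated points, and by the Additivity axiom it suffices to prove the sign flip at a single isolated coincidence $x$ with $y=f(x)=g(x)$, on a euclidean neighborhood $U_x$ whose image under $f$ and $g$ lies in a euclidean neighborhood $W$ of $y$.

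The first step is to unpack what $-\OO$ looks like in terms of representatives. Using the transport formula for $g$-coherence, if $\OO(x_0,y_0)=[O_{x_0},O_{y_0}]$ and I declare $(-\OO)(x_0,y_0)=[O_{x_0},-O_{y_0}]$, then coherence propagates this to $(-\OO)(x',y')=[\lambda(O_{x_0}),g(\lambda)(-O_{y_0})]=[\lambda(O_{x_0}),-g(\lambda)(O_{y_0})]$ for any path $\lambda$ from $x_0$ to $x'$. So at every coincidence point, the only difference between $\OO$ and $-\OO$ is a flip of the codomain representative: if $\OO(x,y)=[O_x,O_y]$ then $(-\OO)(x,y)=[O_x,-O_y]$.

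Now fix charts $j:U_x\to\R^n$ and $h:W\to\R^n$ with $j(x)=h(y)=0$ and $j_*(O_x)=h_*(O_y)=O_0$, as in the proof of Theorem \ref{otrueuniqueness}. Writing $\bar f=h\circ f\circ j^{-1}$ and $\bar g=h\circ g\circ j^{-1}$, Lemma \ref{naturality} immediately yields
\[ \iota(f,g,U_x,\OO)=\ind(\bar f,\bar g,\R^n,[O_0,O_0]). \]
The same charts will not work for $-\OO$, since they produce the inconsistent class $[O_0,-O_0]$. I therefore keep $j$ and replace $h$ with $r\circ h$, where $r:\R^n\to\R^n$ is a fixed orientation-reversing linear automorphism (say reflection in a coordinate hyperplane). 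Then $(r\circ h)_*(-O_y)=r_*(-O_0)=O_0$, so the induced orientation for $-\OO$ becomes the consistent class $[O_0,O_0]$, and Lemma \ref{naturality} gives
\[ \iota(f,g,U_x,-\OO)=\ind(r\circ\bar f,\,r\circ\bar g,\,\R^n,\,[O_0,O_0]). \]

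The proof is completed by the standard identity
\[ \ind(r\circ\bar f,\,r\circ\bar g,\,\R^n,[O_0,O_0])=-\ind(\bar f,\bar g,\R^n,[O_0,O_0]), \]
which is a property of the classical coincidence index on $\R^n$: at an isolated coincidence the index equals a local degree of $\bar f-\bar g$, and since $r$ is linear, $r\circ\bar f-r\circ\bar g=r\circ(\bar f-\bar g)$; multiplicativity of local degree together with $\deg(r)=-1$ supplies the sign change. Summing over $\Coin(f,g,U)$ then gives $\iota(f,g,U,\OO)=-\iota(f,g,U,-\OO)$. The main obstacle is not deep: it is the bookkeeping required to modify the chart so that Lemma \ref{naturality} applies to $-\OO$ with a consistent induced orientation; once that is done, the sign reversal of the classical index in $\R^n$ under an orientation-reversing post-composition is elementary.
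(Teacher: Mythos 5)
Your proposal is correct and follows essentially the same route as the paper: the paper's proof of this corollary is exactly the reduction through Theorem \ref{otrueuniqueness} (isolated coincidences, charts, Lemma \ref{naturality}) together with the observation that passing from $\OO$ to $-\OO$ reverses the sign of the classical index on $\R^n$, which the paper leaves as "not hard to check." You have simply made explicit the chart adjustment (post-composing $h$ with an orientation-reversing automorphism so that Lemma \ref{naturality} applies to $-\OO$) and the degree-multiplicativity argument for the sign flip, which is a faithful filling-in of the paper's sketch.
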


\subsection{The case where at least one map is orientation true}\label{forgsubsection}
In the construction of our set $\adm$ of tuples, we always require the second map to be orientation true. This is merely a convention, and in this section we briefly discuss the case where the first map, rather than the second, is assumed to be orientation true, and the general setting where at least one map (either one) is orientation true.

Let $\adm'$ be the set of tuples $(f,g,U,\OO)$ where $\Coin(f,g,U)$ is compact, $f$ is orientation true, and $\OO$ is a $f$-coherent orientation of $\C(f,g)$. That is, $(f,g,U,\OO) \in \adm'$ if and only if $(g,f,U,\OO) \in \adm$.
We say that $(f,g,U,\OO), (f',g',U,\OO') \in \adm'$ are admissably homotopic when there are admissible homotopies of $f$ to $f'$ and $g$ to $g'$ and $\OO'$ is $F$-related to $\OO$, where $F$ is the homotopy from $f$ to $f'$. 

Our Additivity and Homotopy axioms in this setting are exactly the same as for $\adm$, but the normalization must change. Our uniqueness result is:
\begin{thm}\label{uniquenessprime}
There is at most one function $\iota':\adm' \to \R$ satisfying three axioms: 
\begin{itemize}
\item (Additivity) 
Given $(f,g,U, \mathbb O)\in \adm'$, if \ $U_1$ and
$U_2$ are disjoint open
subsets of $U$ with $\Coin(f,g, U) \subset U_1 \cup U_2$, 
then
\[ \iota'(f,g,U, \mathbb O) = \iota'(f,g,U_1, \mathbb O) +
\iota'(f,g,U_2, \mathbb O). \]

\item (Homotopy)
If $(f, g, U, \mathbb O) \in \adm'$ and $(f',
g', U, \mathbb O')\in \adm'$ are
admissably homotopic, then
\[ \iota'(f, g, U, \mathbb O) = \iota'(f', g',U, \mathbb O'). \]

\item (Normalization)
Let $(f,c,U,\OO) \in \adm'$ with $c|_U$ a constant map with constant value $c$ and $f|_U$ an embedding with $f(x) = c$. If $\sign(f,\OO(x,c)) = 1$, then 
\[ \iota'(f,c,U,\OO) = (-1)^n. \]
\end{itemize}
\end{thm}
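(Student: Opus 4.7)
The plan is to reduce Theorem \ref{uniquenessprime} to Theorem \ref{otrueuniqueness} by exploiting the natural bijection between $\adm'$ and $\adm$ given by swapping the two maps. Concretely, given a candidate $\iota':\adm' \to \R$ satisfying the three proposed axioms, define $\iota:\adm \to \R$ by
\[ \iota(f,g,U,\OO) = (-1)^n \, \iota'(g,f,U,\OO). \]
This is well-defined: since $(f,g,U,\OO)\in \adm$ means $g$ is orientation true and $\OO$ is $g$-coherent, the swapped tuple $(g,f,U,\OO)$ automatically lies in $\adm'$, with the orientation-true map in the first slot and $\OO$ being $f$-coherent (in the sense of $\adm'$, where $f$ there is our $g$). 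The $(-1)^n$ factor is motivated by the Lefschetz-number formula behind Corollary \ref{swap}, which suggests this is exactly the normalization correction needed.

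Next I would verify that $\iota$ satisfies the three axioms of Theorem \ref{otrueuniqueness}. Additivity and Homotopy are essentially tautological, since disjoint decompositions and admissible homotopies of tuples in $\adm$ correspond under the swap to disjoint decompositions and admissible homotopies of tuples in $\adm'$. The only subtlety in Homotopy is checking that $G$-relatedness of orientations in $\adm$ matches $F$-relatedness (where $F$ is the homotopy of the first coordinate) in $\adm'$ under the swap; but this follows directly from Definition \ref{Hreldefn} applied to the swapped tuple. For Normalization, suppose $(c,g,U,\OO)\in \adm$ with $c$ constant, $g$ an embedding into the constant value $c$, and $\sign(g,\OO(x,c))=1$. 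Then the swapped tuple $(g,c,U,\OO)$ satisfies the hypotheses of the Normalization axiom of Theorem \ref{uniquenessprime}, so $\iota'(g,c,U,\OO) = (-1)^n$, and hence
\[ \iota(c,g,U,\OO) = (-1)^n \cdot (-1)^n = 1, \]
as required.

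Having shown that $\iota$ satisfies the axioms on $\adm$, Theorem \ref{otrueuniqueness} forces $\iota$ to be the unique such function. Since $\iota'(f,g,U,\OO) = (-1)^n \iota(g,f,U,\OO)$ by inverting the defining relation, uniqueness of $\iota$ immediately implies uniqueness of $\iota'$.

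The main thing to watch is the bookkeeping of which map is orientation true and which coherence notion is being used, plus making sure the sign $(-1)^n$ lands correctly in the Normalization verification. There is no new geometric content beyond Theorem \ref{otrueuniqueness}; an alternative but essentially equivalent approach would be to repeat the proof of Theorem \ref{otrueuniqueness} verbatim, reducing via Lemma \ref{approx} and Lemma \ref{naturality} to the $\R^n$ case, and then appeal to the same $\R^n$ uniqueness result from \cite{stae08a} combined with the Lefschetz formula $\ind(\bar g_x,\bar f_x,\R^n,\bar\OO) = (-1)^n\ind(\bar f_x,\bar g_x,\R^n,\bar\OO)$ to absorb the sign. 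The reduction via swapping is cleaner and avoids repeating the whole argument.
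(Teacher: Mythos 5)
Your proposal is correct and follows essentially the same route as the paper: the paper's proof is exactly the one-line reduction $\iota(f,g,U,\OO) = (-1)^n\,\iota'(g,f,U,\OO)$ followed by an appeal to Theorem \ref{otrueuniqueness}, with your verification of the three axioms (including the $(-1)^n\cdot(-1)^n = 1$ normalization check) being the details the paper leaves implicit.
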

\begin{proof}
Let $\iota'$ satisfy the three axioms, and define $\iota:\adm \to \R$ by $\iota(f,g,U,\OO) = (-1)^n \iota'(g,f,U,\OO)$. This function $\iota$ satisfies the axioms of the previous section for maps $\adm \to \R$, and thus must be unique, and this specifies uniquely the function $\iota'$.
\end{proof}

We thus have two functions, $\iota$ and $\iota'$, purporting to be the unique coincidence index on the two sets $\adm$ and $\adm'$. 
By the proof above we have $\iota(f,g,U,\OO) = (-1)^n \iota'(g,f,U,\OO)$, and by Corollary \ref{swap}, if $(f,g,U,\OO) \in \adm \cap \adm'$ we have $\iota(f,g,U,\OO) = (-1)^n \iota(g,f,U,\OO)$. Thus we have
\begin{thm}\label{agreement}
Let $\iota:\adm \to \R$ and $\iota':\adm' \to \R$ satisfy their respective axiom schemes, and let $(f,g,U,\OO) \in \adm \cap \adm'$. Then 
\[ \iota(f,g,U,\OO) = \iota'(f,g,U,\OO). \]
\end{thm}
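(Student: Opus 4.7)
The plan is to chain together the two relations that the paragraph preceding the theorem has already set up. Given $(f,g,U,\OO) \in \adm \cap \adm'$, both $f$ and $g$ are orientation true, and $\OO$ is coherent with respect to each of them. In particular, the tuple $(g,f,U,\OO)$ belongs to $\adm$ (since its second map $f$ is orientation true and $\OO$ is $f$-coherent), and it also belongs to $\adm'$. So every possible swap of the arguments remains admissible in the relevant sense.

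Now I will exploit two identities. First, applying the defining equation from the proof of Theorem \ref{uniquenessprime} not to $(f,g,U,\OO)$ but to the swapped tuple $(g,f,U,\OO) \in \adm$ yields
\[ \iota(g,f,U,\OO) = (-1)^n \iota'(f,g,U,\OO). \]
Second, Corollary \ref{swap} applies to $(f,g,U,\OO) \in \adm$ because its swap $(g,f,U,\OO)$ is also in $\adm$, giving
\[ \iota(f,g,U,\OO) = (-1)^n \iota(g,f,U,\OO). \]
Substituting the first equation into the second gives $\iota(f,g,U,\OO) = (-1)^{2n}\iota'(f,g,U,\OO) = \iota'(f,g,U,\OO)$, as desired.

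The argument is essentially a one-line manipulation of signs, so there is no substantive obstacle; the only thing to verify carefully is the bookkeeping of which tuples lie in $\adm$ versus $\adm'$, which is what forces the assumption $(f,g,U,\OO) \in \adm \cap \adm'$. This is precisely what guarantees that both the swap identity (which requires $(g,f,U,\OO)\in\adm$, i.e.\ $f$ orientation true) and the cross-definition identity from the proof of Theorem \ref{uniquenessprime} are simultaneously available.
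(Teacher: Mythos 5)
Your argument is correct and is essentially the paper's own proof: the paper likewise combines the identity $\iota(\,\cdot\,) = (-1)^n\iota'(\text{swapped tuple})$ coming from the proof of Theorem \ref{uniquenessprime} (via uniqueness on $\adm$) with Corollary \ref{swap}, so the two signs $(-1)^n$ cancel. The only difference is cosmetic — you apply the cross-identity at the swapped tuple $(g,f,U,\OO)$ while the paper states it at $(f,g,U,\OO)$ — and your bookkeeping of which tuples lie in $\adm$ versus $\adm'$ is accurate.
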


Since $\iota$ and $\iota'$ agree on the intersection of their domains, we actually have a unique index defined on the class $\hat \adm = \adm \cup \adm'$ of all tuples where at least one map is orientation true and the orientation is appropriately coherent. 
\begin{cor}
There is at most one function $\hat \iota:\hat \adm \to \R$ satisfying the axioms:
\begin{itemize}
\item (Additivity) 
Given $(f,g,U, \mathbb O)\in \hat \adm$, if \ $U_1$ and
$U_2$ are disjoint open
subsets of $U$ with $\Coin(f,g, U) \subset U_1 \cup U_2$, 
then
\[ \hat \iota(f,g,U, \mathbb O) = \hat \iota(f,g,U_1, \mathbb O) +
\hat \iota(f,g,U_2, \mathbb O). \]

\item (Homotopy)
If $(f, g, U, \mathbb O) \in \hat \adm$ and $(f',
g', U, \mathbb O')\in \hat \adm$ are
admissably homotopic (either as elements of $\adm$ or $\adm'$), then
\[ \hat \iota(f, g, U, \mathbb O) = \hat \iota(f', g',U, \mathbb O'). \]

\item (Normalization)
Let $c|_U$ be a constant map with constant value $c$ and $f|_U$ an embedding with $f(x) = c$ and $\sign(f,\OO(x,c)) = 1$. Then:
\begin{align*}
\hat \iota(c,f,U,\OO) &= 1 \quad \text{for } (c,f,U,\OO) \in \adm, \text{ and} \\
\hat \iota(f,c,U,\OO) &= (-1)^n \quad \text{for } (f,c,U,\OO) \in \adm'.
\end{align*}
\end{itemize}
\end{cor}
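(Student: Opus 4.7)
The plan is to reduce the claim directly to the two uniqueness theorems already established, namely Theorem \ref{otrueuniqueness} for $\adm$ and Theorem \ref{uniquenessprime} for $\adm'$. Since $\hat \adm = \adm \cup \adm'$ by definition, it suffices to show that any $\hat \iota : \hat \adm \to \R$ satisfying the three axioms is uniquely determined on each of $\adm$ and $\adm'$ separately.

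First I would restrict $\hat \iota$ to $\adm$ and check that this restriction satisfies the three axioms of Section \ref{otruesection}. The Additivity and Homotopy axioms carry over verbatim, because an admissible homotopy for tuples in $\adm$ is by definition an admissible homotopy in $\hat \adm$. For the Normalization axiom, the first clause of the Normalization axiom for $\hat \iota$ gives exactly $\hat \iota(c,f,U,\OO) = 1$ for $(c,f,U,\OO) \in \adm$ satisfying the hypotheses, which is precisely the Normalization axiom in Section \ref{otruesection}. Theorem \ref{otrueuniqueness} then forces $\hat \iota |_{\adm}$ to equal the unique function $\iota$ constructed there.

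An identical argument applies to $\adm'$: the second clause of the Normalization axiom for $\hat \iota$ gives $\hat \iota(f,c,U,\OO) = (-1)^n$ on $\adm'$, which is exactly the Normalization axiom of Theorem \ref{uniquenessprime}. Hence $\hat \iota |_{\adm'}$ must equal the unique $\iota'$. Since every tuple in $\hat \adm$ lies in $\adm$ or $\adm'$, the value of $\hat \iota$ is determined on all of $\hat \adm$, which gives uniqueness.

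The only subtle point, though not really an obstacle for uniqueness, is that tuples in $\adm \cap \adm'$ receive prescribed values from both restrictions at once; these two prescribed values agree by Theorem \ref{agreement}, so no inconsistency arises. This compatibility is needed to establish \emph{existence} of a function satisfying the axioms (which is not part of the statement of the corollary), but plays no role in the uniqueness argument above, where we simply observe that whichever set a given tuple belongs to, its image under $\hat \iota$ is forced.
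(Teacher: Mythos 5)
Your proposal is correct and matches the paper's intended argument: the corollary is justified by restricting $\hat\iota$ to $\adm$ and $\adm'$ and invoking Theorems \ref{otrueuniqueness} and \ref{uniquenessprime}, with Theorem \ref{agreement} guaranteeing consistency on the overlap. You also correctly note that the agreement on $\adm\cap\adm'$ is only needed for existence, not for the uniqueness claim itself.
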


\section{The case of arbitrary  maps between arbitrary manifolds
without boundary }\label{nonotruesection}
Now we attempt to mimic the definitions and arguments of the previous section without the assumption that one map is orientation true. 
\subsection{Local orientations in the general case}
We begin as in the orientation-true case by discussing coincidence sets and their local orientations. 
\begin{defn}
Let $\mathcal D_0$ be the set of all tuples $(f,g,U,V)$, where $f,g$ are local maps of $M \to N$ with connected domain $V$, the spaces $M$ and $N$ are manifolds without boundary of the same dimension, the set $U$ is an open subset of $V$, and $\Coin(f,g,U)$ is compact.
\end{defn}

Though the local maps $f,g$ implicitly carry their domain, we include the set $V$ in our tuples for emphasis to distinguish the setting from that the previous section. The fundamental difference in the non-orientation true case is that the index will depend on the domain set $V$. When one map is orientation true, this is not the case by the Localization property.
Without the assumption that $g$ be orientation true, it is possible when $W\subset V$ for the tuples $(f|_W,g|_W,U,W)$, $(f,g,U,V)$ to have different values for the coincidence index.

To emphasise the importance of the domain set $V$ for a tuple $(f,g,U,V)$, we will write $\C(f,g)=\C(f,g,V)$. 
Next we define the concept of {\it degenerate and nondegenerate coincidence points of a  tuple}.
We say that a coincidence $(x,y)\in \C(f,g,V)$ is \emph{degenerate}
when there is a loop $\theta \in \pi_1(V,x)$ with
$f_\#(\theta)=g_\#(\theta)$ and $\sign(\theta) \neq
\sign(g_\#(\theta))$. Otherwise, we say that $(x,y)$ is
\emph{nondegenerate}. Sometimes we will refer to a coincidence point $x\in \Coin(f,g,U)$ as being degenerate or nondegenerate according to the degeneracy of $(x,y)$.
Note that when $g$ is orientation true, all coincidences are nondegenerate.

Let $\C_d(f,g,V)\subset \C(f,g,V)$ be the set of degenerate coincidences, and $\C_n(f,g,V)\subset \C(f,g,V)$ be the nondegenerate coincidences. This partitioning into degenerate and nondegenerate coincidences depends strongly on the domain:
when $W \subset V$, we have $\C_d(f|_W,g|_W,W) \subset \C_d(f,g,V)$ and $\C_n(f,g,V) \subset \C_n(f|_W,g|_W,W)$. 

When $g$ is not assumed to be orientation true, the coherence condition for orientations at points of $\Coin(f,g, U)$ is not as well behaved as in Subsection \ref{otsubsection}. Two orientations $[O_x, O_y]$ and
$[O_{x'},O_{y'}]$ may be $g$-coherent with respect to some paths from $x$
to $x'$, but not with respect to others. 
We will see, however, a weaker notion of coherence can be used among the nondegenerate coincidences. 

For $(f,g,U,V) \in \admm_0$,
given two coincidence points $x_0,x_1 \in \Coin(f,g, U)$, we say they are in the same \emph{coincidence class} if there is a path $\gamma$
in $V$ 
from $x_0$ to $x_1$ with $f(\gamma) \htp g(\gamma)$. Such a path we will call a \emph{Nielsen path}.

Let $(x_0,y_0),(x_1,y_1) \in \C_n(f,g,V)$, with $x_0, x_1$ in the same coincidence class,
and let $O_{x_i}, O_{y_i}$ be local orientations at $x_i$ and
$y_i$. Then we say that $[O_{x_0}, O_{y_0}]$ and $[O_{x_1},
O_{y_1}]$ are \emph{Nielsen coherent} when there is a Nielsen path $\gamma$ from
$x_0$ to $x_1$ with $[O_{x_1},O_{y_1}] = [\gamma(O_{x_0}),
f(\gamma)(O_{y_0})] = [\gamma(O_{x_0}),
g(\gamma)(O_{y_0})]$.

A \emph{Nielsen coherent orientation} of $\C(f,g,V)$ is a function $\OO(x,y) =
[O_x,O_y]$ for which $\OO(x_0,y_0)$ and $\OO(x_1,y_1)$ are Nielsen coherent when $x_0$ and $x_1$ 
are in the same coincidence class.

If $g$ is orientation true and $\OO$ is $g$-coherent, then all coincidences are nondegenerate and by Lemma \ref{anypath} $\OO$ will be Nielsen coherent. Thus Nielsen coherence of an orientation is weaker than $g$-coherence when $g$ is orientation true.
A weaker form of Lemma \ref{anypath} will hold for Nielsen coherence within coincidence classes.
\begin{lem}
When $x_0, x_1$ are nondegenerate coincidence points of a  tuple \break $(f,g,U,V) \in \admm_0$ in the same coincidence class and $[O_{x_0},O_{y_0}]$ and $[O_{x_1}, O_{y_1}]$ are Nielsen coherent, we have $[O_{x_1},O_{y_1}] = [\gamma(O_{x_0}), g(\gamma)(O_{y_0})]= [\gamma(O_{x_0}), f(\gamma)(O_{y_0})]$ for any Nielsen path $\gamma$ from $x_0$ to $x_1$.
\end{lem}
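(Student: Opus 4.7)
My plan is to mirror the proof of Lemma \ref{anypath}, replacing the role of the orientation-true hypothesis with the nondegeneracy of the coincidence $(x_0, y_0)$ applied to the loop built from the two Nielsen paths. The Nielsen-path hypothesis is exactly what is needed to convert ``orientation true on a single loop'' into the full argument of Lemma \ref{anypath}.

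First I would invoke the Nielsen coherence hypothesis to fix some Nielsen path $\lambda$ from $x_0$ to $x_1$ witnessing $[O_{x_1}, O_{y_1}] = [\lambda(O_{x_0}), f(\lambda)(O_{y_0})] = [\lambda(O_{x_0}), g(\lambda)(O_{y_0})]$. Given an alternative Nielsen path $\gamma$ from $x_0$ to $x_1$, form the loop $\theta = \gamma * \lambda^{-1}$ based at $x_0$. Since both $\lambda$ and $\gamma$ are Nielsen, $f(\lambda) \htp g(\lambda)$ and $f(\gamma) \htp g(\gamma)$ rel endpoints, so $f_\#(\theta) = g_\#(\theta)$ in $\pi_1(N, y_0)$. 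This is the key algebraic observation and it is exactly the hypothesis needed to trigger nondegeneracy.

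Now I would apply nondegeneracy of $(x_0, y_0)$: by definition, every loop $\theta$ based at $x_0$ satisfying $f_\#(\theta) = g_\#(\theta)$ must also satisfy $\sign(\theta) = \sign(g_\#(\theta))$, and since the two induced classes coincide we also have $\sign(\theta) = \sign(f_\#(\theta))$. Call this common value $\sigma$. Then, exactly as in Lemma \ref{anypath}, the three sign comparisons
\[ \gamma(O_{x_0}) = \sigma\,\lambda(O_{x_0}), \qquad g(\gamma)(O_{y_0}) = \sigma\,g(\lambda)(O_{y_0}), \qquad f(\gamma)(O_{y_0}) = \sigma\,f(\lambda)(O_{y_0}) \]
all hold with the same $\sigma$, because the first sign is $\sign(\gamma * \lambda^{-1}) = \sign(\theta)$, the second is $\sign(g_\#(\theta))$, and the third is $\sign(f_\#(\theta))$. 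Therefore the pairs $(\gamma(O_{x_0}), g(\gamma)(O_{y_0}))$ and $(\lambda(O_{x_0}), g(\lambda)(O_{y_0}))$ differ by a common factor $\sigma$, and likewise for $f$, so they give the same equivalence class $[O_{x_1}, O_{y_1}]$ as required.

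The only real conceptual step is spotting that $\gamma * \lambda^{-1}$ is precisely the type of loop targeted by the definition of nondegeneracy; once that is noticed, the rest is a routine tracking of three signs and uses nothing more delicate than the definition of $\sign$ on loops. Note that no analog of Proposition \ref{pathprop} is needed in this argument, since $\gamma$ and $\lambda$ share both endpoints, so there is no homotopy between the maps $f$ and $g$ to interpolate; this is why the lemma is strictly easier than Lemma \ref{hrel}.
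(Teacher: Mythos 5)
Your proof is correct and follows essentially the same route as the paper: form the loop $\theta$ from the given Nielsen path and the witnessing Nielsen path, observe $f_\#(\theta)=g_\#(\theta)$, and use nondegeneracy of $(x_0,y_0)$ to get $\sign\theta=\sign g_\#(\theta)=\sign f_\#(\theta)$, after which the sign-tracking of Lemma \ref{anypath} finishes the argument. The paper simply cites the proof of Lemma \ref{anypath} at that point, while you write out the three sign comparisons explicitly; the content is the same.
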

\begin{proof}
Let $\bar \gamma$ be another Nielsen path from $x_0$ to $x_1$, and let $\theta = \bar \gamma * \gamma^{-1}$. The proof of 
Lemma \ref{anypath} will apply provided that $\sign \theta = \sign g(\theta)$. Since $\gamma$ and $\bar \gamma$ are 
both Nielsen paths we will have $f(\theta) \htp g(\theta)$, and thus since $x_0$ and $x_1$ are nondegenerate 
we have $\sign \theta = \sign g(\theta) = \sign f(\theta)$ as desired.
\end{proof}

Thus a Nielsen coherent orientation is uniquely determined once we specify its value at one 
point from each coincidence class. 

Our present setting requires a modified form of homotopy-relatedness.

Let $\OO$ be a Nielsen coherent orientation of $\C(f,g,V)$ associated to  the tuple $(f,g,U,V)$, 
and $\OO'$ be a Nielsen coherent orientation of $\C(f',g',V)$ with $f'\htp f$ and $g' \htp g$. Let
$F$ and $G$ be the homotopies taking $f$ to $f'$ and $g$ to $g'$. Recall that 
if $(x,y)\in \C(f,g,V)$ and $(x',y') \in \C(f',g',V)$ 
and $\gamma$ is a path in $V$ from $x$ to $x'$, then 
\[ \gamma_G(\OO(x,y))= \gamma_G([O_x,O_y]) = [\gamma(O_x), G(\gamma(t),t)(O_y)] \]
is an orientation at $(x',y')$.

\begin{defn}\label{nielsenFGrel}
Let $(f,g,U,V)\in \admm_0$, and let $\OO$ be a Nielsen coherent orientation of $\C(f,g,V)$ and $\OO'$
be a Nielsen coherent orientation of $\C(f',g',V)$ with $f'\htp f$ and $g' \htp g$. Let
$F$ and $G$ be the homotopies taking $f$ to $f'$ and $g$ to $g'$. 

We
say that $\OO'$ is \emph{$(F,G)$-related} to $\OO$ if: whenever $(x,y) \in \C_n(f,g,V)$ and $(x',y')\in \C_n(f',g',V)$ with a path $\gamma$ 
from $x$ to $x'$ in $V$ such that $F(\gamma(t),t) = G(\gamma(t),t)$, we have $\OO'(x',y')=\gamma_F(\OO(x,y)) = \gamma_G(\OO(x,y))$.
\end{defn}

This notion of relatedness is weaker than $G$-relatedness as defined in Definition \ref{Hreldefn}. Specifically, when $g$ is orientation true and $\OO$ is $g$-coherent and $\OO'$ is $g'$-coherent, then if $\OO$ and $\OO'$ are $G$-related, they are automatically $F,G$-related.

\begin{lem}
As described above, $F,G$-relatedness is well defined. That is, it
does not depend on the choice of path $\gamma$.
\end{lem}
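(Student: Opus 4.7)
The plan is to show that if $\gamma$ and $\bar\gamma$ are two paths from $x$ to $x'$ in $V$ both satisfying the hypothesis $F(\gamma(t),t)=G(\gamma(t),t)$ and $F(\bar\gamma(t),t)=G(\bar\gamma(t),t)$, then $\gamma_G(\OO(x,y)) = \bar\gamma_G(\OO(x,y))$; the argument for $\gamma_F$ is identical by symmetry, and the two values agree on each choice of path because the path satisfies $F(\gamma(t),t) = G(\gamma(t),t)$ (so that $F(\gamma(t),t)$ and $G(\gamma(t),t)$ produce the same transport of $O_y$).

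Writing $\OO(x,y)=[O_x,O_y]$, the goal reduces to comparing two signs. Let $\sigma_1=\pm1$ be determined by $\bar\gamma(O_x)=\sigma_1\gamma(O_x)$ and $\sigma_2=\pm1$ by $G(\bar\gamma(t),t)(O_y)=\sigma_2\, G(\gamma(t),t)(O_y)$. Setting $\theta=\bar\gamma*\gamma^{-1}$, a loop at $x$, we immediately have $\sigma_1=\sign(\theta)$. Using Proposition \ref{pathprop} to rewrite $G(\gamma(t),t)\htp g(\gamma)*G(x',t)$ and $G(\bar\gamma(t),t)\htp g(\bar\gamma)*G(x',t)$, the transport discrepancy at $y$ is governed by the loop $G(\bar\gamma(t),t)*G(\gamma(t),t)^{-1} \htp g(\bar\gamma)*g(\gamma)^{-1} = g(\theta)$, so $\sigma_2=\sign(g(\theta))$.

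Thus everything comes down to showing $\sign(\theta)=\sign(g(\theta))$. The main step, and the only real use of the hypothesis $F(\gamma(t),t)=G(\gamma(t),t)$ and $F(\bar\gamma(t),t)=G(\bar\gamma(t),t)$, is to establish that $f_\#(\theta)=g_\#(\theta)$ in $\pi_1(N,y)$. Indeed, since these two homotopies from $f$ to $f'$ and from $g$ to $g'$ have the property that they agree along $\gamma$ and along $\bar\gamma$, the loop $G(\bar\gamma(t),t)*G(\gamma(t),t)^{-1}$ at $y$ coincides with the loop $F(\bar\gamma(t),t)*F(\gamma(t),t)^{-1}$. Applying Proposition \ref{pathprop} to this same loop using $F$ gives $F(\bar\gamma(t),t)*F(\gamma(t),t)^{-1}\htp f(\bar\gamma)*f(\gamma)^{-1}=f(\theta)$, so $f(\theta)\htp g(\theta)$ in $\pi_1(N,y)$.

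Finally, the hypothesis that $(x,y)\in\C_n(f,g,V)$ is nondegenerate is exactly what we need: because $f_\#(\theta)=g_\#(\theta)$, nondegeneracy forces $\sign(\theta)=\sign(g_\#(\theta))$, so $\sigma_1=\sigma_2$. This gives $[\bar\gamma(O_x),G(\bar\gamma(t),t)(O_y)]=[\gamma(O_x),G(\gamma(t),t)(O_y)]$, as required. The main obstacle is the bookkeeping of Proposition \ref{pathprop} to turn both $f(\theta)$ and $g(\theta)$ into the single loop constructed from the pinched homotopy values along $\gamma$ and $\bar\gamma$; once this is done, nondegeneracy closes the argument cleanly.
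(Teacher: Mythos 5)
Your proof is correct and follows essentially the same route as the paper: form the comparison loop $\theta$, use Proposition \ref{pathprop} together with the condition $F=G$ along both paths to get $f_\#(\theta)=g_\#(\theta)$, and then invoke nondegeneracy of $(x,y)$ to conclude $\sign(\theta)=\sign(g(\theta))$. The only difference is that you write out the sign bookkeeping explicitly, whereas the paper delegates that part to the proof of Lemma \ref{hrel}.
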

\begin{proof}
Let $\OO$ be a Nielsen coherent orientation of $\Coin(f,g,V)$ and let $\OO'$ be a
Nielsen coherent orientation of $\Coin(f',g',V)$ which is $F,G$-related to $\OO$. Say that we have points
$(x,y)\in \C_n(f,g,V)$ with $\OO(x,y)=[O_x,O_y]$ and
$(x',y')\in\C_n(f',g',V)$ with a path $\gamma(t)$ in $M$ from $x$ to $x'$ such
that $F(\gamma(t),t) = G(\gamma(t),t)$ for all $t$, and then by
$F,G$-relatedness we have $\OO'(x',y') = [\gamma(O_x),
G(\gamma(t),t)(O_y)]$.

Let $\bar \gamma$ be another path from $x$ to $x'$ with $F(\bar
\gamma(t),t) = G(\bar \gamma(t),t)$, and let $\bar \OO'$ be the same
as $\OO'$, except using $\bar \gamma$ instead of $\gamma$ to define
$\bar \OO'(x',y')$. We will show that $\bar \OO'(x',y') =
\OO'(x',y')$.

The proof of the corresponding statement from Lemma \ref{hrel} uses
the fact that $g$ is orientation true on the loop $\gamma * \bar
\gamma^{-1}$. The same proof will work here provided that we show
$\sign (\gamma * \bar \gamma^{-1}) = \sign(g(\gamma*\bar
\gamma^{-1}))$. 

Let $\lambda = \gamma * \bar \gamma^{-1}$, and using
Proposition \ref{pathprop} gives
\begin{align*}
f(\lambda) &= f(\gamma) * f(\bar \gamma^{-1}) \htp f(\gamma) *
F(x',t) * F(x',t)^{-1} * f(\bar \gamma)^{-1} \\
&\htp F(\gamma(t),t) * F(\bar \gamma(t),t)^{-1}.
\end{align*}
Similarly $g(\lambda) \htp G(\gamma(t),t) * G(\bar \gamma(t),t)^{-1}$, and
so $f(\lambda) \htp g(\lambda)$, which is to say $f_\#(\lambda) =
g_\#(\lambda)$. Since $\lambda$ is a loop at $x$ and $(x,y) \in
\C_n(f,g)$, we have $\sign(\lambda) = \sign(g(\lambda))$ as desired.
\end{proof}

Note that we will not expect an analogue to Lemma \ref{hrel}: if we have a geometrically inessential coincidence class $C$ of $(f',g')$ 
which is related by the homotopies to an empty coincidence class of $(f,g)$, then the points in $C$ can be assigned either orientation 
without violating the $(F,G)$-relatedness criterion. Note that these choices of orientations would not effect the index, because the class is geometrically inessential.

\subsection{Axioms and uniqueness}
 We begin this subsection  defining the tuples for which we will associate an index. 
\begin{defn}
Let $\admm$ be the set of all tuples $(f,g,U,V,\OO)$, where $(f,g,U,V) \in \admm_0$ and $\OO$ is a Nielsen coherent orientation of $\C(f,g,V)$.
\end{defn}

We say that two tuples $(f,g,U,V,\OO), (f',g',U,V,\OO') \in \admm$ are \emph{admissibly homotopic} if there is a pair of admissible (in the sense of Section \ref{otruesection}) homotopies $F,G$ taking $f$ to $f'$ and $g$ to $g'$ and $\OO'$ is $F,G$-related to $\OO$.

Throughout this section we consider functions $\iota:\admm \to \R \oplus \Z_2$. Our main result is that there is a unique such function satisfying certain axioms similar to those used in the previous section. The axioms for this section take the following form:

\begin{axiom}[Additivity axiom]
Given $(f,g,U, V,  \mathbb O)\in \admm$, if $U_1$ and
$U_2$ are disjoint open
subsets of $U$ with $\Coin(f,g,U) \subset U_1 \cup U_2$, 
then
\[ \iota(f,g,U, V, \mathbb O) = \iota(f,g,U_1, V, \mathbb O) +
\iota(f,g,U_2, V, \mathbb O). \]
\end{axiom}

\begin{axiom}[Homotopy axiom]
If $(f, g, U, V, \mathbb O) \in \admm$ and $(f',
g', U, V, \mathbb O')\in \admm$ are
admissably homotopic, then
\[ \iota(f, g, U, V,  \mathbb O) = \iota(f', g',U, V,   \mathbb O'). \]
\end{axiom}

\begin{axiom}[Normalization axiom] 
Let $(c,g,U,V,\OO)\in \admm$ with $c|_U: U \to N$ a constant map $g|_{U}:U \to N$ an embedding with $g(x)=c$ and $\sign(g,\OO(x,c))=1$. Then
\[ \iota(c,g,U,V,\OO) = 
\begin{cases} 
1 \in \Z \quad &\text{if $(x,c) \in \C_n(f,g,M)$}, \\
\bar 1 \in \Z_2 \quad &\text{if $(x,c) \in \C_d(f,g,M)$}. 
\end{cases} \]
\end{axiom}

Exactly as in Section \ref{otruesection} we obtain an excision property.
\begin{prop}[Excision property]
Let $\iota:\adm \to \R$ satisfy the Additivity axiom.
Let $(f,g,U,V,\OO) \in \adm$ and let $U' \subset U$ be an open set with $\Coin(f,g,U) \subset U'$. Then
\[ \iota(f,g,U,V,\OO) = \iota(f,g,U',V,\OO). \]
\end{prop}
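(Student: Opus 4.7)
The plan is to mirror exactly the argument used for the analogous Excision property in Section \ref{otruesection}, since the Additivity axiom takes the same form and the statement is purely formal. The proof uses only the Additivity axiom, and none of the new machinery (degenerate/nondegenerate coincidences, Nielsen coherence, $(F,G)$-relatedness) is involved.

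First I would establish that the index of the empty tuple vanishes. Applying the Additivity axiom to the trivial decomposition $\Coin(f,g,\emptyset) = \emptyset \subset \emptyset \cup \emptyset$ (the two empty sets being disjoint open subsets of $\emptyset$) yields
\[ \iota(f,g,\emptyset,V,\OO) = \iota(f,g,\emptyset,V,\OO) + \iota(f,g,\emptyset,V,\OO), \]
and since the target group $\Z \oplus \Z_2$ is abelian (and cancellative), this forces $\iota(f,g,\emptyset,V,\OO) = 0$.

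Second, I would apply the Additivity axiom to the decomposition $\Coin(f,g,U) \subset U' \cup \emptyset$, where $U'$ and $\emptyset$ are disjoint open subsets of $U$ and the containment holds by hypothesis. This gives
\[ \iota(f,g,U,V,\OO) = \iota(f,g,U',V,\OO) + \iota(f,g,\emptyset,V,\OO), \]
and combining with the first step collapses the second summand, producing the desired equality.

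There is no substantive obstacle: the argument is entirely formal and indifferent to the target group (it works for any abelian group, including both $\R$ from the previous section and $\Z \oplus \Z_2$ here). The only minor point of hygiene is checking that $(f,g,\emptyset,V,\OO) \in \admm$, which is immediate since $\Coin(f,g,\emptyset) = \emptyset$ is trivially compact and the Nielsen coherent orientation $\OO$ restricts vacuously. I would also note in passing that the proposition statement writes ``$\iota:\adm \to \R$'' but this appears to be a residual from the previous section's template; the proof applies verbatim to $\iota:\admm \to \Z \oplus \Z_2$.
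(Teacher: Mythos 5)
Your proof is correct and is essentially the same argument the paper intends: the paper simply says the excision property is obtained ``exactly as in Section \ref{otruesection},'' where the proof is precisely your two steps (additivity on $\emptyset \subset \emptyset \cup \emptyset$ to get $\iota(f,g,\emptyset,V,\OO)=0$, then additivity on $\Coin(f,g,U)\subset U'\cup\emptyset$). Your side remark that the statement's ``$\iota:\adm\to\R$'' is a typographical carry-over from the previous section is also accurate.
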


The proof of the uniqueness in this section is similar to that in the previous section. The analogue of Lemma \ref{naturality} is that the index at a single nondegenerate coincidence point agrees with $\ind$ (the unique coincidence index for selfmaps of $\R^n$) when we compose with charts.

\begin{lem}\label{nondegnaturality}
Let $(f,g,U,V,\OO) \in \admm$ with $\Coin(f,g,U)$ consisting of a single nondegenerate coincidence point $x$ such that $U$ is a euclidean neighborhood of $x$ and $f(U) \cup g(U)$ is contained in a euclidean neighborhood $W$ of $y=f(x)=g(x)$. Let $j:U \to \R^n$ and $h:W \to \R^n$ be homeomorphisms, and let $\bar f = h \circ f \circ j^{-1}$ and $\bar g = h \circ g \circ j^{-1}$. 

Let $\OO(x,y) = [O_x,O_y]$, and 
define an orientation $\bar \OO$ of $\C(\bar f, \bar g)$ by 
$\bar \OO(j(x),h(y)) = [j_*(O_x), h_*(O_y)]$.
If $\bar \OO$ is oriented consistently, and $\iota$ satisfies the three axioms, then 
\[ \iota(f,g,U,V,\OO) = \ind(\bar f, \bar g, \R^n, \bar \OO). \]
\end{lem}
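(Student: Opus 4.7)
The plan is to adapt the proof of Lemma \ref{naturality} almost verbatim, with the extra care that our index now takes values in $\Z \oplus \Z_2$ rather than $\Z$. As before, I would transfer the problem to $\R^n$ via the charts $j$ and $h$, and then appeal to the uniqueness of the classical coincidence index $\ind$ on $\adm_{\R^n}$ from \cite{stae08a}.

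More precisely, let $\mathcal U \subset \admm$ denote the set of tuples $(s,t,Z,V,\mathbb A)$ with $Z \subset U$, $s(Z) \cup t(Z) \subset W$, and $\bar{\mathbb A}$ consistently oriented. There is a bijection $\omega:\mathcal U \to \adm_{\R^n}$ sending $(s,t,Z,V,\mathbb A)$ to $(\bar s, \bar t, j(Z), \bar{\mathbb A})$, and I would define a candidate map $\iota \circ \omega^{-1}:\adm_{\R^n} \to \Z\oplus \Z_2$. I would then verify that this map satisfies the additivity, homotopy, and weak normalization axioms of \cite{stae08a}. Additivity and homotopy transfer immediately since $\omega$ is a bijection of the appropriate data. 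For the weak normalization, given a constant-with-identity pair in $\R^n$, the preimage under $\omega$ is a tuple $(h^{-1}\circ c\circ j,\, h^{-1}\circ j, U, V, \mathbb A)$ whose unique coincidence is $x$, to which our Normalization axiom applies; the sign condition $\sign(h^{-1}\circ j,\mathbb A(\dots))=1$ is verified exactly as in Lemma \ref{naturality}.

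The main obstacle is ensuring we land in the $\Z$ component and not the $\Z_2$ component when invoking our Normalization axiom. This is where nondegeneracy of $x$ is used crucially: since $\R^n$ is simply connected, every loop there is null-homotopic, so every coincidence in $\adm_{\R^n}$ is automatically nondegenerate, and one might be tempted to conclude matters directly in $\R^n$. However, our Normalization axiom is phrased in terms of whether $(x,c) \in \C_n(f,g,M)$ in the original space, so I would invoke the hypothesis that $x$ is a nondegenerate coincidence of the original pair $(f,g)$ in $V$ to conclude that the normalization value at the preimage tuple is $1 \in \Z$ rather than $\bar 1 \in \Z_2$. Hence $\iota \circ \omega^{-1}$ is integer-valued and satisfies the three axioms of \cite{stae08a}.

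By uniqueness of $\ind$ on $\adm_{\R^n}$, we conclude $\iota \circ \omega^{-1} = \ind$, and therefore
\[ \iota(f,g,U,V,\OO) = \ind(\bar f, \bar g, \R^n, \bar \OO) \]
as claimed. The entire argument parallels Lemma \ref{naturality}; the one genuinely new ingredient is verifying that nondegeneracy of the original coincidence $x \in \C_n(f,g,V)$ forces the $\Z$ branch of the Normalization axiom on every tuple in $\omega^{-1}(\adm_{\R^n})$ used to verify the weak normalization property.
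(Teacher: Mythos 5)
Your proposal is correct and follows essentially the same route as the paper: transfer to $\adm_{\R^n}$ via the bijection $\omega$, verify the additivity, homotopy, and weak normalization axioms of \cite{stae08a}, and invoke uniqueness of $\ind$, with the nondegeneracy hypothesis on $x$ used exactly where the paper uses it, namely to ensure the Normalization axiom yields $1\in\Z$ rather than $\bar 1\in\Z_2$ in the weak normalization step. The paper's proof is just a short remark to this effect, so nothing further is needed.
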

\begin{proof}
The proof is almost identical to that of Lemma \ref{naturality}. We will highlight the relevant changes.

Let $\mathcal U \subset \admm$ be the set of all tuples $(s,t,W,V,\mathbb A)$ with $W \subset U$ and $\bar {\mathbb A}$ oriented consistently. Then there is a bijection $\omega:\mathcal U \to \adm_{\R^n}$ given by $\omega(s,t,W,V,\mathbb A) = (\bar s, \bar t, j(W), \bar {\mathbb A})$, and so
we have a map $\iota \circ \omega^{-1}: \adm_{\R^n} \to \R$. 

As in the proof of Lemma \ref{naturality}, the proof is complete when we show that $\iota \circ \omega^{-1}$ satisfies the additivity, homotopy, and weak normalization axioms of \cite{stae08a}.
Additivity and homotopy are clear, and the demonstration of the weak normalization axiom from the proof of Lemma \ref{naturality} also applies in this case, though the usage of the Normalization Axiom is only valid if $x$ is nondegenerate. We have assumed this nondegeneracy as a hypothesis, so the proof carries without further changes.
\end{proof}

Our uniqueness proof for this section follows the proof of Theorem \ref{otrueuniqueness}, but requires some extra argument for the degenerate coincidence points. 

\begin{thm}\label{nonotrueuniqueness}
There is at most one function $\iota:\admm \to \R \oplus \Z_2$ which satisfies the additivity, homotopy, and normalization axioms.
\end{thm}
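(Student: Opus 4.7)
The plan is to mirror the proof of Theorem~\ref{otrueuniqueness}, handling the nondegenerate and degenerate coincidence points separately. First, by the Homotopy axiom together with an analogue of Lemma~\ref{approx} in the present setting, I may assume that $\Coin(f,g,U)$ is a finite set of isolated coincidence points. The Additivity axiom then gives
\[ \iota(f,g,U,V,\OO) = \sum_{x\in \Coin(f,g,U)} \iota(f,g,U_x,V,\OO), \]
where $U_x$ is a small open neighborhood of $x$ containing no other coincidences. Since $\C(f,g,V)=\C_n(f,g,V)\sqcup \C_d(f,g,V)$, it suffices to pin down each summand uniquely according as $x$ is nondegenerate or degenerate.

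For a nondegenerate isolated coincidence point $x$, I would run the argument of Theorem~\ref{otrueuniqueness} essentially verbatim, with Lemma~\ref{nondegnaturality} in place of Lemma~\ref{naturality}. After shrinking $U_x$ to a euclidean neighborhood on which a chart $j$ is defined, with $f(U_x)\cup g(U_x)$ contained in a euclidean neighborhood of $y=f(x)=g(x)$ carrying a chart $h$, and after possibly composing one of the charts with an orientation-reversing selfmap of $\R^n$ so that the transported orientation $\bar\OO$ is consistently oriented, Lemma~\ref{nondegnaturality} yields
\[ \iota(f,g,U_x,V,\OO) = \ind(\bar f_x, \bar g_x, \R^n, \bar \OO) \in \Z, \]
which is uniquely determined by the classical coincidence index on $\adm_{\R^n}$ from \cite{stae08a}.

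For a degenerate isolated coincidence point $x$, the value lies in the $\Z_2$ summand. Here I would use an admissible local homotopy, supported in a small neighborhood of $x$ and keeping $f(x)=g(x)=y$ fixed, to replace $f$ by the constant map $c\equiv y$ and $g$ by a local embedding with $g(x)=y$. Because the homotopy is supported near $x$ and fixes $y$, the basepoint paths $F(x,t), G(x,t)$ are constant, so the induced homomorphisms on $\pi_1(V,x)$ are unchanged and $x$ remains a degenerate coincidence of the new pair. The $F,G$-relatedness condition transports $\OO$ to a Nielsen coherent orientation $\OO'$ of the new tuple, and if necessary we may precompose $g$ with an orientation-reversing local homeomorphism to arrange $\sign(g,\OO'(x,y))=1$; this does not alter the $\Z_2$-component, since $\bar 1=-\bar 1$. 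The Normalization axiom then forces the value $\bar 1\in \Z_2$.

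The main obstacle is this degenerate case, specifically verifying that the local straightening homotopy can be realized admissibly while simultaneously preserving the global degeneracy at $x$ and producing a transported orientation $\OO'$ that is genuinely $F,G$-related in the sense of Definition~\ref{nielsenFGrel}; locality of the homotopy together with Proposition~\ref{pathprop}-style path computations should suffice. Summing the uniquely determined nondegenerate and degenerate contributions over $\Coin(f,g,U)$ then completes the proof.
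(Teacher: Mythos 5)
Your overall strategy is exactly the paper's: isolate the coincidences, use Additivity, handle nondegenerate points via Lemma \ref{nondegnaturality}, and handle degenerate points by deforming the pair locally to a constant plus an embedding so the Normalization axiom applies. However, the step you yourself flag as ``the main obstacle'' is precisely the content of the paper's argument, and in your write-up it remains an assertion rather than a proof. The paper fills it as follows: compose through charts $j,h$ to get $\bar f,\bar g$ on $\R^n$; by transversality, change them by admissible homotopies to smooth maps $\bar f',\bar g'$ with finitely many coincidences at each of which $d\bar g'_z-d\bar f'_z$ is nonsingular (choosing the homotopies small enough that these coincidences stay degenerate); after excision, shrink so that $\bar g'-\bar f'$ is a homeomorphism; then use the explicit homotopies $\bar F(z,t)=(1-t)\bar f'(z)$ and $\bar G(z,t)=\bar g'(z)-\bar F(z,1-t)$, whose coincidence set is exactly $\{0\}\times[0,1]$, hence compact, so the homotopy is admissible and ends at the pair $(0,\bar g'-\bar f')$, i.e.\ a constant and an embedding. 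This explicitness matters: a generic ``local straightening homotopy supported near $x$'' is not automatically admissible --- for instance, the naive straight-line homotopy of $f$ to the constant $y$, paired with the constant homotopy of $g$, can produce a coincidence set in $U_x\times[0,1]$ that accumulates at the boundary and fails compactness. So your proposal is correct in outline but has a genuine gap at its crucial step, which is closed only by a construction of this kind.

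Two smaller points. First, to meet the hypothesis $\sign(g',\OO'(x,y))=1$ of the Normalization axiom you propose to ``precompose $g$ with an orientation-reversing local homeomorphism''; that changes the map (and hence the tuple) and is not a move licensed by the axioms. The paper instead builds the sign adjustment into the initial choice of the charts $j$ and $h$, which enter the definition of the straightened map $g'=h^{-1}\circ(\bar g'-\bar f')\circ j$; note also that the Normalization axiom genuinely requires the sign to be $+1$, so one cannot simply wave at $\bar 1=-\bar 1$ to avoid arranging it. Second, your observation that degeneracy is preserved because the basepoint stays fixed is a reasonable way to argue what the paper handles by taking the homotopies sufficiently small; note that the case distinction in the Normalization axiom is applied to the \emph{new} pair $(f',g')$, so some such argument is indeed needed, and the paper's proof (as yours would, once completed) in fact works uniformly whether $(x,y)$ is degenerate or not.
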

\begin{proof}
Let $(f,g,U,V,\OO) \in \admm$, and let $\iota$ satisfy the axioms. As in the proof of Theorem \ref{otrueuniqueness}, we assume that $\Coin(f,g,U)$ is a set of isolated coincidence points. For each $x\in \Coin(f,g,U)$, let $U_x$ be a euclidean neighborhood of $x$ so that $f(U_x) \cup g(U_x)$ is contained in a euclidean neighborhood $W_x$ of $y=f(x)=g(x)$, and we have
\[ \iota(f,g,U,V,\OO) = \sum_x \iota(f,g,U_x,V,\OO). \]

When $(x,y) \in \C_n(f,g,V)$ the value of $\iota(f,g,U_x,V,\OO)$ is determined uniquely by Lemma \ref{nondegnaturality}. Thus we need only show that $\iota(f,g,U_x,V,\OO)$ is unique for $(x,y) \in \C_d(f,g,V)$. This we do by showing that our pair of maps can be changed by admissable homotopies so that locally they consist of a constant and an embedding. Then the Normalization Axiom gives a unique value for the index. (Actually, the following argument will work whether $(x,y)$ is degenerate or not.)

Let $j:U_x \to \R^n$ and $h:W_x \to \R^n$ be embeddings, and let $\bar f = h \circ f \circ j^{-1}$ and $\bar g = h \circ g \circ j^{-1}$. 
By transversality arguments, we may change $\bar f$ and $\bar g$ to maps $\bar f'$ and $\bar g'$ by admissible homotopies so that: $\bar f'$ and $\bar g'$ are smooth, have finitely many coincidence points, and each coincidence point $z \in \Coin(\bar f', \bar g')$ is regular in the sense that $d\bar g'_z - d\bar f'_z$ (the difference of the derivatives at $x$) is nonsingular. Further, we can choose the homotopies to be sufficiently small so that each coincidence point $z\in \Coin(\bar f', \bar g',U_x)$ is degenerate.

To simplify the argument, we assume that $\Coin(\bar f', \bar g')$ is a single point $z$. (If there are more coincidences we isolate them with neighborhoods and use excision.) By excision and the nonsingularity of the derivatives, we may assume that $U$ is suffiently small so that $\bar g' - \bar f'$ is a homeomorphism

Being a selfmap of $\R^n$, the map $\bar f'$ is
homotopic to the constant map at $0$, and so $\bar g' - \bar f'$ is homotopic to $\bar g'$. In fact this homotopy is admissable: Let $\bar F(z,t) = (1-t)\bar f'(z)$ be the homotopy of $\bar f'$ to $0$, and then the homotopy $\bar G(z,t) = \bar g'(z) - \bar F(z,1-t)$ is a homotopy of $\bar g'$ to $\bar g' - \bar f'$ with $\Coin(\bar F, \bar G, \R^n\times [0,1]) = \{0\} \times I$. This coincidence set is compact, and thus the homotopy is admissable.

The homotopies $h^{-1} (\bar F(j(z), t))$ and $h^{-1}(\bar G(j(z),t))$ give  an
admissible homotopy of $(f,g,U,V,\OO)$ to the tuple $(f',g',U,V,\OO')$,
where $f'|_{U_x}$ is the constant map with constant value $y$ and $g'|_{U_x}$
is the homeomorphism $g'|_{U_x}: U_x \to j^{-1}(\R^n)$ given by 
\[ g'|_{U_x} = h^{-1} \circ (\bar g' - \bar f') \circ j, \]
and $\OO'$ is an orientation of $\C(f',g')$ which is related by the homotopies to $\OO$. By perhaps composing with an orientation reversing automorphism of $\R^n$, we can make our initial choices of $j$ and $h$ so that $\sign(g', \OO(x,y)) = 1$.

By the Homotopy Axiom we have
\[ \iota(f,g,U,V,\OO) = \iota(f',g',U,V,\OO'), \]
and by the Normalization Axiom this value is $\bar 1 \in \Z_2$, and thus is unique.
\end{proof}

As in Corollary \ref{zvalued}, we obtain:
\begin{cor}
If $\iota:\admm \to \R \oplus \Z_2$ satisfies the axioms, it has values in $\Z \oplus \Z_2$.
\end{cor}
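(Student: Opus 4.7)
The approach mirrors the derivation of Corollary \ref{zvalued} from Theorem \ref{otrueuniqueness}, but now uses the proof structure of Theorem \ref{nonotrueuniqueness}. The plan is to decompose $\iota(f,g,U,V,\OO)$ into a sum of local contributions and to track which summand of $\R\oplus \Z_2$ each one occupies.

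First, by the Homotopy axiom together with the analogue of Lemma \ref{approx} used implicitly in the proof of Theorem \ref{nonotrueuniqueness}, one may assume that $\Coin(f,g,U)$ is a finite set of isolated coincidence points. Applying the Additivity axiom to small pairwise disjoint euclidean neighborhoods $U_x$ of these points gives
\[ \iota(f,g,U,V,\OO) = \sum_{x \in \Coin(f,g,U)} \iota(f,g,U_x,V,\OO). \]

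Now split the sum according to degeneracy. For each nondegenerate coincidence $(x,y)\in \C_n(f,g,V)$, Lemma \ref{nondegnaturality} identifies the local contribution with $\ind(\bar f, \bar g, \R^n, \bar \OO)$, which lies in $\Z$ because the classical coincidence index on $\R^n$ is integer-valued. For each degenerate coincidence $(x,y) \in \C_d(f,g,V)$, the transversality-plus-translation construction from the proof of Theorem \ref{nonotrueuniqueness} (replacing $\bar g'$ by $\bar g' - \bar f'$ via the admissible homotopies $\bar F(z,t) = (1-t)\bar f'(z)$ and $\bar G(z,t) = \bar g'(z) - \bar F(z,1-t)$) admissibly deforms the tuple on $U_x$ to one consisting locally of a constant map and an embedding, with orientations arranged so that the sign hypothesis of the Normalization axiom is satisfied. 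The Normalization axiom then forces the local contribution to equal $\bar 1 \in \Z_2$.

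Since each summand lies in $\Z \oplus \Z_2 \subset \R \oplus \Z_2$, so does the total, yielding $\iota(f,g,U,V,\OO) \in \Z \oplus \Z_2$. The only delicate point is the appeal to Lemma \ref{approx}: one must check that the perturbation to isolated coincidences can be realized by an admissible homotopy whose induced $(F,G)$-relatedness carries $\OO$ to a Nielsen-coherent orientation $\OO'$ of the perturbed tuple. This is precisely what is already required by and carried out in the proof of Theorem \ref{nonotrueuniqueness}, so no further argument is needed.
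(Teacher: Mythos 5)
Your proposal is correct and follows essentially the same route as the paper, which obtains this corollary directly from the proof of Theorem \ref{nonotrueuniqueness} just as Corollary \ref{zvalued} was obtained from Theorem \ref{otrueuniqueness}: the index decomposes via homotopy, approximation, and additivity into local contributions, which are classical integer-valued indices at nondegenerate points (Lemma \ref{nondegnaturality}) and $\Z_2$-values forced by the Normalization axiom at degenerate points. Your write-up merely makes explicit the bookkeeping that the paper leaves implicit in the phrase ``As in Corollary \ref{zvalued}.''
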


\section{The values of the index}\label{valuessection}
Sections \ref{otruesection} and \ref{nonotruesection} discuss coincidence indices with values in $\Z$ and $\Z\oplus \Z_2$, respectively. 
It is natural, especially in the second case, to wonder why these groups in particular are chosen for the values of the index. 
Local indices with
values in other abelian groups arise in the setting of coincidences of  
maps from a complex into
a manifold of the same dimension in \cite{gonc99}, fixed points of  
fiber preserving maps in \cite{dold74}, and coincidences in positive codimension 
in \cite{kosc04}, \cite{kosc06}.

In this section we show that for maps on manifolds, any index-like functions having values in an abelian group must essentially be the same as our index functions above with values in $\Z$ or $\Z \oplus \Z_2$.

First we consider a setting analogous to Section \ref{otruesection}. Then our uniqueness result is as follows:
\begin{thm}\label{otrueGindex}
Let $G$ be an abelian group, and let $a \in G$. Then there is at most one function $\iota_a: \adm \to G$ satisfying the following axioms:
\begin{itemize}
\item(Additivity)
Given $(f,g,U, \mathbb O)\in \adm$, if \ $U_1$ and
$U_2$ are disjoint open
subsets of $U$ with $\Coin(f,g, U) \subset U_1 \cup U_2$, 
then
\[ \iota_a(f,g,U, \mathbb O) = \iota_a(f,g,U_1, \mathbb O) +
\iota_a(f,g,U_2, \mathbb O). \]

\item (Homotopy)
If $(f, g, U, \mathbb O) \in \adm$ and $(f',
g', U, \mathbb O')\in \adm$ are
admissibly homotopic, then
\[ \iota_a(f, g, U, \mathbb O) = \iota_a(f', g',U, \mathbb O'). \]

\item (Normalization)
Let $(c,g,U,\OO) \in \adm$ with $c|_U$ a constant map with constant value $c$ and $g|_U$ an embedding with $g(x) = c$. If $\sign(g,\OO(x,c)) = 1$, then 
\[ \iota_a(c,g,U,\OO) = a. \]
\end{itemize}
\end{thm}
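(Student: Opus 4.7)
The plan is to follow the proof of Theorem \ref{otrueuniqueness} with $a \in G$ replacing $1 \in \Z$ throughout. Let $\iota : \adm \to \Z$ denote the unique $\Z$-valued index supplied by Theorem \ref{otrueuniqueness}, and for $n \in \Z$ write $n \cdot a$ for the $n$-fold sum in the abelian group $G$. I will prove that $\iota_a(f,g,U,\OO) = \iota(f,g,U,\OO) \cdot a$ for every $(f,g,U,\OO) \in \adm$; this equality immediately yields uniqueness.

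First, I would start with an arbitrary $(f,g,U,\OO) \in \adm$. By the Homotopy axiom together with Lemma \ref{approx}, I can reduce to the case where $\Coin(f,g,U)$ is a finite set of isolated coincidence points, and by Additivity decompose
\[ \iota_a(f,g,U,\OO) = \sum_{x \in \Coin(f,g,U)} \iota_a(f,g,U_x,\OO), \]
where each $U_x$ is a euclidean neighborhood of $x$ whose image lies in a euclidean chart about $y = f(x) = g(x)$. Composing with charts as in Lemma \ref{naturality} reduces the problem to showing that any $G$-valued function $J_a : \adm_{\R^n} \to G$ which satisfies the three axioms with normalization value $a$ must equal $\ind \cdot a$, where $\ind$ is the classical integer coincidence index on $\R^n$ from \cite{stae08a}.

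For the $\R^n$ step, I plan to adapt the $\Z$-valued uniqueness on $\adm_{\R^n}$ from \cite{stae08a}. Using transversality and the Homotopy axiom (just as in the proof of Theorem \ref{nonotrueuniqueness}), I reduce to a single transverse coincidence point $z$ of sign $\sigma = \pm 1$. If $\sigma = +1$, I further homotope the tuple to a constant map paired with a positive-sign embedding and apply Normalization to obtain the value $a$. If $\sigma = -1$, I evaluate the local index via a creation/annihilation argument: there is an admissible homotopy producing a pair of transverse coincidences of opposite signs from an empty coincidence set, so Additivity together with the Homotopy axiom forces the two local indices to sum to $0$, and hence a negative-sign point contributes $-a$. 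Because the argument of \cite{stae08a} uses only the abelian group structure of its target, substituting $a$ for $1$ is valid verbatim, and summing over coincidence points yields $J_a = \ind \cdot a$.

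The hard part will be the negative-sign case in $\R^n$, since the Normalization axiom only fixes the index at positive-sign embeddings. Handling it requires constructing the canceling-pair homotopy so that the coincidence set remains compact in $U \times [0,1]$ throughout; this is the step most sensitive to the passage from the integer-valued to the abelian-group-valued setting, though the underlying construction is the same as in the classical case. Once $\adm_{\R^n}$ uniqueness is established, the chart argument from Lemma \ref{naturality} transfers the equality back to $\adm$, giving $\iota_a(f,g,U,\OO) = \iota(f,g,U,\OO) \cdot a$ as claimed.
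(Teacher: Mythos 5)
Your argument is essentially correct, but it follows a genuinely different route from the paper. The paper explicitly declines to imitate the proof of Theorem \ref{otrueuniqueness}, remarking that no $G$-valued analogue of Lemma \ref{naturality} is available; instead it imports the homotopy from the proof of Theorem \ref{nonotrueuniqueness} (transversality plus the subtraction trick $\bar F(z,t)=(1-t)\bar f'(z)$, $\bar G(z,t)=\bar g'(z)-\bar F(z,1-t)$) to make the pair, near each isolated coincidence point, a constant together with an embedding, and then reads the value off from the Normalization and Additivity axioms; this is shorter and yields at once the corollary that all values lie in $\langle a\rangle$. You instead manufacture the missing $G$-valued analogue of Lemma \ref{naturality} by re-proving the $\adm_{\R^n}$ uniqueness of \cite{stae08a} with target $G$ and normalization value $a$ and then transferring back through charts; this costs a re-examination of the $\R^n$ argument (your observation that it uses only the abelian group structure of the target is the decisive point, and it is right: additivity, homotopy and normalization never divide by integers or use the order structure of $\R$), but it buys the explicit conclusion that $\iota_a$ equals the classical integer index times $a$. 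Your concern about negative-sign points is well founded: the Normalization axiom fixes the value only at positively-signed embeddings, and for a nondegenerate coincidence the sign of the resulting embedding is intrinsic (it is the local index), so it cannot be adjusted by re-choosing charts; your creation/annihilation homotopy, forcing the two local values to sum to $0$ and hence assigning $-a$ to a negative point, is exactly the needed ingredient, and the paper's one-paragraph sketch passes over this point silently, so your treatment is if anything more explicit. One small caveat: Theorem \ref{otrueuniqueness} asserts only uniqueness, not existence, of the integer-valued $\iota$, so phrase your conclusion as ``$\iota_a(f,g,U,\OO)$ is determined by the axioms to be $\bigl(\sum_x \ind(\bar f_x,\bar g_x,\R^n,\bar\OO)\bigr)\cdot a$'' rather than as an identity involving $\iota$; the uniqueness claim is unaffected.
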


The proof of the above cannot follow exactly the proof of Theorem \ref{otrueuniqueness} without a $G$-valued analogue of Lemma \ref{naturality}. But the ideas from the proof of Theorem \ref{nonotrueuniqueness} can apply to change any pair of maps $(f,g)$ by homotopy so that, near each isolated coincidence point, the pair consists locally of a constant and an embedding. In this case the normalization axiom will determine uniquely the values of $\iota_a$. Since the value of $\iota_a$ can always be computed using the additivity and normalization axioms, we obtain
\begin{cor}
Let $G$ be an abelian group with $a\in G$, and let $\iota_a:\adm \to G$ satisfy the axioms of Theorem \ref{otrueGindex}. Then the values of $\iota_a$ are always in $\langle a \rangle$, the subgroup of $G$ generated by $a$.
\end{cor}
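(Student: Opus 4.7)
The plan is to follow the scheme of Theorem \ref{otrueuniqueness}, but to replace the final appeal to the classical $\R^n$ index (which would force values into $\Z$) with the chart-straightening homotopy argument used in the proof of Theorem \ref{nonotrueuniqueness}. This lets the Normalization axiom alone (rather than knowledge of $\ind$) determine each local contribution.

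Starting from $(f,g,U,\OO)\in\adm$, I would first invoke Lemma \ref{approx} together with the Homotopy axiom to reduce to the case that $\Coin(f,g,U)$ consists of finitely many isolated points $x_1,\dots,x_n$. By the Additivity axiom,
\[ \iota_a(f,g,U,\OO) = \sum_{i=1}^n \iota_a(f,g,U_{x_i},\OO), \]
where each $U_{x_i}$ is a small euclidean neighborhood of $x_i$ whose image under $f$ and $g$ lies in a common euclidean neighborhood $W_{x_i}$ of $y_i = f(x_i)=g(x_i)$. Fix charts $j:U_{x_i}\to\R^n$ and $h:W_{x_i}\to\R^n$; by possibly precomposing one of them with an orientation-reversing self-homeomorphism of $\R^n$, we may assume that the pushed-forward orientation $\bar\OO$ is consistently oriented with $\sign = +1$ at the image point.

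Next, for each $i$ I would apply the transversality-plus-subtraction deformation from the proof of Theorem \ref{nonotrueuniqueness}: in the charts, use $\bar F(z,t)=(1-t)\bar f(z)$ and $\bar G(z,t)=\bar g(z)-\bar F(z,1-t)$ to admissibly homotope the pair $(\bar f,\bar g)$ on $U_{x_i}$ to $(0,\bar g - \bar f)$, where (after possibly shrinking) $\bar g - \bar f$ is a homeomorphism with unique coincidence at $j(x_i)$. Pulling back through the charts produces an admissible homotopy of $(f,g,U_{x_i},\OO)$ to a tuple $(f',g',U_{x_i},\OO')$ in which $f'|_{U_{x_i}}$ is constant, $g'|_{U_{x_i}}$ is an embedding, and $\sign(g',\OO'(x_i,y_i'))=+1$. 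The Homotopy axiom then gives $\iota_a(f,g,U_{x_i},\OO) = \iota_a(f',g',U_{x_i},\OO')$, and the Normalization axiom evaluates the right-hand side as exactly $a$. Summing, $\iota_a(f,g,U,\OO)=na \in \langle a\rangle$.

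The step most likely to require care is the sign-matching: the Normalization axiom only fires when $\sign(g',\OO'(x,c))=+1$, so one must verify that the orientation-reversing chart modification can be carried out compatibly with the Nielsen/$g$-coherent orientation $\OO$ carried along the homotopies, exactly as in the passage in the proof of Theorem \ref{nonotrueuniqueness}. Once this is checked, no information about $G$ beyond its group structure is used, so the argument goes through for any abelian $G$ and any chosen normalization value $a$.
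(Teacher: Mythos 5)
Your overall strategy --- reduce to finitely many isolated coincidences via Lemma \ref{approx} and the Homotopy axiom, localize by Additivity, and then use the constant-plus-embedding homotopy from the proof of Theorem \ref{nonotrueuniqueness} so that the Normalization axiom, rather than the classical index $\ind$, evaluates each local contribution --- is exactly the route the paper takes here. But the step you flagged, the sign-matching, is a genuine gap, and it cannot be repaired ``exactly as in the passage in the proof of Theorem \ref{nonotrueuniqueness}.'' In the class $\adm$ an admissible homotopy requires the terminal orientation $\OO'$ to be $G$-related to $\OO$, and by Lemma \ref{hrel} (note $g'$ is again orientation true by Lemma \ref{othtp}) this $\OO'$ is \emph{unique}: you have no freedom to flip it. The freedom exploited in Theorem \ref{nonotrueuniqueness} exists only because the coincidence points treated there are degenerate, where $(F,G)$-relatedness places no constraint on the orientation (and the value lies in $\Z_2$, where the sign is immaterial). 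Composing $j$ or $h$ with an orientation-reversing homeomorphism only lets you assume $\bar\OO$ is consistently oriented; it does not alter $\sign(g',\OO'(x_i,y_i'))$, which is an invariant of the original local tuple and can be $-1$. Indeed, if your claim held, every tuple would satisfy $\iota_a(f,g,U,\OO)=na$ with $n\ge 0$; but two regular coincidence points of opposite sign can be created or removed by an admissible homotopy (e.g.\ the pair $(0,\varphi_t)$ in a chart with $\varphi_t(x)=(x_1^2-t,x_2,\dots,x_n)$), so Additivity and Homotopy force their contributions to cancel, and your computation would give $2a=0$ --- contradicting the case $G=\R$, $a=1$ of Section \ref{otruesection} (compare also Corollary \ref{flipsign}, which shows negative values must occur).

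So at a point where the terminal embedding has sign $-1$ the Normalization axiom does not fire and your argument stops. The corollary still follows once the missing case is supplied: use the creation/cancellation homotopy above, together with Additivity, excision and Normalization, to conclude that a local tuple consisting of a constant and an embedding of sign $-1$ has value $-a$; then every local contribution lies in $\{a,-a\}\subset\langle a\rangle$ and the sum is in $\langle a\rangle$, which is what the paper's (admittedly terse) argument intends when it says the value ``can always be computed using the additivity and normalization axioms.'' A smaller gloss: for merely continuous maps an isolated coincidence need not have $\bar g-\bar f$ a local homeomorphism after shrinking; one must first perform the transversality perturbation, which may split $x_i$ into several regular points, and then isolate and excise them, as in the proof of Theorem \ref{nonotrueuniqueness}.
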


In the case where $a\in G$ has infinite order, the subgroup $\langle a \rangle$ is isomorphic to $\Z$. This is the situation of Section \ref{otruesection}, where $G$ is taken to be $\R$ and $a=1$. Typically it will be most useful in coincidence theory to choose $a$ to have infinite order: if $a$ has some finite order $k$, then it will be possible to have $k$ coincidence points which cannot be removed by homotopies, and yet the value of $\iota_a$ will be $0 \in G$. By the classical theorem of Wecken, this should not be possible when the dimension of the manifolds is not 2. Thus we have:
\begin{thm}
Let $G$ be an abelian group with $a\in G$, and let $\iota_a:\adm \to G$ satisfy the axioms of Theorem \ref{otrueGindex}. Additionally assume that when the domains of $f$ and $g$ are not dimension 2 and $\Coin(f,g,U)$ cannot be made empty by homotopy, the index $\iota_a(f,g,U,\OO)$ is nonzero. 

Then $a$ has infinite order, and so the values of $\iota_a$ are in $\langle a \rangle \cong \Z$.
\end{thm}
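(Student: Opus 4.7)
The plan is a proof by contradiction. I would assume $a$ has finite order $k\ge 1$ and then exhibit one tuple $(f,g,U,\OO)\in\adm$ whose domain has dimension $\neq 2$, whose coincidence set cannot be emptied by any admissible homotopy, yet for which $\iota_a(f,g,U,\OO)=0$. This directly contradicts the added hypothesis and forces $a$ to have infinite order. Combined with the preceding corollary, which already places the image of $\iota_a$ inside $\langle a\rangle$, this yields $\langle a\rangle\cong\Z$.

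For the example I would take $M=N=S^1$, $g=\id$, and $f(z)=z^{k+1}$. Both maps are orientation true since $S^1$ is orientable, so I fix the standard Nielsen coherent orientation $\OO$ and set $U=S^1$. A direct computation shows that $\Coin(f,g,U)=\{z\in S^1:z^{k}=1\}$ is a set of $k$ isolated points, each transverse with classical local coincidence index $+1$.

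To evaluate $\iota_a$ I would follow the reduction used in the proof sketch of Theorem \ref{otrueGindex}: at each isolated coincidence, an admissible homotopy together with a suitable (possibly orientation-reversing) choice of charts places the pair in the Normalization model of a constant together with an orientation-preserving embedding, so each coincidence receives the value $a$. Additivity then gives
\[ \iota_a(f,g,U,\OO)\;=\;\underbrace{a+\cdots+a}_{k\text{ summands}}\;=\;k\,a\;=\;0. \]

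It remains to verify that the $k$ coincidences cannot be removed by admissible homotopy, and this is exactly where the dimension hypothesis enters. The $k$ points fall into pairwise distinct Reidemeister classes for $(f,g)$ (a short calculation shows that $\Reid(f,g)\cong\Z/k\Z$) and each carries nonzero classical local index, so the Nielsen coincidence number satisfies $N(f,g)=k>0$. By the Wecken theorem for coincidences between manifolds of dimension $\neq 2$, $N(f,g)$ equals the minimum coincidence count over admissible homotopies, so $\Coin(f,g,U)$ cannot be emptied. This contradicts the standing assumption on $\iota_a$ and completes the proof. I expect the main obstacle to be precisely this appeal to the Wecken property: the $S^1$ example itself is elementary, but non-removability rests on the nontrivial coincidence version of Wecken, which fails in dimension $2$ and thereby explains why the dimensional restriction in the hypothesis is indispensable.
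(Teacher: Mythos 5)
Your proposal is correct and is essentially the paper's own argument, which the paper only sketches (if $a$ had finite order $k$, one could exhibit $k$ homotopically non-removable coincidence points whose index values sum to $ka=0$, contradicting the nonvanishing hypothesis in dimension $\neq 2$); your $S^1$ example with $f(z)=z^{k+1}$, $g=\id$ is a concrete instantiation of exactly this, and non-removability there needs only the Lefschetz/Nielsen lower bound (or an elementary degree argument), not the full Wecken theorem. One small repair: composing the charts with a reflection cannot change $\sign(g',\OO'(x,c))$, which is intrinsic to the local tuple and to $\OO$ rather than to the charts; instead choose the appropriate one of the two $g$-coherent orientations (by the rotational symmetry all $k$ points carry the same sign, so one choice makes every sign $+1$ and each point contributes $a$), or simply note that a per-point contribution of $-a$ still gives total $-ka=0$, so the contradiction goes through either way.
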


Similar arguments could be made for $G$-valued functions on the classes $\adm'$ and $\hat \adm$.

Now we turn to the class $\admm$. We obtain a uniqueness result, proved analogously to the above, along with a weak characterization of the values of the index:
\begin{thm}\label{nonotrueGindex}
Let $G$ be an abelian group, and let $a,b \in G$. Then there is at most one function $\iota_{a,b}: \admm \to G$ satisfying the following axioms:
\begin{itemize}
\item(Additivity)
Given $(f,g,U, V,\mathbb O)\in \admm$, if \ $U_1$ and
$U_2$ are disjoint open
subsets of $U$ with $\Coin(f,g, U) \subset U_1 \cup U_2$, 
then
\[ \iota_{a,b}(f,g,U, V,\mathbb O) = \iota_{a,b}(f,g,U_1, V,\mathbb O) +
\iota_{a,b}(f,g,U_2, V,\mathbb O). \]

\item (Homotopy)
If $(f, g, U, V,\mathbb O) \in \admm$ and $(f',
g', U, V,\mathbb O')\in \adm$ are
admissibly homotopic, then
\[ \iota_{a,b}(f, g, U,V, \mathbb O) = \iota_{a,b}(f', g',U, V,\mathbb O'). \]

\item (Normalization)
Let $(c,g,U,V,\OO) \in \admm$ with $c|_U$ a constant map with constant value $c$ and $g|_U$ an embedding with $g(x) = c$. If $\sign(g,\OO(x,c)) = 1$, then 
\[ \iota_{a,b}(c,g,U,V,\OO) = \begin{cases} 
a \quad &\text{if $(x,c) \in \C_n(f,g,M)$} \\
b \quad &\text{if $(x,c) \in \C_d(f,g,M)$} \end{cases}
\]
\end{itemize}
Furthermore, the values of such a function $\iota_{a,b}$ are always in $\langle a, b\rangle$, the subgroup of $G$ generated by $a$ and $b$.
\end{thm}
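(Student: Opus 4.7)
The plan is to closely mimic the proof of Theorem \ref{nonotrueuniqueness}, exploiting its observation that the transversality-plus-Normalization argument applies uniformly to all isolated coincidence points. This uniformity is essential here because we have no $G$-valued analogue of Lemma \ref{nondegnaturality}: there is no preexisting ``classical'' $G$-valued index on $\R^n$ to appeal to at nondegenerate coincidences, so every step must rest only on Additivity, Homotopy, and Normalization.

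Given $(f,g,U,V,\OO) \in \admm$ and a candidate $\iota_{a,b}$, first apply Lemma \ref{approx} and the Homotopy Axiom to reduce to the case where $\Coin(f,g,U)$ is finite and isolated. The Additivity Axiom (with Excision) then localizes the problem to computing $\iota_{a,b}(f,g,U_x,V,\OO)$ at each coincidence $x$, on a small euclidean neighborhood $U_x$ mapping into a euclidean neighborhood $W_x$ of $y = f(x) = g(x)$. Choosing charts $j:U_x \to \R^n$ and $h:W_x \to \R^n$, I would run the transversality argument from the proof of Theorem \ref{nonotrueuniqueness} to produce admissible homotopies deforming $(f,g)$ locally to $(f',g')$ with $f'|_{U_x}$ constant equal to $y$ and $g'|_{U_x}$ a homeomorphism onto its image. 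Since degeneracy is invariant under admissible homotopy (the defining sign condition on $\pi_1(V)$-loops is preserved up to conjugation by the basepoint track), the new local coincidences inherit the degeneracy type of the original $x$. The Homotopy Axiom then gives
\[ \iota_{a,b}(f, g, U_x, V, \OO) = \iota_{a,b}(f', g', U_x, V, \OO'), \]
where $\OO'$ is $(F,G)$-related to $\OO$, and when $\sign(g',\OO'(x,y)) = +1$ Normalization outputs $a$ (nondegenerate) or $b$ (degenerate).

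The hard part will be the case $\sign(g',\OO'(x,y)) = -1$, where Normalization gives no direct output. I would resolve this by a local pairing argument inside the chart. One can admissibly homotope through a polynomial family such as $g_\tau(y) = (y_1(y_1-1)+\tau, y_2,\ldots,y_n)$: at $\tau=0$ this pair has two nondegenerate coincidences at $(0,0,\ldots,0)$ and $(1,0,\ldots,0)$ with opposite $\sign$, at $\tau=1/4$ a single (degenerate) coincidence at $(1/2,0,\ldots,0)$, and for $\tau > 1/4$ no coincidences at all. By Homotopy the total index at $\tau=0$ equals the index at $\tau=1/2$, which vanishes by Excision, so Additivity forces the $\sign = -1$ contribution at $(0,\ldots,0)$ to be $-a$. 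A compactly supported local isotopy through orientation-reversing local homeomorphisms then transfers our arbitrary $\sign=-1$ contribution to this model, so the value is $-a$ in the nondegenerate case (and analogously $-b$ in the degenerate case, using a family respecting the $\pi_1(V)$-loops witnessing degeneracy). Each local contribution is therefore uniquely determined by the axioms.

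Summing the local contributions over the finitely many isolated coincidence points expresses $\iota_{a,b}(f,g,U,V,\OO)$ as an explicit $\Z$-linear combination of $a$ and $b$, simultaneously establishing uniqueness and confining every value of $\iota_{a,b}$ to $\langle a,b\rangle$.
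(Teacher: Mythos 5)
Your proposal is correct, and its skeleton is exactly the paper's: reduce by Lemma \ref{approx} and the Homotopy axiom to finitely many isolated coincidences, localize with Additivity/excision, then run the transversality argument from the proof of Theorem \ref{nonotrueuniqueness} to deform each local pair to a constant and an embedding, so that the Normalization axiom (rather than a $G$-valued analogue of Lemma \ref{nondegnaturality}, which is unavailable) dictates the local value; summing the local values gives both uniqueness and the containment in $\langle a,b\rangle$. Where you go beyond the paper is the case $\sign(g',\OO'(x,y))=-1$: the paper's sketch simply says the Normalization axiom ``will determine uniquely the values,'' leaning on the device from Theorem \ref{nonotrueuniqueness} of adjusting the charts by an orientation-reversing automorphism, but at a nondegenerate point that sign is forced by $(F,G)$-relatedness (it is the classical local index $\pm 1$) and cannot be normalized away, so some extra argument really is needed; your creation/cancellation family $g_\tau(y)=(y_1(y_1-1)+\tau,y_2,\dots,y_n)$, which pins the negative-sign contribution to $-a$ (resp.\ $-b$), supplies precisely that missing step, and is in the spirit of the paper's own remark after the theorem about cancelling pairs of degenerate coincidences. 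Two small caveats: your parenthetical ``degenerate'' for the $\tau=1/4$ coincidence means non-regular, not degenerate in the paper's $\pi_1$ sense (harmless, since unused); and the ``isotopy through orientation-reversing local homeomorphisms'' matching an arbitrary negative-sign point to the model should be done after the smooth transversality step (linearize and use connectedness of the orientation-reversing component of $GL_n(\R)$), so as not to invoke deep topological isotopy theorems.
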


We can be more specific about the values of the index above, which must be in $\langle a, b \rangle$. Note that when the domains of $f$ and $g$ are nonorientable, it is possible to construct maps having two degenerate coincidence points which can be combined and removed by homotopy. The sum index of these coincidence points must be $2b$, but this index must also be zero since the coincidences can be removed by homotopy. Thus we have $2b = 0$ in $G$, which means that $b$ has order (at most) 2. Thus similarly to the above we have
\begin{thm}
Let $G$ be an abelian group with $a,b\in G$, and let $\iota_{a,b}:\admm \to G$ satisfy the axioms of Theorem \ref{nonotrueGindex}. Additionally assume that when the domains of $f$ and $g$ are not dimension 2 and $\Coin(f,g,U)$ cannot be made empty by homotopy, the index $\iota_{a,b}(f,g,U,V,\OO)$ is nonzero. 

Then $a$ has infinite order and $b$ has order 2, and so the values of $\iota_{a,b}$ are in $\langle a,b \rangle \cong \Z \oplus \Z_2$.
\end{thm}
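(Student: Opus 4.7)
The plan is to follow the hint sketched just before the theorem statement: construct specific admissible tuples whose indices are forced by the three axioms, and then use the extra essentiality hypothesis to rule out finite order of $a$ and vanishing of $b$.

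First, I would verify $2b = 0$. Take a connected nonorientable manifold $M$ of the appropriate dimension and construct local maps $f,g:M \to M$ whose coincidence set consists of exactly two degenerate points $x_1, x_2$ lying in a common coincidence class, and which cancel through an admissible homotopy traversing an orientation reversing loop. A convenient model is to take $f$ constant and $g$ a small perturbation whose only preimages of the constant value are $x_1, x_2$, arranged so that locally around each $x_i$ the pair reduces, following the proof of Theorem \ref{nonotrueuniqueness}, to a constant map together with an embedding satisfying $\sign(g, \OO(x_i,c))=1$. The Additivity and Normalization axioms then give $\iota_{a,b}(f,g,U,V,\OO)=2b$, while the Homotopy axiom applied to the cancelling homotopy equates this with the index of an empty coincidence set, which is $0$ by additivity. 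Hence $2b=0$.

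Next, I would show that $a$ has infinite order. Suppose for contradiction that $ka = 0$ for some $k \geq 1$. By Wecken's theorem for coincidence theory (the fact that the Nielsen coincidence number is a sharp lower bound in dimension $\neq 2$), I can realize a pair $f,g:M \to N$ with Nielsen coincidence number $k$, each Nielsen class containing a single nondegenerate essential coincidence. The proof of Theorem \ref{nonotrueuniqueness} shows that, after local admissible homotopies and after possibly composing the charts with an orientation reversing automorphism of $\R^n$ to normalize signs, each such coincidence contributes $a$ to the index. Additivity then yields total index $ka = 0$, contradicting the hypothesis that a nonremovable coincidence set has nonzero index. The analogous construction with a single essential degenerate coincidence on a nonorientable manifold of dimension $\neq 2$ forces $b \neq 0$, so $b$ has order exactly $2$.

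Finally, the isomorphism $\langle a,b \rangle \cong \Z \oplus \Z_2$ is a formal consequence: if $ma = b$ for some $m \in \Z$, then $0 = 2b = 2ma$ forces $2m = 0$ (since $a$ has infinite order), hence $m = 0$, which contradicts $ma = b \neq 0$. Thus $\langle a \rangle \cap \langle b \rangle = 0$, and the map $(m, \bar n) \mapsto ma + nb$ is a well defined isomorphism $\Z \oplus \Z_2 \to \langle a,b \rangle$. The main obstacle I expect is the explicit geometric construction of the cancelling pair of degenerate coincidences used to establish $2b = 0$: realizing two degenerate coincidences on a nonorientable manifold that both satisfy the Normalization setup and jointly cancel under an admissible homotopy through an orientation reversing loop requires a careful choice of local model. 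The remainder of the argument is a direct appeal to the axioms together with Wecken-type realization results already standard in the literature.
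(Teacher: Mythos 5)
Your proposal follows essentially the same route as the paper: the paper also obtains $2b=0$ from a cancelling pair of degenerate coincidence points on a nonorientable domain (each contributing $b$ by the Normalization and Additivity axioms, while the Homotopy axiom forces the total to vanish), and rules out finite order of $a$ (and $b=0$) via the Wecken-type essentiality hypothesis, exactly as you do; your explicit verification that $\langle a\rangle\cap\langle b\rangle=0$ and hence $\langle a,b\rangle\cong\Z\oplus\Z_2$ is a detail the paper leaves implicit. The level of detail you defer (the concrete local model for the cancelling degenerate pair, and the realization of $k$ essential coincidences of local index $+1$ in dimension $\neq 2$) is likewise left at sketch level in the paper, so your argument is a faithful reconstruction of its proof.
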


\section{The local Reidemeister trace}\label{rtsection}
Given local maps $f, g$ of $M\to N$ with connected domain $V\subset M$,
we partition the group $\pi_1(N)$ into \emph{Reidemeister
     classes} as follows: two elements $\alpha, \beta \in \pi_1(N)$ are
equivalent if and only if there is some $\gamma \in \pi_1(V)$ with
\[ \alpha = g_\#(\gamma) \beta f_\#(\gamma)^{-1}, \]
where $f_\#,g_\#: \pi_1(V) \to \pi_1(N)$ are the homomorphisms
induced by $f$ and $g$. Let $\Reid(f,g)$ denote the set of
Reidemeister classes defined by $f$ and $g$.

If we fix lifts (local maps) $\lift f,\lift g$ of $\lift M \to \lift N$ to the universal covering space, we can assign a Reidemeister class to each coincidence point. Let $x\in \Coin(f,g)$ with a point $\lift x \in p^{-1}(x)$, where $p:\lift M \to M$ is the covering space projection. Then there will be some $\alpha \in \pi_1(N)$ (now viewing $\alpha$ as a covering transformation) with $\lift x \in \Coin(\alpha \lift f, \lift g)$. This element gives the class $[\alpha] \in \Reid(f,g)$, which we call the Reidemeister class of $x$. The correspondence of coincidence points and Reidemeister classes is well defined with respect to coincidence classes: two points in the same coincidence class will have the same Reidemeister class.

The Reidemeister trace (sometimes called generalized Lefschetz number) is an invariant which captures data concerning both the coincidence index
and the Reidemeister classes of coincidence points.  
In this section we show that the ideas from the previous sections
extend to a uniqueness result for the local Reidemeister trace following the
structure of \cite{stae08a}. The main result (Theorem 3) of that paper
made no explicit use of the orientability or differentiability
hypotheses, using only the uniqueness of the coincidence index.

\subsection{The case where the second map is orientation true}
Let $\lift \adm$ be the set of all tuples $(f,\lift f, g, \lift g, U, V, N, \OO)$
where $f,g:V \subset M \to N$ with $U\subset V$, and $(f,g,U,\OO) \in \adm$ (so $g$ is orientation true), the set $V$ is the connected domain of $f$ and $g$, and $\lift f, \lift g$ (local maps) of $\lift M \to \lift
N$ are lifts of $f$ and $g$ to the universal covering spaces. We
say that two such tuples $(f,\lift f, g, \lift g, U,V,N,\OO)$ and $(f', \lift
f', g', \lift g', U,V,N,\OO')$ are admissably homotopic if there is an
admissible (in the sense of Section \ref{otruesection}) pair of homotopies $F,G$ of $(f,g,U,N,\OO)$ to $(f',g',U,N,\OO')$ 
which lifts to a pair of homotopies of $\lift f$ to $\lift f'$ and
$\lift g$ to $\lift g'$.
The Reidemeister trace is a function $\RT$ defined on $\lift \adm$
which assigns to a tuple $(f, \lift f, g, \lift g, U,V,N,\OO)$ an element of
the abelian  group $\Z\Reid(f,g)$. 
We view this group as the set of finite formal sums of elements of $\Reid(f,g)$ with integer coefficients. Let $\epsilon: \Z\Reid(f,g) \to \Z$ be the sum of the coefficients.

Note that in this section we must keep track of $V$ and $N$ in our tuples. If we have $f,g:M \to N$ and $N$ is an open submanifold of some $N'$, then it is possible for $\pi_1(N) \neq \pi_1(N')$ (and similarly for $\pi_1(V) \neq \pi_1(V')$ if we change the domain $V$), and so the set $\Reid(f,g)$ will depend on the precise choice of the domain and codomain of $f$.

Our uniqueness theorem is as follows:
\begin{thm}\label{otruertuniqueness}
There is at most one function $\RT$ defined on $\lift \adm$
which assigns to a tuple $(f, \lift f, g, \lift g, U,V,N,\OO)$ an element of
the Abelian group $\Z\Reid(f,g)$ and satisfies the following axioms:
\begin{itemize}
\item (Additivity) Given $(f, \lift f, g,
     \lift g, U,V,N,\OO) \in \lift \adm$, if $U_1$ and $U_2$ are disjoint open subsets
     of $U$ with $\Coin(f,g,U) \subset U_1 \cup U_2$, then
\[ \RT(f,\lift f, g, \lift g, U,V,N,\OO) = \RT(f, \lift f,  g, \lift g, U_1,V,N,\OO) +
     \RT(f, \lift f, g, \lift g, U_2,V,N,\OO). \]

\item (Homotopy) If $(f,\lift g, \lift g, U,V,N,\OO), (f',\lift f', g',\lift
     g', U,V,N,\OO') \in \lift \adm$ are admissably homotopic,  then
\[ \RT(f, \lift f, g, \lift g, U,V,N,\OO) = \RT(f', \lift f', g', \lift g',
U,V,N,\OO'). \]

\item (Normalization)
Let $(c,\lift c,g,\lift g,U,V,N,\OO) \in \lift \adm$ with $c|_U$ a constant map and $g|_U$ an embedding with $g(x) = c$. If $\sign(g,\OO(x,c)) = 1$, then 
\[ \epsilon \circ \RT(c,\lift c,g,\lift g,U,V,N,\OO) = 1. \]

\item (Coefficients invariance)
For any $(f, \lift f, g, \lift
g, U,N,\OO) \in \lift \adm$, and any $\alpha, \beta \in \pi_1(N)$ and any manifold $N'$ which contains $N$ as an open submanifold, we have
\[ \epsilon(\RT(f, \lift f, g, \lift g, U,V,N,\OO)) = \epsilon(\RT(f, \alpha \lift f, g,
\beta \lift g, U,V,N',\OO)). \]

\item (Coincidence of lifts) If $[\alpha] \in \Reid(f,g)$ appears with nonzero
     coefficient in $\RT(f,\lift f, g, \lift g, U,V,N,\OO)$, then $\alpha \lift
     f$ and $\lift g$ have a coincidence on $p^{-1}(U)$, where $p:
     \lift M \to M$ is the covering space projection.
\end{itemize}
\end{thm}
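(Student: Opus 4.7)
The strategy is to reduce to the coincidence-index uniqueness of Theorem \ref{otrueuniqueness} via the augmentation $\epsilon: \Z\Reid(f,g) \to \Z$, and to use the Coincidence of lifts axiom to pin down which Reidemeister class each isolated coincidence point contributes to. By the Homotopy axiom together with Lemma \ref{approx}, one may assume $\Coin(f,g,U)$ is a finite set of isolated points; the homotopy lifts, so the same applies to the tuple in $\lift\adm$. Choose pairwise disjoint euclidean neighborhoods $U_x \subset U$ of each $x \in \Coin(f,g,U)$; by Additivity,
\[ \RT(f, \lift f, g, \lift g, U, V, N, \OO) = \sum_x \RT(f, \lift f, g, \lift g, U_x, V, N, \OO), \]
so it suffices to determine each summand.

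Fix a coincidence point $x$, choose a lift $\lift x \in p^{-1}(x)$, and let $\alpha_x \in \pi_1(N)$ be the unique covering transformation with $\lift x \in \Coin(\alpha_x \lift f, \lift g)$; its class $[\alpha_x] \in \Reid(f,g)$ is the Reidemeister class of $x$. By the Coincidence of lifts axiom, any $[\beta]$ appearing with nonzero coefficient in $\RT(f,\lift f,g,\lift g,U_x,V,N,\OO)$ must have $\beta\lift f$ and $\lift g$ coinciding at some point of $p^{-1}(U_x)$; since $x$ is the only coincidence in $U_x$, the standard well-definedness of Reidemeister classes on coincidence points then forces $[\beta]=[\alpha_x]$. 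Hence
\[ \RT(f, \lift f, g, \lift g, U_x, V, N, \OO) = k_x \cdot [\alpha_x] \]
for a single integer $k_x = \epsilon \circ \RT(f, \lift f, g, \lift g, U_x, V, N, \OO)$.

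To pin down $k_x$, define $\hat\iota: \adm \to \Z$ by $\hat\iota(f,g,U,\OO) = \epsilon \circ \RT(f, \lift f, g, \lift g, U, V, N, \OO)$ for any choice of lifts and any ambient codomain $N$. The Coefficients invariance axiom guarantees this does not depend on those choices, so $\hat\iota$ is well defined on $\adm$. Additivity and Homotopy of $\hat\iota$ follow from the corresponding $\RT$ axioms, and the Normalization axiom for $\RT$ yields exactly the Normalization axiom for $\hat\iota$ required by Theorem \ref{otrueuniqueness}. That theorem gives $\hat\iota = \iota$, so $k_x = \iota(f,g,U_x,\OO)$, and summing produces
\[ \RT(f, \lift f, g, \lift g, U, V, N, \OO) = \sum_{x \in \Coin(f,g,U)} \iota(f,g,U_x,\OO) \cdot [\alpha_x], \]
which is entirely determined by the data.

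The main obstacle is the well-definedness of $\hat\iota$: without the Coefficients invariance axiom, $\epsilon \circ \RT$ would depend on the choice of lifts and on the ambient codomain, and there would be no way to descend it to a function on $\adm$, so the reduction to Theorem \ref{otrueuniqueness} would collapse. This is precisely why Coefficients invariance must be imposed as a separate axiom rather than being derivable from the others. Everything else is standard bookkeeping: approximation by isolated coincidences, isolation via additivity, and identification of the Reidemeister class of each point through a chosen lift.
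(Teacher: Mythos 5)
Your proposal is correct and follows essentially the same route as the paper: use the Coefficients invariance axiom to descend $\epsilon \circ \RT$ to a function on $\adm$ satisfying the three axioms of Theorem \ref{otrueuniqueness} (hence uniquely determined), and then use Homotopy, Additivity/excision to isolate coincidence points, with the Coincidence of lifts axiom forcing the contribution at each $U_x$ to be a multiple of the single Reidemeister class $[\alpha_x]$, so that $\RT = \sum_x \iota(f,g,U_x,\OO)\,[\alpha_x]$. The only difference is the order of the two steps, which is immaterial.
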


The proof is analogous to that found in \cite{stae08a}, we will give a sketch:
Letting $\RT$ satisfy the above axioms, the composition $\epsilon \circ \RT: \lift \adm \to \Z$ is independent of the lifts $\lift f$ and $\lift g$ and the codomain $N$ by the coefficients invariance axiom, and so we may regard $\epsilon \circ \RT$ as a function 
on $\adm$. This function satisfies the corresponding axioms of Section \ref{otruesection}, and so by Theorem \ref{otrueuniqueness}, $\epsilon \circ \RT$ is uniquely determined.

Now to show that $\RT$ is unique, we use the Homotopy axiom, and Additivity and excision to form a sum over euclidean neighborhoods containing one coincidence point each. On some such neighborhood, say a neighborhood $U_x$ of a coincidence point $x$, the value of $\RT$ must be some coefficient times the Reidemeister class associated to $x$ (the appearance of any other Reidemeister class would contradict the Coincidence of lifts axiom). By the above paragraph, though, the coefficient must be given by the unique function $\iota$. Thus we have 
\[ \RT(f,\lift f, g, \lift g, U, V,N, \OO) = \sum_{x\in \Coin(f,g,U)} \iota(f,g,U_x,\OO) [x], \]
where $[x]\in \Reid(f,g)$ is the Reidemeister class associated to $x$. The above computation of $\RT$ is derived using only the axioms, and thus establishes its uniqueness.

Note that appropriate versions of Corollaries \ref{swap}, \ref{localization}, and \ref{flipsign} will hold for this Reidemeister trace, by the same arguments. Also, the results of this subsection could be adapted as in Subsection \ref{forgsubsection} to give a unique Reidemeister trace for tuples where the first map is orientation true, or tuples where at least one map is orientation true.

\subsection{The general case}
Now we turn to the case where $g$ is not assumed to be orientation true. Let $\lift \admm$ be the set of tuples $(f,\lift f, g, \lift g, U, V, \OO)$ where $(f,g,U,V,N,\OO) \in \admm$, the manifold $N$ is the codomain of $f$ and $g$, and $\lift f, \lift g$ are lifts of $f$ and $g$. Define admissable homotopy of tuples as above.

In this general case (without any orientation-true assumptions), $\RT(f,\lift f, g, \lift g, U, V, N, \OO)$ has value in the abelian group $(\Z\oplus\Z_2)\Reid(f,g)$. Again, let $\epsilon: (\Z \oplus \Z_2)\Reid(f,g) \to \Z \oplus \Z_2$ be the sum of the coefficients. Our uniqueness result follows from exactly the same arguments as in Theorem \ref{otruertuniqueness}. 

\begin{thm}
There is a unique function $\RT$ defined on $\lift \admm$
which assigns to a tuple $(f, \lift f, g, \lift g, U,V,N,\OO)$ an element of
the Abelian group $(\Z\oplus \Z_2)\Reid(f,g)$ and satisfies the following axioms:
\begin{itemize}
\item (Additivity) Given $(f, \lift f, g,
     \lift g, U,V,N,\OO) \in \lift \admm$, if $U_1$ and $U_2$ are disjoint open subsets
     of $U$ with $\Coin(f,g,U) \subset U_1 \cup U_2$, then
\[ \RT(f,\lift f, g, \lift g, U,V,N,\OO) = \RT(f, \lift f,  g, \lift g, U_1,V,N,\OO) +
     \RT(f, \lift f, g, \lift g, U_2,V,N,\OO). \]

\item (Homotopy) If $(f,\lift g, \lift g, U,V,N,\OO), (f',\lift f', g',\lift
     g', U,V,N,\OO') \in \lift \admm$ are admissably homotopic,  then
\[ \RT(f, \lift f, g, \lift g, U,V,N,\OO) = \RT(f', \lift f', g', \lift g',
U,V,N,\OO'). \]

\item (Normalization)
Let $(c,\lift c,g,\lift g,U,V,N,\OO) \in \lift \admm$ with $c|_U$ a constant map and $g|_U$ an embedding with $g(x) = c$. If $\sign(g,\OO(x,c)) = 1$, then 
\[ \epsilon \circ \RT(c,\lift c,g,\lift g,U,V,N,\OO) = 
\begin{cases} 
1 \in \Z \quad &\text{if $(x,c) \in \C_n(c,g,V)$}, \\
\bar 1 \in \Z_2 \quad &\text{if $(x,c) \in \C_d(c,g,V)$}. 
\end{cases} \]

\item (Coefficients invariance)
For any $(f, \lift f, g, \lift
g, U,V,N,\OO) \in \lift \admm$, and any $\alpha, \beta \in \pi_1(N)$ and any manifold $N'$ which contains $N$ as an open submanifold, we have
\[ \epsilon(\RT(f, \lift f, g, \lift g, U,V,N,\OO)) = \epsilon(\RT(f, \alpha \lift f, g,
\beta \lift g, U,V,N',\OO)). \]

\item (Coincidence of lifts) If $[\alpha] \in \Reid(f,g)$ appears with nonzero
     coefficient in $\RT(f,\lift f, g, \lift g, U,V,N,\OO)$, then $\alpha \lift
     f$ and $\lift g$ have a coincidence on $p^{-1}(U)$, where $p:
     \lift V \to V$ is the covering space projection.
\end{itemize}
\end{thm}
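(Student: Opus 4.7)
The plan is to follow the proof sketch given for Theorem \ref{otruertuniqueness} essentially verbatim, with the necessary modifications to handle values in $(\Z \oplus \Z_2)\Reid(f,g)$ rather than $\Z\Reid(f,g)$, and invoking Theorem \ref{nonotrueuniqueness} rather than Theorem \ref{otrueuniqueness} at the key step.

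First, I would let $\RT$ be any function on $\lift \admm$ satisfying the five axioms and consider the composite $\epsilon \circ \RT: \lift \admm \to \Z \oplus \Z_2$. By the Coefficients invariance axiom, this composite depends only on $(f,g,U,V,\OO)$ and not on the lifts $\lift f, \lift g$ or the precise codomain $N$, so it descends to a well-defined function $\iota: \admm \to \Z \oplus \Z_2$. A routine check shows that this $\iota$ satisfies the Additivity, Homotopy, and Normalization axioms of Section \ref{nonotruesection} (the Normalization axiom translates directly, with the degenerate/nondegenerate cases preserved). By Theorem \ref{nonotrueuniqueness}, $\iota$ is uniquely determined.

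Next, given an arbitrary tuple $(f, \lift f, g, \lift g, U, V, N, \OO) \in \lift \admm$, I would use the Homotopy axiom together with Lemma \ref{approx} to reduce to the case in which $\Coin(f,g,U)$ is a finite set of isolated coincidences. Then, isolating each coincidence point $x$ inside a small euclidean neighborhood $U_x$ containing no other coincidence, the Additivity axiom and excision yield
\[ \RT(f, \lift f, g, \lift g, U, V, N, \OO) = \sum_{x \in \Coin(f,g,U)} \RT(f, \lift f, g, \lift g, U_x, V, N, \OO). \]
The Coincidence of lifts axiom forces each local term $\RT(f, \lift f, g, \lift g, U_x, V, N, \OO)$ to be supported on the single Reidemeister class $[x] \in \Reid(f,g)$ determined by $x$ (since any other class $[\alpha]$ appearing with nonzero coefficient would require $\alpha \lift f$ and $\lift g$ to have a coincidence on $p^{-1}(U_x)$, which, by covering space theory, is excluded for $\alpha$ not in the appropriate coset). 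Thus each local term equals $c_x [x]$ for some coefficient $c_x \in \Z \oplus \Z_2$, and applying $\epsilon$ gives $c_x = \iota(f, g, U_x, V, \OO)$, which is determined by the previous paragraph.

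Combining these facts gives the explicit formula
\[ \RT(f, \lift f, g, \lift g, U, V, N, \OO) = \sum_{x \in \Coin(f,g,U)} \iota(f, g, U_x, V, \OO)\, [x], \]
derived purely from the five axioms, which establishes uniqueness. The only subtle point — and the one I would treat most carefully — is ensuring that the partitioning into degenerate and nondegenerate contributions is consistent with the definition of $\iota$ on $\admm$: a degenerate point contributes a $\Z_2$-coefficient and a nondegenerate point contributes a $\Z$-coefficient, exactly as in the Normalization axiom. Existence, which is not claimed by the previous theorem's sketch, is presumably established by taking the explicit formula above as a definition and verifying the axioms directly, as indicated by the appendix remark in the introduction.
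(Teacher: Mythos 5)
Your proposal is correct and follows essentially the same route as the paper: the paper simply asserts that this theorem "follows from exactly the same arguments as in Theorem \ref{otruertuniqueness}," namely using Coefficients invariance to descend $\epsilon\circ\RT$ to a function on $\admm$ identified via Theorem \ref{nonotrueuniqueness}, then Homotopy, Additivity/excision, and Coincidence of lifts to pin each local term down to $\iota(f,g,U_x,V,\OO)\,[x]$, with existence deferred to the appendix constructions — exactly as you describe.
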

We end the section with a note about the value of the Reidemeister trace. We can be a bit more specific about where this value lies by partitioning the Reidemeister classes into degenerate and nondegenerate classes following \cite{gk91}, Section 2, Definition 1.

\begin{defn} 
Let $(f,g, U,V,N,\OO) \in \admm$. A Reidemeister class $[\alpha]$ is \emph{degenerate} if there exists a loop $\gamma \in \pi_1(V)$ 
 such that $\alpha=g_{\#}(\gamma)\alpha(f_{\#}(\gamma)^{-1})$ and $\sign(\gamma) \neq \sign g_{\#}(\gamma)$ for some $\alpha\in[\alpha]$.
Otherwise we say that $[\alpha]$ is \emph{nondegenerate}.

Denote by $\Reid_d(f,g,V)$ the set of degenerate Reidemeister classes and by
$\Reid_n(f,g,V)$ the set of nondegenerate Reidemeister classes. 
\end{defn}

Our final result is that, in the Reidemeister trace, the nondegenerate Reidemeister classes always appear with integer coefficients, while the degenerate classes always appear with $\Z_2$ coefficients. Since the integer coefficients and mod 2 coefficients appear respectively for nondegenerate and degenerate coincidence points, this essentially means that the degeneracy of points corresponds in the appropriate way with degeneracy of Reidemeister classes.

\begin{thm}
For $(f,\lift f, g, \lift g, U,V,N,\OO) \in \lift \admm$, we have 
\[ \RT(f,\lift f,g,\lift g, U, V, N,\OO) \in \Z\Reid_n(f,g,V) \oplus \Z_2 \Reid_d(f,g,V). \]
\end{thm}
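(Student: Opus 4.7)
The plan is to use the explicit formula for $\RT$ that arises from the uniqueness argument in the preceding theorem. By the Homotopy axiom together with an approximation result like Lemma \ref{approx}, we may assume $\Coin(f,g,U)$ consists of finitely many isolated points. Additivity and excision, combined with the Coincidence of lifts axiom (which forbids other Reidemeister classes from appearing at a single coincidence), then force
\[ \RT(f,\lift f,g,\lift g,U,V,N,\OO) = \sum_{x\in \Coin(f,g,U)} \iota(f,g,U_x,V,\OO)\,[x], \]
where $U_x$ is a small neighborhood containing only the coincidence $x$ and $[x] \in \Reid(f,g)$ is its Reidemeister class. By Theorem \ref{nonotrueuniqueness}, each coefficient $\iota(f,g,U_x,V,\OO)$ lies in $\Z$ when $x$ is nondegenerate and in $\Z_2$ when $x$ is degenerate.

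It therefore suffices to show that a coincidence point $x$ is degenerate (resp.\ nondegenerate) if and only if $[x]$ is a degenerate (resp.\ nondegenerate) Reidemeister class. First I would observe that degeneracy is a Nielsen-class invariant: if $\lambda$ is a Nielsen path from $x$ to $x'$ and $\theta$ is a Nielsen loop at $x$, then $\lambda^{-1}\theta\lambda$ is a Nielsen loop at $x'$ (since $f(\lambda) \htp g(\lambda)$), and the signs $\sign(\theta)$ and $\sign(g_\#(\theta))$ are preserved under conjugation because $\sign$ takes values in the abelian group $\{\pm 1\}$.

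The main step is to match Nielsen-class degeneracy with Reidemeister-class degeneracy. Fix $x$ with a lift $\lift x$ and $\alpha = \alpha_x$ defined by $\lift g(\lift x) = \tau_\alpha \lift f(\lift x)$. Using the equivariance $\lift f \circ \tau_\gamma = \tau_{f_\#(\gamma)} \circ \lift f$ together with the twisted equivariance $\lift g \circ \tau_\gamma = \tau_{\alpha\, g_\#(\gamma)\, \alpha^{-1}} \circ \lift g$ (which reflects the fact that $\lift g$ does not send $\lift x$ to the chosen basepoint $\lift f(\lift x)$ of $\lift N$), a direct calculation shows that $\tau_\theta \lift x \in \Coin(\tau_\alpha \lift f, \lift g)$ if and only if $f_\#(\theta) = g_\#(\theta)$, i.e., $\theta$ is a Nielsen loop at $x$. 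After transporting the basepoint of $\pi_1(V)$ from $v_0$ to $x$ by a path and adjusting the representative of $[\alpha]$ accordingly, a Nielsen-loop witness $\theta$ of the degeneracy of $x$ is converted into a stabilizer element $\gamma \in \pi_1(V,v_0)$ for a representative of $[\alpha]$ in the Reidemeister action, with $\sign(\gamma) \ne \sign(g_\#(\gamma))$ inherited directly from $\sign(\theta) \ne \sign(g_\#(\theta))$ by conjugation invariance of $\sign$. The reverse direction is analogous.

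The main obstacle in filling in this plan is the bookkeeping around basepoint changes between $\pi_1(V,x)$, where Nielsen loops at the coincidence point naturally live, and $\pi_1(V,v_0)$, where the Reidemeister stabilizer is formulated, together with the choice of the appropriate representative $\alpha' \in [\alpha]$. Once the covering-space dictionary is set up properly, the sign identification is automatic and the containment $\RT \in \Z\Reid_n(f,g,V) \oplus \Z_2\Reid_d(f,g,V)$ follows at once from the formula displayed above.
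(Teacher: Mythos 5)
Your proposal follows essentially the same route as the paper: reduce via homotopy, additivity/excision, and the coincidence-of-lifts axiom to the formula $\RT = \sum_x \iota(f,g,U_x,V,\OO)[x]$ with coefficients in $\Z$ or $\Z_2$ by the proof of Theorem \ref{nonotrueuniqueness}, and then prove that $x$ is (non)degenerate if and only if $[x]$ is a (non)degenerate Reidemeister class. The paper carries out your "main step" by comparing terminal points of lifted loops $\alpha\lift f(\lift\theta)$ and $\lift g(\lift\theta)$ rather than by deck-transformation equivariance formulas, but this is only a difference in bookkeeping, and your conjugation-invariance-of-$\sign$ argument for the basepoint transport matches the paper's use of $\theta=\lambda*\gamma*\lambda^{-1}$.
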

\begin{proof}
By the Homotopy and Additivity axioms we may replace $f$ and $g$ by a pair with isolated coincidences and we have
\[ \RT(f,\lift f, g, \lift g, U, V, N, \OO) = \sum_{x\in \Coin(f,g,U)} \iota(f,g,U_x,V,N,\OO) [x], \]
where $U_x$ is a euclidean neighborhood of $x$ containing no other coincidences, and $[x]\in \Reid(f,g)$ is the Reidemeister class associated to $x$.

By the proof of our uniqueness for $\RT$, the coefficient on $[x]$ must be the coincidence index $\iota$ on $U_x$. By the proof of Theorem \ref{nonotrueuniqueness}, this coefficient is in $\Z$ when $x \in \C_n(f,g,V)$, and is in $\Z_2$ when $x\in \C_d(f,g,V)$. (The maps can be changed by admissable homotopy so that each coincidence point has a neighborhood on which the Normalization Axiom applies.)

Thus it suffices to show that $x \in \C_n(f,g,V)$ if and only if $[x] \in \Reid_n(f,g,V)$, which will imply that $x \in \C_d(f,g,V)$ if and only if $[x]\in \Reid_d(f,g,V)$.

The class $[x]$ is the Riedemeister class of the covering transformation $\alpha$ such that $x \in p \Coin(\alpha \lift f, \lift g)$. We begin by showing that if $x \in \C_n(f,g,V)$, then $[x] \in \Reid_n(f,g,V)$. 

Let $[\alpha]=[x]$ be the Reidemeister class of $x$. This means that there is a point $\lift x$ with $p(\lift x) = x$ and $\alpha \lift f(\lift x) = \lift g (\lift x)$. To show that $[\alpha] \in \Reid_n(f,g,M)$, we assume that $\alpha = g_\#(\gamma) \alpha f_\#(\gamma^{-1})$ for some $\gamma \in \pi_1(V,x_0)$, and we will show that $\sign \gamma = \sign g_\#(\gamma)$.

Let $\lambda:[0,1] \to V$ be a path from $x_0$ to $x$.
Let $\theta$ be the loop at $x$ given by $\theta = \lambda * \gamma * \lambda^{-1}$, and let $\lift \theta$ be the lift of $\theta$ with initial point $\lift x$. The terminal point of $\lift \theta$ will be $\gamma \lift x$, viewing $\gamma$ as a covering transformation. 

Then the terminal point of $\alpha \lift f(\lift \theta)$ will be 
\[ \alpha \lift f(\gamma \lift x) = \alpha f_\#(\gamma) \lift f(\lift x) = g_\#(\gamma) \alpha \lift f (\lift x) = g_\#(\gamma) \lift g(\lift x) = \lift g(\gamma \lift x) , \]
which is the terminal point of $\lift g(\lift \theta)$. These paths $\alpha \lift f(\lift \theta)$ and $\lift g(\lift \theta)$, being two paths in the universal cover with the same endpoints, are homotopic. Thus their projections are homotopic, and so $f(\theta) \htp g(\theta)$.

Since $x \in \C_n(f,g,V)$ and $\theta$ is a path at $x$ with $f(\theta) \htp g(\theta)$, we have $\sign \theta = \sign g(\theta)$. Since $\theta = \lambda * \gamma * \lambda^{-1}$, this means that $\sign \gamma = \sign g_\#(\gamma)$ as desired.

Now we assume that $[\alpha] = [x] \in \Reid_n(f,g,V)$, and we will show that $x \in \C_n(f,g,V)$. Let $\theta$ be a loop at $x$ with $f(\theta) \htp g(\theta)$, and we will show that $\sign \theta = \sign g(\theta)$.

As above, choose $\lift x$ with $p(\lift x) = x$ and $\alpha \lift f(\lift x) = \lift g (\lift x)$, and let $\lambda$ be a path from $x_0$ to $x$. Let 
$\gamma = \lambda^{-1} * \theta * \lambda$. Let $\lift \theta$ be the lift of $\theta$ with initial point $\lift x$. Then the terminal point of $\lift \theta$ will be $\gamma \lift x$.

Since $f(\theta) \htp g(\theta)$, the paths $\alpha \lift f(\lift \theta)$ and $\lift g(\lift \theta)$ will be homotopic, and in particular have the same endpoints. The terminal point of the former is $\alpha\lift f(\gamma \lift x)$, while the terminal point of the latter is $\lift g(\gamma \lift x)$. Equating these gives $\alpha f_\#(\gamma) \lift f(\lift x) = g_\# (\gamma) \lift g(\lift x)$, and since $\lift g(\lift x) = \alpha \lift f(\lift x)$ we have $\alpha f_\#(\gamma) \lift f(\lift x) = g_\#(\gamma) \alpha \lift f(\lift x)$, which means $\alpha f_\#(\gamma) = g_\#(\gamma) \alpha$, and thus $\alpha = g_\#(\gamma) \alpha f_\#(\gamma^{-1})$.

Since $[\alpha] \in \Reid_n(f,g,V)$ we have $\sign \gamma = \sign g_\#(\gamma)$. Since $\gamma = \lambda^{-1} * \theta * \lambda$, this means that $\sign \theta = \sign g(\theta)$ as desired.
\end{proof}

As a final note, we point out that the ideas of Section \ref{valuessection} could be extended using the same arguments to specify the values of the Reidemeister trace. Specifically we can show that any appropriate function on $\lift \adm$ with values in $G\Reid(f,g)$ for some abelian group $G$ must always have values in $\langle a \rangle \Reid(f,g) \cong \Z\Reid(f,g)$ for some infinite order element $a\in G$. Similarly, any appropriate function on $\lift \admm$ with values in $G \Reid(f,g)$ must always have values in $\langle a \rangle \Reid_n(f,g,V) \oplus \langle b \rangle \Reid_d(f,g,V)$ where $a\in G$ has infinite order and $b\in G$ has order 2.

\section*{Appendix- Existence}
Here we briefly discuss the existence of the various indices and Reidemeister traces above. 

For the coincidence index $\iota:\adm \to \Z$ of Subsection \ref{otruesection}, the proof of Theorem \ref{otrueuniqueness} gives a construction: choose small Euclidean neighborhoods around each coincidence point (assuming there are finitely many), and use the classical local coincidence index on $\R^n$ after composing through charts. It could be checked that $\iota$ defined in such a way does not depend on the choices of charts, and that it does indeed satisfy the three axioms of Subsection \ref{otruesection}. (These checks, as well as the assumption that the coincidence set be finite, are avoided using a homological construction outlined below.)

A similar strategy suffices to construct the unique function $\iota:\admm \to \Z \oplus \Z_2$.
Let $f,g$ be local maps of $M \to N$, and as before assume that the coincidence set is finite. For each $x\in \Coin(f,g, U)$, we can compute the above classical local index on a euclidean neighborhood of $x$. This computation  
yields an integer whose sign depends on choices of orientations made  
at the point $x$ and the point $f(x)$.
If $U_x \subset U$ is a Euclidean neighborhood of $x$ which contains  
only $x$ as a coincidence point,
this classical index will coincide with what is denoted in our  
notation by $\iota(f,g,U_x,U_x,N,\OO)$, where $\OO$ agrees with the  
orientations chosen above. Note that $\iota(f,g,U_x,U_x,N,\OO)$ is an  
integer (not in $\Z_2$), since $U_x$ is orientable.

The unique function $\iota:\admm \to \Z \oplus \Z_2$ discussed in this paper can be constructed
in terms of the classical indices by the formula
\begin{equation}\label{psum}
\iota(f,g,U,V,N,\OO) =  \left( \sum_{(x,y)\in \C_n(f,g,U)}  
\iota(f,g,U_x,U_x,N,\OO) \right) \oplus q\left( \sum_{(x,y)\in  
\C_d(f,g,U)} \iota(f,g,U_x,U_x,N,\OO) \right),
\end{equation}
where $q:\Z \to \Z_2$ is reduction mod 2.
The procedure above is perhaps the most naive way to define the local  
concepts in question, at least if the number of coincidence points
is finite.

With this approach, it is also straightforward to define a local  
Reidemeister trace in $\Z\Reid_n(f,g,M) \oplus \Z_2\Reid_d(f,g,M)$  
whose image under $\epsilon$ (the sum of the coefficients) is the local coincidence index $\iota$: if the Reidmeister class of  
some coincidence point $x$ is nondegenerate, then the local  
Reidemeister trace is the element of $\Z\Reid_n(f,g,M)$ given by the  
classical index of $x$ times the Reidmeister class of $x$. If this  
class is degenerate, then the local Reidemeister trace is the element  
of $\Z_2\Reid_d(f,g,M)$ given by the mod 2 reduction of the classical  
index of $x$ times the Reidmeister class of $x$.

An alternative approach for these constructions, which will apply in  
the  general case where the set of coincidence points is not finite,  
is via the  homological machinery of \cite{fahuII81}. Certainly
one could deform the maps to make the number of coincidence points  
finite and then define the index in terms of the deformed maps. But  
one would
need to show that the  final result is independent of the deformation.  
This is avoided in the homological approach. 

Given a tuple $(f,\lift f,g,\lift g,U,V,N,\OO) \in \lift \admm$, the obstruction to deform the pair  
to be coincidence free lies (see Theorem 2.9 in  \cite{fahuII81}) in  
the group $H^n_c(V, \mathbb  Z[\pi])$ (cohomology with compact  
support), with $\pi = \pi_1(N)$.
By  Poincar\'e duality this group
  is isomorphic to  the quotient of  $\Z[\pi]$  by the action of  
$\pi_1(V)$ on $\mathrm{Aut}(\Z[\pi])$.
Using this action, which is given in \cite{fahuII81}, it is not  
difficult to show that this quotient is isomorphic to
$\Z\Reid_n(f,g,V) \oplus \Z_2 \Reid_d(f,g,V)$.

Now given $C\subset U$ an isolated set of coincidence points,
we consider the obstruction to deform $f,g:U \to N$ to be coincidence  
free. This is an element $\gamma(f,g)$ of the cohomology $H^n(U, U-C;  
\Z[\pi])$, which
by excision is isomorphic to  $H^n(M, M-C; \Z[\pi])$. Define the class  
$\gamma(f,g; U)\in H^n_c(M, \mathbb \Z[\pi])$ using this isomorphism
and the induced map by the inclusion  $i^*: H^n(M, M-C;   \Z[\pi]) \to  
H^n(M; \Z[\pi])$, which is the local Reidemeister trace. Then the local index  
is the image of  $\gamma(f,g; U)$ in $\mathbb Z \oplus\mathbb Z_2 $ by  
$\epsilon$.

The unique local index $\iota:\admm \to \Z \oplus \Z_2$ discussed in this paper is also closely related to the ``semi-index'' of Dobrenko and Jezierski  
described in \cite{dj93}. We conclude by showing how formula \eqref{psum} for $\iota$ relates to the semi-index.

Recall that when $g$ is orientation true, as we saw in Proposition  
\ref{twoorientations}, a choice of  
orientation at a single coincidence point uniquely determines a  
$g$-coherent orientation of $\C(f,g)$. Thus there are exactly two possible  
$g$-coherent orientations: call them $\OO$ and $-\OO$. From Corollary  
\ref{flipsign}, we have
\[ \iota(f,g,U,N,-\OO) = -\iota(f,g,U,N,\OO). \]

Thus the index in the orientation true case is, up to sign,  
independent of the orientation chosen. Equivalently, we could say that  
the absolute value of the index is independent of the orientation  
chosen.

In the case where $g$ is not orientation true the situation is more  
complicated. We have defined the index in terms of a Nielsen coherent  
orientation, which is uniquely determined when we choose an  
orientation at one point in each coincidence class.

Thus there are perhaps many possible Nielsen coherent orientations,  
and we only expect the index to be independent of orientation (up to  
sign) within Nielsen classes. More precisely, if $\Coin(f,g,U)$ is a  
coincidence class, then there are only two Nielsen coherent  
orientations for $\C(f,g) \cap U \times N$, and they will give opposite values for  
the index.
Thus when $\Coin(f,g,U)$ is a coincidence class, the quantity  
$|\iota(f,g,U,V,\OO)| \in \Z$ will be independent of $\OO$. Here,  
the ``absolute value'' used is
$| \cdot |: \Z \oplus \Z_2 \to \Z$, computed as the  
usual absolute value of the $\Z$ part, plus 1 if the $\Z_2$ part is nontrivial.

This quantity $|\iota(f,g,U,V,N,\OO)|$ (using the above absolute value) coincides  
with the semi-index of \cite{dj93} for the coincidence class  
$\Coin(f,g,U)$. Dobrenko and Jezierski begin by dividing the  
coincidence set (of a regular pair) into ``free elements'' and pairs  
of ``reducing elements'', and define the semi-index as the number of  
coincidence points remaining after deleting a maximal set of reducing  
pairs.

Those coincidence points which appear in reducing pairs are precisely  
the points of $\C_d(f,g,V)$ together with pairs of points of  
$\C_n(f,g,V)$ having opposite indices (see Theorem 5.3 of  
\cite{gj97}). Thus the reducing pairs consist of pairs of  
nondegenerate points whose index sum is zero, plus some even number of  
degenerate coincidence points. The value of the semi-index, then, is  
the (absolute value of the) index sum of the nondegenerate points,  
plus one if the number of degenerate points is odd. This (in absolute  
value) gives precisely \eqref{psum} when $\Coin(f,g,U)$ is a  
coincidence class.

\end{document}